\newcommand*{\figfilename}{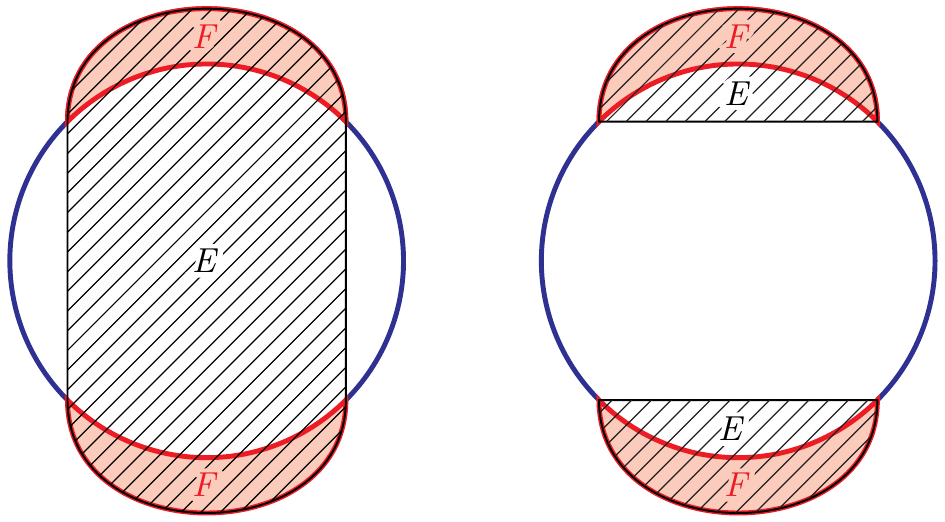}
\newcommand*{\Ccal}{C} 
\newcommand*{\Hcal}{{\mathcal H}}
\newcommand*{\C}{{\mathbb C}}
\newcommand*{\R}{{\mathbb R}}
\newcommand*{\Leb}[1]{{\mathscr L}^{#1}}
\newcommand*{\N}{{\mathbb N}}
\newcommand*{\loc}{{\mathrm{loc}}}
\newcommand*{\eps}{\varepsilon}
\newcommand*{\BV}{\mathrm{BV}}
\newcommand*{\Tr}{T}
\newcommand*{\ssub}{\sqsubset}
\newcommand*{\Om}{\Omega}
\newcommand*{\dOm}{\partial\Omega}
\newcommand*{\symdiff}{\bigtriangleup}
\providecommand*{\coloneq}{:=}
\providecommand*{\eqcolon}{=:}
\DeclareMathOperator{\Lip}{Lip}
\DeclareMathOperator{\LIP}{LIP}
\DeclareMathOperator{\Ext}{Ext}
\DeclareMathOperator{\Div}{div}
\DeclareMathOperator{\dist}{dist}
\DeclareMathOperator{\diam}{diam}
\DeclareMathOperator{\supt}{supt}
\numberwithin{equation}{section}
\theoremstyle{plain}
\newtheorem{thm}[equation]{Theorem}
\newtheorem{prop}[equation]{Proposition}
\newtheorem{cor}[equation]{Corollary}
\newtheorem{lem}[equation]{Lemma}
\theoremstyle{definition}
\newtheorem*{quest}{Question}
\newtheorem{defn}[equation]{Definition}
\newtheorem{remark}[equation]{Remark}
\newtheorem{exa}[equation]{Example}
\begin{document}
\title{Domains in metric measure spaces with boundary of positive mean curvature, and the Dirichlet problem for functions of least gradient}
\author{Panu Lahti\and Luk\'a\v{s} Mal\'y\and Nageswari Shanmugalingam\and Gareth Speight%
\thanks{The authors thank Estibalitz Durand-Cartagena, Marie Snipes and
Manuel Ritor\'e for fruitful discussions about the subject of the paper.
The research of N.S.\@ was partially supported by the NSF grant~\#DMS-1500440 (U.S.).
The research of P.L.\@ was supported by the Finnish Cultural Foundation.
The research of L.M.\@ was supported by the Knut and Alice Wallenberg Foundation (Sweden).
Part of this research was conducted during the visit 
of N.S.\@ and P.L.\@ to Link\"oping University. The authors wish to thank this institution for its kind hospitality.}}
\maketitle
\begin{abstract}
We study the geometry of domains in complete metric measure spaces equipped with a doubling measure 
supporting a $1$-Poincar\'e inequality. We propose a notion of \emph{domain with boundary of positive 
mean curvature} and prove that, for such domains, there is always a solution to the Dirichlet problem for 
least gradients with continuous boundary data. Here \emph{least gradient} is defined as minimizing total 
variation (in the sense of BV functions) and boundary conditions are satisfied in the sense that the 
\emph{boundary trace} of the solution exists and agrees with the given boundary data. This 
extends the result of Sternberg, Williams and Ziemer~\cite{SWZ} to the non-smooth setting. Via counterexamples 
we also show that uniqueness of solutions and existence of \emph{continuous} solutions can fail, even 
in the weighted Euclidean setting with Lipschitz weights.  
\end{abstract}
\section{Introduction}
The work of Giusti~\cite{Gi} showed a close connection between the curvature of the boundary of a Euclidean domain $\Om\subset\R^n$ and the existence of a solution to the Dirichlet problem related to the Plateau problem
\[
\Div(\nabla u(1+|\nabla u|^2)^{−1/2})=H(x,u(x))
 \quad\text{ for }\Leb{n}\text{-a.e.\@ } x\in \Om,
\]
with the graph of $u$ a minimal surface (under the constraint that it has prescribed mean curvature) in $\R^{n+1}$. 
The work of Barozzi and Massari~\cite{BM} studied a related obstacle problem for BV energy minimizers,
where the obstacle is required to have a certain curvature condition analogous to that of~\cite{SWZ}. While the
conditions in~\cite{SWZ} only considered domains whose boundary is of non-negative (or positive) mean
curvature, the paper~\cite{BM} imposed a more general mean curvature condition on the obstacle $M$, namely, that
if $g\in L^1_{\loc}(\Om)$ and $M\subset \Om$, then $M$ is of mean curvature at most $g$ if 
\[
 P(M, \Om')\le P(F,\Om')+\int_{M\setminus F}g
\]
whenever $\Om'\Subset\Om$ and $F\symdiff M\Subset\Om'$. The notion of non-negative
mean curvature of $\partial\Om$ (for $\Om$ whose boundary need not be smooth), as given in~\cite{SWZ} is
not quite this condition, but is similar.
Following this, the work
of~\cite{SWZ} showed that the Dirichlet problem related to the least gradient problem
\[
\text{div}\frac{\nabla u}{|\nabla u|}=0\text{ in }\Om, \quad T u=f\text{ on }\dOm,
\]
has a solution if and only if $\dOm$ has non-negative mean curvature (with respect to the
domain $\Om$) and $\dOm$ is nowhere locally area-minimizing. Here $Tu$ is the
trace of $u$ to $\dOm$ (see the next section for its definition and discussion regarding
its existence). More general notions of Dirichlet problem such as minimizing the
energy integral
\[
 I(u)=\| Du\|(\overline{\Om})
\]
and the energy functional
\begin{equation}\label{eq:p-to-1-Jfunct}
 J(u)=\| Du\|(\Om)+\int_{\dOm}|Tu-f|\, d\Hcal
\end{equation}
over all BV functions $u$ on $\R^n$ (with $u=f$ on $\R^n\setminus\Om$ for the
energy $I$) were studied for
more general Euclidean domains, for example, in~\cite{BDaM}, see also the discussions
in~\cite{Gi2, AG, SWZ, MRS}. Should we obtain a BV energy minimizer on $\Om$ with the
correct trace $f$ on $\dOm$, then this solution also minimizes $I$ and $J$. Until
the work of Sternberg, Williams, and Ziemer~\cite{SWZ}, not much consideration was
given to how the trace of the minimizers fit in with the boundary data.

The recent development of first order analysis in the metric setting (see~\cite{HKST})
led to the extension of the theory of BV functions and functions of least gradient in
the metric setting, see~\cite{Mr, A, AdM, AMP, KKLS, HKLS} for a sample. The
papers~\cite{HKLS, KLLS} studied minimizers of the energy functionals $I$ and $J$
in the metric setting. The goal of the present paper is to study existence of the
strongest possible solutions to the Dirichlet problem in the metric setting, namely
that the solution obtains the correct prescribed trace value on the boundary of the domain
of interest.

In addition to Euclidean domains as mentioned above, curvature conditions for the boundary of the domain
also show up in the Heisenberg setting.
Extending some of the results regarding the Plateau problem from the
Euclidean space (see for example~\cite{GS, Gi2}) to the Heisenberg setting,
the recent paper~\cite{PSTV} studied a related minimization problem in the
Heisenberg setting, and there too it seems curvature of the boundary plays a role. More specifically,
in~\cite{PSTV} it is shown that if $\Om\subset\R^{2n}$ is a bounded Lipschitz domain and
$\varphi$ is a Lipschitz function on $\dOm$ that is affine on the parts of the boundary where the domain is not
positively curved, then there is a function $u$ on $\Om$ such that the subgraph of $u$ in $\R^{2n}\times\R$,
equipped with the Heisenberg metric, is of minimal boundary surface with the trace of $u$ on $\dOm$ equal to
$\varphi$, and furthermore, $u$ is Lipschitz continuous on $\Om$. The work of~\cite{PSTV} therefore
is also concerned with the minimal graph problem rather than the least gradient problem. Any discussion
of the Dirichlet problem for least gradient functions on domains \emph{in} the Heisenberg group itself
should be governed by curvature of the boundary of the domain as well.

We propose an analog (Definition~\ref{posmeancur}) of the notion of positive mean curvature from the weak
formulation of~\cite{SWZ} to the metric setting where the measure is doubling and supports
a $1$-Poincar\'e inequality. 
The main theorem of this paper, Theorem~\ref{thm:main}, will demonstrate the existence
of such a strong solution to the least gradient problem for (globally) continuous BV boundary data provided the boundary
of the domain is of positive mean curvature in the sense considered here. We will also
show in the last section of this paper that outside of the Euclidean setting,
continuity (inside the domain) of the solution and uniqueness of the solution can
fail; indeed, the examples we provide can easily be modified to be a domain
in a Riemannian manifold. We point out that our definition of positive mean curvature of the boundary
is somewhat different from that of~\cite{SWZ}, see Remark~\ref{rem:curv-diff-defn} below.

The focus of~\cite{SWZ} was Lipschitz boundary data; for such data, the authors prove that the
solutions obtained are also Lipschitz (up to the boundary). The examples we provide here
show that even with Lipschitz boundary data, Lipschitz continuity of the solution is not guaranteed
in the general setting (not even in the Riemannian setting). Therefore, we broaden
our scope to the wider class of all globally continuous BV functions as boundary data.
We also show that if $\Om$ satisfies some additional conditions, then
it suffices to know that the boundary datum $f$ is merely a continuous function on
the boundary $\dOm$, see Section~\ref{sec:cont-data}.

The primary tool developed
in the present paper, ``stacking pancakes with minimal boundary surface'',
uses the idea that superlevel sets of functions of least gradient are of minimal boundary
surface (in the sense of~\cite{KKLS}). In the Euclidean setting, this was first proven
by Bombieri, De Giorgi, and Giusti in~\cite{BGG}, and was used in that spirit
in the work~\cite{SWZ}, which inspired our work presented here. In the
metric setting this minimality of the layers, or superlevel sets, was proven
in~\cite{HKLS}. While this method of ``stacking pancakes'' is similar to the one in~\cite{SWZ},
the tools available to us in our setting are very limited. In particular, we do not have the
smoothness properties and tangent cones for boundaries of sets of minimal boundary surfaces,
 and hence the construction of ``pancakes'' (superlevel sets) given in~\cite{SWZ} is not permitted
 to us. Furthermore, in the Euclidean setting, it is shown in~\cite{SWZ} that if two sets $E_1,E_2$ of minimal
 boundary surface such that $E_1\subset E_2$ have intersecting boundaries, then the two boundaries coincide
 in a relatively open set, and hence it holds that $E_1=E_2$. This property is used to show that the function
 constructed from the ``stack of pancakes'' is necessarily continuous, and thus issues of measurability of the
 constructed function does not arise in the Euclidean setting of~\cite{SWZ}.
 In the metric setting this property fails (see the
 examples constructed in the final section of this paper). Consequently we had to modify our construction of
 the solution function from the ``stack of pancakes'' by considering a countable sub-stack of pancakes.
\section{Notations and definitions in metric setting}
\label{sec:metric-setting}
We will assume throughout the paper that $(X,d,\mu)$ is a complete metric
space endowed with a doubling 
measure $\mu$ that satisfies a $1$-Poincar\'e inequality defined below.
We say that the measure $\mu$ is \emph{doubling} on $X$
if there is a constant $C_D\ge 1$ such that
\[
0<\mu(B(x,2r))\le C_D\, \mu(B(x,r))<\infty
\]
whenever $x\in X$ and $r>0$. Here $B(x,r)$ denotes the open ball with center $x$ and radius $r$.
Given measurable sets $E,F \subset X$, the symbol $E\ssub F$ will denote that 
$\mu(E\setminus F)=0$ or, in other words, $\chi_E \le \chi_F$ $\mu$-a.e.

A complete metric space with a doubling measure is proper,
that is, closed and bounded sets are compact.
Since $X$ is proper, given an open set $\Om\subset X$
we define $L^1_{\loc}(\Om)$ to be the space of
functions that are in $L^1(\Om')$ for every $\Omega'\Subset \Om$, that is,
when the closure of $\Om'$ is a compact subset of $\Om$.  
Other local spaces of functions are defined analogously.

Given a function $u:X\to\overline{\R}$, we say that a non-negative Borel-measurable function $g$ is an \emph{upper gradient} of $u$ if
\begin{equation}\label{eq:upperGr}
|u(y)-u(x)|\le \int_\gamma g\, ds
\end{equation}
whenever $\gamma$ is a non-constant compact rectifiable curve in $X$. The endpoints of $\gamma$ are denoted by
$x$ and $y$ in the above inequality. The inequality should be interpreted to mean that $\int_\gamma g\, ds=\infty$
if at least one of $u(x)$, $u(y)$ is not finite.

We say that $X$ supports a {\em $1$-Poincar\'e inequality} if there are positive constants $C_P$, $\lambda$
such that
\[
 \fint_B|u-u_B|\, d\mu\le C_P\, r\, \fint_{\lambda B}g\, d\mu
\]
whenever $B=B(x,r)$ is a ball in $X$ and $g$ is an upper gradient of $u$.
Here, $u_B\coloneq\mu(B)^{-1}\int_B u\, d\mu\eqcolon \fint_B u\, d\mu$ is the average of $u$ on the ball $B$, and $\lambda B\coloneq B(x,\lambda r)$.

Throughout this paper $C$ will denote a constant whose precise value is not of interest here and depends solely on $C_D$, $C_P$, $\lambda$, and perhaps on the domain $\Om$. As $C$ stands
for such a 
generic constant, its value could differ at each occurrence.

Let $\widetilde{N}^{1,1}(X)$ be the
class of all $L^1$ functions on $X$ for which there
exists an upper gradient in $L^1(X)$. For
$u\in\widetilde{N}^{1,1}(X)$ we define
$$
\|u\|_{\widetilde{N}^{1,1}(X)}=\|u\|_{L^1(X)}+\inf_{g}\|g\|_{L^1(X)},
$$
where the infimum is taken over all upper gradients $g$ of
$u$. Now, we define an equivalence
relation in $\widetilde{N}^{1,1}(X)$ by $u\sim v$ if and only if
$\|u-v\|_{\widetilde{N}^{1,1}(X)}=0$.

The \emph{Newtonian space} $N^{1,1}(X)$
is defined as the quotient $\widetilde{N}^{1,1}(X)/\sim$ and
it is equipped with the norm
$\|u\|_{N^{1,1}(X)}=\|u\|_{\widetilde{N}^{1,1}(X)}.$ One can define analogously $N^{1,1}(\Omega)$ for an open set $\Omega\subset X$. For more on upper gradients and Newtonian spaces
of functions on metric measure spaces, see~\cite{HKST}.

For $u\in L^1_{\loc}(X)$ the {\em total variation} of $u$ is defined by
\[
\|Du\|(X) = \inf\left\{\liminf_{i\to\infty}\int_X g_{u_i}\,d\mu:N^{1,1}_{\loc}(X)\ni u_i\to u\,\text{in }L^1_{\loc}(X)\right\},
\]
where $g_{u_{i}}$ are upper gradients of $u_{i}$.

One can define analogously $\|Du\|(\Omega)$ for an open set $\Omega\subset X$.
If $A\subset X$ is an arbitrary set we define
\[
\|Du\|(A)=\inf\{\|Du\|(\Omega): \Omega\supset A, \Omega\subset X \,\text{open}\}.
\]
A function $u\in L^1(X)$ is in $\BV(X)$ ({\em of bounded variation}) if $\|Du\|(X)<\infty$.
For such $u$, $\| Du\|$ is a Radon measure on $X$, see
\cite[Theorem 3.4]{Mr}.
A $\mu$-measurable set $E\subset X$ is of {\em finite perimeter} if $\|D\chi_E\|(X)<\infty$. The {\em perimeter} of $E$ in $\Omega$ is
\[
P(E,\Omega)\coloneq \|D\chi_E\|(\Omega).
\]
BV energy on open sets is lower semicontinuous with respect to $L^1$-convergence, i.e., if $u_k \to u$ in $L^1_\loc(\Om)$ as $k\to\infty$, where $\Om \subset X$ is open, then
\begin{equation}
  \label{eq:BV-lsc}
  \|Du\|(\Om) \le \liminf_{k\to\infty} \|Du_k\|(\Om)\,.
\end{equation}
The coarea formula in the metric setting~\cite[Proposition~4.2]{Mr} says
that if $u\in L^1_{\loc}(\Om)$ for an open set $\Om$, then
\begin{equation}
  \label{eq:coarea-formula}
\|Du\|(\Om)=\int_{-\infty}^{\infty} P(\{u>t\},\Om)\,dt,
\end{equation}
If $\| Du\|(\Om)<\infty$, the above holds with $\Om$ replaced by any Borel set
$A\subset \Om$.

Given a set $E\subset X$, its {\em Hausdorff measure of codimension 1} is defined by
\[
\Hcal(E)=\lim_{r\to 0}\
\inf\left\{\sum_{i=1}^{\infty}\frac{\mu(B(x_i,r_i))}{r_i}:E\subset\bigcup_{i=1}^{\infty} B(x_i,r_i),
r_i\le r\right\}.
\]
 It is known from~\cite[Theorem~5.3]{A} 
and~\cite[Theorem~4.6]{AMP} that if $E\subset X$ is of finite
 perimeter, then for Borel sets $A\subset X$,
 \[
 \frac{1}{C}\Hcal(A\cap\partial_mE) \le P(E,A) \le C\Hcal(A\cap\partial_mE),
 \]
 where
 $\partial_mE$ is the measure-theoretic boundary of $E$, that is, the collection
 of all points $x\in X$ for which simultaneously
 \[
 \limsup_{r\to 0^+}\frac{\mu(B(x,r)\cap E)}{\mu(B(x,r))}>0
 \quad
 \text{and}
 \quad
 \limsup_{r\to 0^+}\frac{\mu(B(x,r)\setminus E)}{\mu(B(x,r))}>0.
 \]

 Given a bounded domain $\Omega\subset X$ and a function $u\in \BV(\Omega)$, we say that
 $u$ has a trace at a point $z\in\partial\Omega$ if there is a number $\Tr u(z)\in\R$
 such that
\begin{equation}
  \label{eq:trace-def}
  \lim_{r\to 0^+}
    \fint_{B(z,r)\cap\Omega}|u(x)-\Tr u(z)|\, d\mu(x) = 0.
 \end{equation}
We know from~\cite[Theorem 3.4, Theorem 5.5]{LS} and
\cite{MSS} that if $\Omega$  satisfies all of the following geometric
conditions, then every function in $\BV(\Omega)$ has a trace $\mathcal H$-a.e. on $\partial\Omega$:
\begin{enumerate}
\item \label{enum:conditions for trace} there is a constant $C\ge 1$ such that
\[
  \mu(B(z,r)\cap\Omega) \ge \frac{\mu(B(z,r))}{C}
\]
whenever $z\in\overline{\Omega}$ and $0<r<2 \diam(\Omega)$;
\item there is a constant $C\ge 1$ such that
\[
 \frac{1}{C} \frac{\mu(B(z,r))}{r} \le \Hcal(B(z,r)\cap\partial\Omega) \le C\, \frac{\mu(B(z,r))}{r}
\]
whenever $z\in\partial\Omega$ and $0<r<2 \diam(\Omega)$;
\item  $\Omega$ supports a $1$-Poincar\'e inequality.
\end{enumerate}
Furthermore, if $\Omega$ satisfies all the above conditions, then the trace class of
$\BV(\Omega)$ is $L^1(\partial\Omega,\Hcal)$.
\begin{defn}\label{def: least Grad}
Let $\Om\subset X$ be an open set, and let $u\in \BV_{\loc}(\Om)$. We say that $u$ is of \emph{least gradient in $\Om$} if
\[
\| Du\|(V)\le \| Dv\|(V)
\]
whenever $v\in \BV(\Om)$ with $\overline{\{x\in\Om\, :\, u(x)\ne v(x)\}}\subset V\Subset\Om$.
A set $E \subset \Om$ is of \emph{minimal boundary surface in $\Om$}, if $\chi_E$ is of least gradient in $\Om$.
\end{defn}
\begin{defn}\label{def:weak-strong-solutions}
 Let $\Omega$ be a nonempty bounded domain in $X$ with $\mu(X\setminus\Omega)>0$, and let
 $f\in \BV_{\loc}(X)$. We say that $u\in \BV_{\loc}(X)$ is a
\emph{weak solution to the Dirichlet problem for least gradients in $\Om$ with
boundary data $f$}, or simply, \emph{weak solution to the Dirichlet problem with boundary
data $f$}, if $u=f$ on $X\setminus\Om$ and
\[
\| Du\|(\overline{\Om})\le \| Dv\|(\overline{\Om})
\]
whenever $v\in \BV(X)$ with $v=f$ on $X\setminus\Om$.
\end{defn}
\begin{defn}\label{def:strong-solutions}
Let $\Omega$ be a nonempty domain in $X$ and $f:\partial\Omega\to\R$. 
 We say that a 
 function $u\in \BV(\Omega)$ is a \emph{solution to the
 Dirichlet problem for least gradients in $\Om$ with boundary data $f$}, 
or simply, \emph{solution to the
 Dirichlet problem with boundary data $f$,} if $\Tr u=f$ $\mathcal H$-a.e.
 on $\partial\Omega$ and whenever
 $v\in \BV(\Omega)$ with $\Tr v=f$ $\mathcal H$-a.e. on $\partial\Omega$ we must have
 \[
 \| Du\|(\Omega)\le \| Dv\|(\Omega).
 \]
 \end{defn}

Note that solutions and weak solutions to Dirichlet problems on a domain $\Om$ are
necessarily of least gradient in $\Om$.

Given a function $u$ on $X$ and $x\in X$, we define
\[
 u^\vee(x) = \mathop{\textup{ap-$\limsup_{y\to x}$}} u(y) \coloneq
   \inf\biggl\{ t\in\R\colon \lim_{r\to 0^+}\frac{\mu(B(x,r)\cap\{u>t\})}{\mu(B(x,r))}=0\biggr\},
\]
and
\[
 u^\wedge(x) = \mathop{\textup{ap-$\liminf_{y\to x}$}} u(y) \coloneq
   \sup\biggl\{ t\in\R\colon \lim_{r\to 0^+}\frac{\mu(B(x,r)\cap\{u<t\})}{\mu(B(x,r))}=0\biggr\}.
\]
Then, $u^\vee(x)=u^\wedge(x)$ for $\mu$-a.e.\@ $x\in X$ by the Lebesgue differentiation theorem provided that $u\in L^1_\loc(X)$.

Points $x$
for which $u^\vee(x)=u^\wedge(x)$ are said to be \emph{points of approximate continuity} of $u$.
Let $S_{u}$ be the set of points $x$ at which $u$ is not approximately continuous.
For $u\in \BV(X)$, the set $S_u$ is of $\sigma$-finite codimension 1 Hausdorff measure,
see~\cite[Proposition 5.2]{AMP}.
If in addition $u=\chi_E$ for some $E\subset X$, then $S_u=\partial_m E$.
By~\cite[Theorem 5.3]{AMP}, the Radon measure $\| Du\|$ associated with a
function $u\in \BV(X)$ permits the
following decomposition:
\begin{equation}\label{eq:variation measure decomposition}
d\| Du\|=g\, d\mu+d\| D^ju\|+d\| D^cu\|,
\end{equation}
where $g\, d\mu$ with $g\in L^1(X)$ gives the part of $\| Du\|$ that
is absolutely continuous with
respect to the underlying measure $\mu$ on $X$, and $\| D^ju\|$ is the
so-called jump-part of $u$. This latter measure lives inside $S_u$, and is absolutely
continuous with respect to $\Hcal\lfloor_{S_u}$.
The third measure, $\| D^cu\|$,
is called the Cantor part of $\| Du\|$,
and does not charge sets of $\sigma$-finite codimension 1 Hausdorff measure.
In the literature, the set $S_u$ is called the \emph{jump set} of $u$, see~\cite{A, AdM, AMP}.

It was shown in~\cite{HKLS} that functions of least gradient, after a modification on a set of measure zero,
are continuous everywhere outside their jump sets. 
\section{Preliminary results related to weak solutions}
\label{sec:prelresults}
Throughout the rest of this paper, we will assume that $X$ is a complete metric space equipped with
a doubling measure $\mu$ supporting a $1$-Poincar\'e inequality, and $\Om\subset X$ is a
nonempty bounded domain such that
$\mu(X\setminus\Om)>0$.

We will need the next lemma for functions of the form $f=\chi_F$ for sets $F\subset X$
of finite perimeter.
\begin{lem} \label{lem:existence-weak-sol}
For every $f\in \BV_\loc(X)$ such that $\Vert Df\Vert(X)<\infty$
there is a function $u_f\in\BV_\loc(X)$ that is a weak solution to the Dirichlet problem
in $\Om$ with boundary data $f$.
\end{lem}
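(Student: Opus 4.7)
The plan is to invoke the direct method of the calculus of variations. First I would set up the class of admissible competitors
\[
\mathcal{A}_f := \{v \in \BV(X) : v = f \text{ $\mu$-a.e.\@ on } X \setminus \Om\}.
\]
If $\mathcal{A}_f$ is empty (which can happen only if $f \notin L^1(X)$) then setting $u_f := f$ satisfies the definition vacuously. Otherwise $f \in \mathcal{A}_f$ itself, so the infimum $m := \inf_{v \in \mathcal{A}_f} \|Dv\|(\overline{\Om})$ is finite, bounded above by $\|Df\|(\overline{\Om})$. I would then take a minimizing sequence $(u_k) \subset \mathcal{A}_f$ with $\|Du_k\|(\overline{\Om}) \to m$.

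The key technical step is compactness. Fix a ball $B$ large enough that $\overline{\Om} \subset \tfrac{1}{\lambda} B$ and $\mu(B \setminus \overline{\Om}) > 0$. Since $u_k - f$ vanishes on $B \setminus \Om$, the average $(u_k - f)_B$ is controlled by the oscillation of $u_k - f$ on $B$, and the $\BV$ analog of the $1$-Poincar\'e inequality applied to $u_k - f$ gives
\[
\|u_k - f\|_{L^1(B)} \le C\, r\, \bigl(\|Du_k\|(\overline{\Om}) + \|Df\|(\lambda B)\bigr),
\]
with $r$ the radius of $\lambda B$; this is uniformly bounded in $k$. Consequently $(u_k)$ is uniformly bounded in $\BV(B)$, and the Rellich--Kondrachov-type compactness for $\BV$ on doubling spaces with $1$-Poincar\'e lets me extract a subsequence converging in $L^1_\loc(X)$ to some $u\in L^1_\loc(X)$. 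Since $u_k = f$ on $X \setminus \Om$, the limit agrees with $f$ a.e.\@ there; after redefinition on a null set we have $u = f$ on $X \setminus \Om$.

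Finally, for any open $U \supset \overline{\Om}$, lower semicontinuity~\eqref{eq:BV-lsc} gives $\|Du\|(U) \le \liminf_k \|Du_k\|(U)$. Since $u_k = f$ on the open set $X\setminus\overline{\Om}$, one has $\|Du_k\|(U) = \|Du_k\|(\overline{\Om}) + \|Df\|(U \setminus \overline{\Om})$, and similarly for $u$. Subtracting the common piece and taking the infimum over $U \supset \overline{\Om}$ yields $\|Du\|(\overline{\Om}) \le m$, so $u$ is a weak solution. The finiteness $\|Du\|(\overline{\Om}) < \infty$, combined with $u = f \in \BV_\loc$ on $X \setminus \overline{\Om}$, ensures $u \in \BV_\loc(X)$.

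The step I expect to be the main obstacle is the uniform $L^1$ bound on the minimizing sequence on a neighborhood of $\overline{\Om}$, because a priori only the total variation on $\overline{\Om}$ is controlled. The trick that unlocks it is to pass to $u_k - f$, which vanishes on the set $B\setminus\Om$ of positive measure, so that the Poincar\'e inequality controls the mean of $u_k - f$ over $B$ and thereby its $L^1$ norm. Everything else is a standard application of $\BV$-compactness and lower semicontinuity in metric measure spaces.
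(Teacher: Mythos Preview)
Your proposal is correct and follows essentially the same approach as the paper: the direct method with a minimizing sequence, the key trick of applying the $1$-Poincar\'e inequality to $u_k-f$ (which vanishes on a set of positive measure in a ball containing $\overline{\Om}$) to obtain the uniform $L^1$ bound, then $\BV$-compactness and lower semicontinuity. The only cosmetic differences are that the paper takes competitors in $\BV_\loc(X)$ rather than $\BV(X)$ (so $f$ is automatically admissible and the empty-class detour is unnecessary), and it applies lower semicontinuity directly on the single ball $B$ rather than on a shrinking family of open neighborhoods $U\supset\overline{\Om}$.
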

\begin{proof}
Let 
\[
I \coloneq \inf\{\| Du\|(\overline{\Om})\, :\, u\in \BV_\loc(X)\text{ and }u=f\text{ on }X\setminus\Om\}. 
\]
Observe that $0 \le I \le \| Df\|(\overline{\Om})<\infty$. Let $\{u_k\}_{k=1}^\infty$ be a sequence of functions
in $\BV_\loc(X)$ with $u_k=f$ on $X\setminus\Om$ such that $\| Du_k\|(\overline{\Om})\to I$ as $k\to\infty$.
Let $B \subset X$ be an open ball that contains $\overline\Om$. In particular, we can choose $B$ so that $\mu(B \setminus\Om)>0$.
Hence, 
the $1$-Poincar\'e inequality yields that
\begin{align*}
\int_B |u_k-f|\, d\mu\le C_{B,\Om} \| D(u_k-f)\|(\overline{\Om})
  &\le C_{B,\Om}\bigl(\| Du_k\|(\overline{\Om})+\| Df\|(\overline{\Om})\bigr)\\
   &\le 3C_{B,\Om} \| Df\|(\overline{\Om})
\end{align*}
for sufficiently large $k$. Note that the above holds true without subtracting $(u_k-f)_B$ on
the left-hand side because
$u_k-f=0$ on $B\setminus\Om$, while $\mu(B\setminus\Om)$ is positive, see for example~\cite[Lemma~2.2]{KKLS}.
Thus, the sequence $\{u_k-f\}_{k=1}^\infty$ is bounded in $\BV(B)$, and hence so is $\{u_k\}_{k=1}^\infty$. By the 
$1$-Poincar\'e inequality and the doubling property of $\mu$, the space $\BV(B)$ is compactly embedded in $L^{q}(B)$ for 
some $q>1$, see for example~\cite{HaKo, HKST} and~\cite[Theorem 3.7]{Mr}. Therefore,
there is a subsequence, also denoted $u_k$, that converges
in $L^q(B)$ and pointwise $\mu$-a.e.~in $B$ to a function $u_0\in \BV(B)$. By the fact that each $u_k=f$ on
$X\setminus\Om$, we have that $u_0=f$ on $B\setminus\Om$, and that the extension of $u_0$ by $f$ to $X\setminus B$
yields a function in $\BV_\loc(X)$. We denote this extended function by $u_f$. 

Finally, note by the lower semicontinuity of BV energy that
\[
\| Du_f\|(\overline{\Om})+\| Df\|(B\setminus\overline{\Om})=
\| Du_f\|(B)\le \liminf_{k\to\infty}\| Du_k\|(B)=I+\| Df\|(B\setminus\overline{\Om}),
\]
that is, $\| Du_f\|(\overline{\Om})\le I$. Since $u_f=f$ on $X\setminus\Om$ and $u_f\in\BV_\loc(X)$, it follows that
$\| Du_f\|(\overline{\Om})=I$. This completes the proof of the lemma.
\end{proof}

In the following lemma, we will see that the Dirichlet problem in $\Om$ with boundary data $\chi_F$ for some
set $F\subset X$ of finite perimeter has a weak solution given as a function $\chi_E$ for some set
$E \subset \Om \cup F$. Such a set $E$ will be called a \emph{weak solution set}.
\begin{lem}\label{lem:soln-vs-minimalSet}
Let $F\subset X$ with $P(F,X)<\infty$.
Then, there is a set $E\subset X$ with $P(E,X)<\infty$ such that
$\chi_E$ is a weak solution to the Dirichlet problem in $\Om$ with boundary data $\chi_F$.

Moreover, if $u_{\chi_F}$ is a weak solution to the Dirichlet problem with boundary data $\chi_F$, then we can pick any 
$t_0\in (0,1]$ and choose $E$ to be the set
\[
 E_{t_0}=\{x\in X \colon  u_{\chi_F}(x) \ge t_0\}.
\]
\end{lem}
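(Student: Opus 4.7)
The plan is to build $E$ as a superlevel set of a weak solution to the Dirichlet problem with boundary data $\chi_F$, with the coarea formula~\eqref{eq:coarea-formula} as the main engine. First I would invoke Lemma~\ref{lem:existence-weak-sol} (noting $\|D\chi_F\|(X)=P(F,X)<\infty$) to produce some weak solution $u_{\chi_F}\in\BV_\loc(X)$. Replacing $u_{\chi_F}$ by $\min(\max(u_{\chi_F},0),1)$ preserves the boundary data $\chi_F$ on $X\setminus\Om$, does not increase total variation on $\overline\Om$ (so the truncated function is still a weak solution), and leaves the set $\{u_{\chi_F}\ge t_0\}$ unchanged for every $t_0\in(0,1]$; so I may assume throughout that $u_{\chi_F}$ takes values in $[0,1]$.

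Write $I\coloneq\|Du_{\chi_F}\|(\overline\Om)$. For every $t\in(0,1]$ the set $E_t\coloneq\{u_{\chi_F}\ge t\}$ satisfies $\chi_{E_t}=\chi_F$ on $X\setminus\Om$, since $u_{\chi_F}=\chi_F\in\{0,1\}$ there, so $\chi_{E_t}$ is an admissible competitor and $P(E_t,\overline\Om)\ge I$. Choose an open ball $B\supset\overline\Om$; because $\chi_{E_t}=\chi_F$ on the open set $B\setminus\overline\Om$, the decomposition of perimeter as a Radon measure yields $P(E_t,B)=P(E_t,\overline\Om)+P(F,B\setminus\overline\Om)$, and likewise $\|Du_{\chi_F}\|(B)=I+P(F,B\setminus\overline\Om)$. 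Since $\{u_{\chi_F}>t\}$ and $E_t$ agree up to a countable set of $t$, the coarea formula applied to $u_{\chi_F}$ on $B$ gives
\[
I+P(F,B\setminus\overline\Om) = \|Du_{\chi_F}\|(B) = \int_0^1 P(E_t,B)\,dt = \int_0^1 P(E_t,\overline\Om)\,dt + P(F,B\setminus\overline\Om).
\]
Combined with $P(E_t,\overline\Om)\ge I$, this forces $P(E_t,\overline\Om)=I$ for almost every $t\in(0,1)$, so $\chi_{E_t}$ is itself a weak solution for a.e.\@ such $t$.

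To promote this to \emph{every} $t_0\in(0,1]$ I would pick a sequence $t_n\nearrow t_0$ with $t_n\in(0,1)$ and $P(E_{t_n},\overline\Om)=I$. The sets $E_t$ are monotone decreasing in $t$ with $\bigcap_n E_{t_n}=E_{t_0}$, so dominated convergence gives $\chi_{E_{t_n}}\to\chi_{E_{t_0}}$ in $L^1(B)$. Lower semicontinuity of BV energy on the open ball $B$, see~\eqref{eq:BV-lsc}, then yields
\[
P(E_{t_0},B)\le\liminf_{n\to\infty} P(E_{t_n},B) = I+P(F,B\setminus\overline\Om);
\]
subtracting the open-set portion $P(F,B\setminus\overline\Om)$ gives $P(E_{t_0},\overline\Om)\le I$, which together with the competitor bound produces equality. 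Hence $E\coloneq E_{t_0}$ is the desired weak solution set; finiteness $P(E,X)<\infty$ follows because $E$ equals $F$ outside the bounded ball $B$.

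The step I expect to require the most care is the third one: upgrading the a.e.\@ conclusion from coarea to every $t_0\in(0,1]$. Because lower semicontinuity of perimeter is naturally stated on open sets while the competitor inequality is phrased on the closure $\overline\Om$, the argument has to be routed through the larger open ball $B$, and the ``outside'' perimeter $P(F,B\setminus\overline\Om)$ must be tracked carefully so that it cancels on both sides; this bookkeeping is what makes the pointwise upgrade possible.
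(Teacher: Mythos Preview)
Your proof is correct and follows essentially the same route as the paper's: produce a weak solution, reduce to values in $[0,1]$, use the coarea formula together with the competitor inequality $P(E_t,\overline\Om)\ge I$ to get equality for a.e.\ $t$, then upgrade to every $t_0\in(0,1]$ by monotone approximation $t_n\nearrow t_0$ and lower semicontinuity. The only notable differences are cosmetic: the paper invokes the maximum principle from~\cite[Theorem~5.1]{HKLS} to obtain $0\le u_{\chi_F}\le 1$, whereas you truncate (which is more self-contained and, as you observe, leaves the superlevel sets $E_{t_0}$ unchanged); and the paper applies the coarea formula directly on the Borel set $\overline\Om$ and does the lower-semicontinuity step on all of $X$, whereas you route both through an open ball $B\supset\overline\Om$ and cancel the exterior contribution $P(F,B\setminus\overline\Om)$.
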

\begin{proof}
By Lemma~\ref{lem:existence-weak-sol}, there is a weak solution $u_{\chi_F}$.
Note that $0\le u_{\chi_F}\le 1$ on $X$ by the maximum principle proven
in~\cite[Theorem 5.1]{HKLS}.

For $t\in (0,1]$, let
\[
  E_t= \{x\in X\, \colon\,  u_{\chi_F}(x)\ge t\}.
\]
We will first show that $\chi_{E_t}$ is a weak solution to the Dirichlet
problem in $\Om$ with boundary data $\chi_F$ for all $t\in (0,1] \setminus N$ for some negligible set $N$. 
We will prove that $N = \emptyset$ later.

The coarea formula \eqref{eq:coarea-formula},
together with the fact that $P(F,X)<\infty$, gives that
\[
\int_0^1P(E_t,X)\, dt = \int_{-\infty}^\infty P(E_t,X)\, dt = \| Du_{\chi_F}\|(X)\le P(F,X)< \infty,
\]
whence $P(E_{t},X)<\infty$ for $\Leb1$-a.e.\@ $t \in (0,1]$. Moreover, $\chi_{E_t}=\chi_F$ on $X\setminus\Om$ 
for every $t\in(0,1]$. Since $u_{\chi_F}$ is a weak solution corresponding to the boundary data $\chi_F$, we have
$\| Du_{\chi_F}\|(\overline{\Om})\le P(E_t,\overline{\Om})$ for every $t\in(0,1]$.

Let $N = \{t \in (0,1] \colon \|Du_{\chi_F}\|(\overline{\Om}) < P(E_t,\overline{\Om})\}$. Then
by the coarea formula,
\begin{align*}
  \|Du_{\chi_F}\|(\overline{\Om}) = \int_0^1 P(E_t,\overline{\Om})\, dt & = \int_{(0,1] \setminus N} \|Du_{\chi_F}\|(\overline{\Om})\,dt + \int_N P(E_t,\overline{\Om})\, dt \\
  & = (1-\Leb1(N)) \|Du_{\chi_F}\|(\overline{\Om}) + \int_N P(E_t,\overline{\Om})\, dt.
\end{align*}
Hence, $\Leb1(N) \|Du_{\chi_F}\|(\overline{\Om}) = \int_N P(E_t,\overline{\Om})\, dt$, which can hold true only if $\Leb1(N)=0$.

We have shown that $\chi_{E_t} = \chi_F$ on $X\setminus \Om$ and $P(E_t,\overline{\Om}) = \| Du_{\chi_F}\|(\overline{\Om})$
 for every $t\in (0 ,1]\setminus N$, where $\Leb1(N)=0$. Therefore,
for every $t\in (0,1]\setminus N$,
the function $\chi_{E_t}$ is a weak solution with boundary data $\chi_F$, and we may choose $E$ to be the set $E_{t}$.

Let us now show that $N$ is in fact empty. Indeed, taking an arbitrary $t\in (0, 1]$ and a sequence $t_k\in(0,1]\setminus N$
such that $t_k\nearrow t$, we obtain that $E_t = \bigcap_k E_{t_k}$ and hence $|\chi_{E_{t_k}}-\chi_{E_t}|\to 0$ in $L^1(X)$ as $k\to\infty$.
The lower semicontinuity of the $\BV$ energy with respect to the $L^1$-convergence yields that
\begin{align*}
  P(E_{t}, \overline{\Om}) + P(F, X \setminus \overline{\Om}) & = P(E_{t}, X) \\
  & \le \liminf_{k\to\infty} P(E_{t_k}, X) = \liminf_{k\to\infty} P(E_{t_k}, \overline{\Om}) + P(F, X \setminus \overline{\Om}).
\end{align*}
Hence, $P(E_t, \overline{\Om}) \le \|Du_{\chi_F}\|(\overline{\Om})$. In other words, $t\notin N$.
\end{proof}
\begin{lem}\label{lem:comparison-principle-wk-sol}
Let $F_1 \subset F_2 \subset X$ be sets of finite perimeter in $X$. Suppose that
$E_1, E_2 \subset X$ are chosen such that $\chi_{E_1}$ and $\chi_{E_2}$ are weak solutions to the Dirichlet problem in $\Om$
with boundary data $\chi_{F_1}$ and $\chi_{F_2}$, respectively. Then, $\chi_{E_1 \cap E_2}$ is a weak 
solution corresponding to $\chi_{F_1}$, while $\chi_{E_1 \cup E_2}$ is a weak solution corresponding to $\chi_{F_2}$.
\end{lem}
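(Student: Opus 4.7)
The plan is to combine two standard ingredients: the submodularity of perimeter and the minimizing property of the given weak solutions.

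First I would check that $\chi_{E_1\cap E_2}$ and $\chi_{E_1\cup E_2}$ are admissible competitors. Since $\chi_{E_i}=\chi_{F_i}$ on $X\setminus\Om$ and $F_1\subset F_2$, on $X\setminus\Om$ we have
\[
 \chi_{E_1\cap E_2}=\chi_{F_1}\chi_{F_2}=\chi_{F_1\cap F_2}=\chi_{F_1},
 \qquad
 \chi_{E_1\cup E_2}=\chi_{F_1\cup F_2}=\chi_{F_2}.
\]
Finite perimeter of $E_1\cap E_2$ and $E_1\cup E_2$ will drop out of the submodularity inequality itself, so these sets are legitimate competitors for the Dirichlet problems corresponding to $\chi_{F_1}$ and $\chi_{F_2}$, respectively.

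Next I would establish the submodularity
\[
 P(E_1\cap E_2,\overline{\Om})+P(E_1\cup E_2,\overline{\Om})\le P(E_1,\overline{\Om})+P(E_2,\overline{\Om}).
\]
Applying the coarea formula~\eqref{eq:coarea-formula} to $v\coloneq\chi_{E_1}+\chi_{E_2}\in L^1_\loc(X)$, whose superlevel sets are $\{v>t\}=E_1\cup E_2$ for $t\in[0,1)$ and $\{v>t\}=E_1\cap E_2$ for $t\in[1,2)$ (and trivial otherwise), gives $\|Dv\|(A)=P(E_1\cap E_2,A)+P(E_1\cup E_2,A)$ for any Borel set $A$ provided $\|Dv\|$ is finite on a neighborhood. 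The subadditivity of the total variation measure yields $\|Dv\|\le \|D\chi_{E_1}\|+\|D\chi_{E_2}\|$ as Radon measures, and evaluating both sides on the closed set $\overline{\Om}$ gives the desired inequality.

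Finally, combining submodularity with the weak-solution inequalities
\[
 P(E_1,\overline{\Om})\le P(E_1\cap E_2,\overline{\Om})
 \quad\text{and}\quad
 P(E_2,\overline{\Om})\le P(E_1\cup E_2,\overline{\Om})
\]
(each of which holds because the corresponding set is a competitor for the respective weak problem) forces equality throughout. In particular $P(E_1\cap E_2,\overline{\Om})=P(E_1,\overline{\Om})$ and $P(E_1\cup E_2,\overline{\Om})=P(E_2,\overline{\Om})$, so both $\chi_{E_1\cap E_2}$ and $\chi_{E_1\cup E_2}$ attain the infimum and are therefore themselves weak solutions. The only delicate point I anticipate is justifying the submodular inequality on the \emph{closed} set $\overline{\Om}$ rather than on an open neighborhood; this is handled by working with $\|Dv\|$ as a Radon measure and using that $P(F_i,X)<\infty$ (so $\|Dv\|(X)<\infty$) together with the Borel-set version of the coarea formula stated after~\eqref{eq:coarea-formula}.
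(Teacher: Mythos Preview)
Your proof is correct and follows essentially the same route as the paper: both arguments combine the submodularity inequality $P(E_1\cap E_2,\overline{\Om})+P(E_1\cup E_2,\overline{\Om})\le P(E_1,\overline{\Om})+P(E_2,\overline{\Om})$ with the competitor property of $E_1\cap E_2$ and $E_1\cup E_2$. The only cosmetic differences are that the paper cites the submodularity from~\cite[Proposition~4.7(3)]{Mr} rather than deriving it via coarea, and reaches the conclusion by a short contradiction argument rather than by summing the two competitor inequalities directly as you do; one small point to tighten is that your claim ``$P(F_i,X)<\infty$ so $\|Dv\|(X)<\infty$'' really uses $P(E_i,X)<\infty$, which follows since $\chi_{E_i}=\chi_{F_i}$ off $\Om$ and $\|D\chi_{E_i}\|(\overline{\Om})\le\|D\chi_{F_i}\|(\overline{\Om})<\infty$.
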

\begin{proof}
From~\cite[Proposition~4.7(3)]{Mr}, together with the fact that the perimeter
measure is a Borel regular outer measure, we know that
\begin{equation}
  \label{eq:P-cap-cup}
P(E_1\cap E_2,\overline{\Om})+P(E_1\cup E_2,\overline{\Om})
 \le P(E_1,\overline{\Om})+P(E_2,\overline{\Om}).
\end{equation}
If $P(E_1\cap E_2,\overline{\Om})>P(E_1,\overline{\Om})$, then we would have
$P(E_1\cup E_2,\overline{\Om})<P(E_2,\overline{\Om})$. However, this would violate the minimality of 
$P(E_2,\overline{\Om})$ among all BV functions that equal $\chi_{F_2}$ outside $\Om$ since 
$(E_1\cup E_2)\setminus\Om = (F_1\cup F_2)\setminus\Om = F_2\setminus\Om$.
Hence, $P(E_1\cap E_2,\overline{\Om})\le P(E_1,\overline{\Om})$. Furthermore, 
${(E_1\cap E_2)\setminus\Om} = {(F_1\cap F_2)\setminus\Om} = {F_1\setminus\Om}$ and hence 
$\chi_{E_1 \cap E_2}$ is a weak solution to the Dirichlet problem with boundary data $\chi_{F_1}$.

By a similar argument, we can rule out the inequality $P(E_1\cup E_2,\overline{\Om})>P(E_2,\overline{\Om})$ 
as it would violate the fact that $\chi_{E_1}$ is a weak solution for the boundary data $\chi_{F_1}$. 
Therefore, $P(E_1\cup E_2,\overline{\Om})\le P(E_2,\overline{\Om})$ and we conclude that
$\chi_{E_1 \cup E_2}$ is a weak solution to the Dirichlet problem with boundary data $\chi_{F_2}$.
\end{proof}
\begin{remark}
If $F_1 \subset F_2$ are as in Lemma~\ref{lem:comparison-principle-wk-sol} and if $u_{\chi_{F_1}}$ and 
$u_{\chi_{F_2}}$ are weak solutions to the Dirichlet problem with boundary data $\chi_{F_1}$ and $\chi_{F_2}$, 
respectively, then one can use the coarea formula to prove that $\min\{u_{\chi_{F_1}}, u_{\chi_{F_2}}\}$ and 
$\max\{u_{\chi_{F_1}}, u_{\chi_{F_2}}\}$ are weak solutions corresponding to boundary data $\chi_{F_1}$ 
and $\chi_{F_2}$, respectively.
\end{remark}
\begin{defn}
A (weak) solution $\chi_{E}$ to the Dirichlet problem with boundary data $\chi_{F}$ is called a 
\emph{minimal (weak) solution} to the said problem if
every (weak) solution $\chi_{\widetilde{E}}$ corresponding to the data $\chi_{F}$ satisfies $E \ssub \widetilde{E}$, that is, $\mu(E\setminus \widetilde{E})=0$, or alternatively,
$\chi_E \le \chi_{\widetilde{E}}$ $\mu$-a.e.\@ in $X$.
\begin{figure}[ht]%
\center
\includegraphics[width=96mm,height=53mm,page=1]{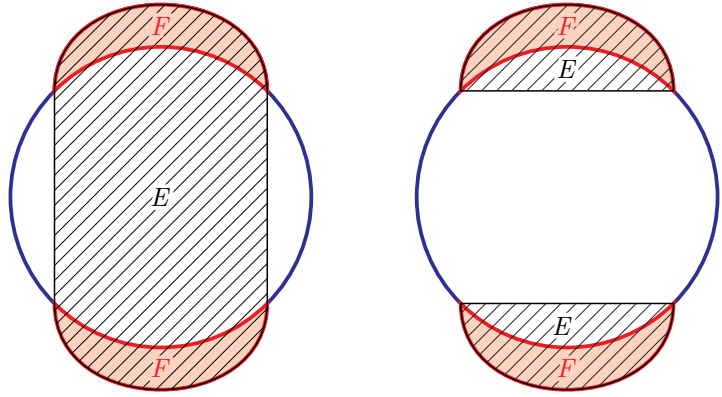}%
\caption{Two weak solutions $\chi_E$ to the Dirichlet problem in $\Om$ with boundary data $\chi_F$, where $F$ is 
the (closed) region filled with light red color. The figure on the right shows the minimal weak solution. Each of the arcs of 
$\dOm \cap \partial F$ and $\dOm \setminus \partial F$ covers the angle of $\pi/2$. Note also that the restriction 
$\chi_{E}\bigr|_\Om$ is a solution\,/\,the minimal solution.}%
\label{fig:min-wk-solution}%
\end{figure}
\end{defn}
\begin{remark}
It follows from Lemma~\ref{lem:soln-vs-minimalSet} that if $u_{\chi_F}$ is a weak solution and $\chi_E$ is the minimal 
weak solution to the Dirichlet problem with boundary data $\chi_F$, then $u_{\chi_F} \ge 1$ a.e.\@ on $E$.
\end{remark}
\begin{prop}
\label{pro:minSolutionSet-exists}
Let $F\subset X$ be a set of finite perimeter in $X$. Then, there exists a unique minimal weak solution $\chi_E$ to the Dirichlet problem 
in $\Om$ with boundary data $\chi_{F}$.
\end{prop}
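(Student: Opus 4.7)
The plan is to obtain the minimal weak solution as a countable decreasing intersection of weak solution sets. Let $\mathcal{E}$ denote the (nonempty, by Lemma~\ref{lem:soln-vs-minimalSet}) family of all $E\subset X$ such that $\chi_E$ is a weak solution to the Dirichlet problem in $\Om$ with boundary data $\chi_F$, and write $I \coloneq \|Du_{\chi_F}\|(\overline{\Om})$; by definition, $P(E,\overline{\Om})=I$ for every $E\in\mathcal{E}$. Since $\Om$ is bounded, the infimum $m\coloneq\inf\{\mu(E\cap\Om):E\in\mathcal{E}\}$ is finite. Pick a minimizing sequence $E_k\in\mathcal{E}$ with $\mu(E_k\cap\Om)\searrow m$.

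The first key step is to invoke Lemma~\ref{lem:comparison-principle-wk-sol} with $F_1=F_2=F$, which tells us that $\mathcal{E}$ is closed under binary (hence finite) intersections. Replacing $E_k$ by $E_1\cap\cdots\cap E_k$, we may assume $E_1\supset E_2\supset\cdots$, so that $\chi_{E_k}\to\chi_E$ pointwise $\mu$-a.e.\@ and in $L^1_\loc(X)$, where $E\coloneq\bigcap_k E_k$. Note that $\chi_E=\chi_F$ $\mu$-a.e.\@ on $X\setminus\Om$ since each $E_k$ has this property.

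The second step is to verify $E\in\mathcal{E}$. For any open $U\supset\overline\Om$, lower semicontinuity of the total variation on open sets yields
\[
P(E,U)\le\liminf_{k\to\infty}P(E_k,U)=\liminf_{k\to\infty}\bigl(P(E_k,\overline{\Om})+P(F,U\setminus\overline{\Om})\bigr)=I+P(F,U\setminus\overline{\Om}).
\]
Taking the infimum over open $U\supset\overline\Om$ (using that $\|D\chi_F\|$ is a Radon measure, so $P(F,U\setminus\overline{\Om})\to 0$ as $U$ shrinks to $\overline\Om$) gives $P(E,\overline{\Om})\le I$. The reverse inequality is automatic, so $E\in\mathcal{E}$ and $\mu(E\cap\Om)=m$. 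This lower semicontinuity over the closed set $\overline\Om$ is the main technical step, but it is exactly the type of argument already employed at the end of the proof of Lemma~\ref{lem:soln-vs-minimalSet}.

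Finally, to establish minimality, let $\widetilde E\in\mathcal{E}$ be arbitrary. By Lemma~\ref{lem:comparison-principle-wk-sol}, $E\cap\widetilde E\in\mathcal{E}$, so $\mu((E\cap\widetilde E)\cap\Om)\ge m = \mu(E\cap\Om)$; combined with the trivial reverse inequality, this forces $\mu((E\setminus\widetilde E)\cap\Om)=0$. Since $E\setminus\Om=F\setminus\Om=\widetilde E\setminus\Om$ up to $\mu$-negligible sets, we conclude $E\ssub\widetilde E$. Uniqueness of the minimal weak solution follows immediately, because any two minimal weak solutions $\chi_E,\chi_{E'}$ must satisfy $E\ssub E'$ and $E'\ssub E$ simultaneously.
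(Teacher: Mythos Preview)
Your proof is correct and follows essentially the same approach as the paper: both minimize $\mu(E\cap\Om)$ over weak solution sets, pass to a nested minimizing sequence via Lemma~\ref{lem:comparison-principle-wk-sol}, and use lower semicontinuity to show the intersection is again a weak solution. The only cosmetic difference is that the paper handles the lower semicontinuity step by working on all of $X$ and subtracting $P(F,X\setminus\overline{\Om})$, whereas you shrink open neighborhoods $U\supset\overline{\Om}$; both arguments are equivalent.
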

Here, by uniqueness we mean that two minimal weak solutions agree $\mu$-almost everywhere in $X$.
\begin{proof}
Let $\alpha = \inf_E \mu(E\cap \Om)$, where the infimum is taken over all sets $E$ such that
$\chi_E$ is a weak solution to the Dirichlet problem. Note that there is at least one such weak solution by 
Lemma~\ref{lem:soln-vs-minimalSet}.
Moreover,
$\alpha < \infty$ since $\mu(\Om)<\infty$.

Let $\{E_k\}_{k=1}^\infty$ be a sequence of sets such that $\chi_{E_k}$ solves the Dirichlet problem and 
$\mu(E_k\cap\Om) \to \alpha$ as $k\to \infty$.
Let $\widetilde{E}_1 = E_1$ and $\widetilde{E}_{k+1} = E_{k+1} \cap \widetilde{E}_k$, $k=1,2,\ldots$.
By Lemma~\ref{lem:comparison-principle-wk-sol}, each of the sets $\widetilde{E}_{k}$ gives a weak solution with 
the same boundary data $\chi_F$. Moreover, $\widetilde{E}_{k+1} \subset \widetilde{E}_{k}$ for all $k=1,2,\ldots$ and 
$\mu(\widetilde{E}_k\cap\Om) \to \alpha$.

Let $E = \bigcap_k \widetilde{E}_k$. Then, 
$E\setminus \Om = \bigcap_k (\widetilde{E}_k \setminus \Om) = F \setminus \Om$. 
As $X \setminus \overline{\Om}$ is open and $\chi_E = \chi_F = \chi_{\widetilde{E}_k}$ 
in $X \setminus \overline{\Om}$, we have
\[
  P(E, X \setminus \overline{\Om}) = P(F, X \setminus \overline{\Om}) 
  = P(\widetilde{E}_k, X \setminus \overline{\Om}) \quad\text{for every }k=1,2,\ldots.
\]
Since $|\chi_{\widetilde{E}_k} - \chi_E|\to 0$ in 
$L^1(X)$, the lower semicontinuity of the BV energy \eqref{eq:BV-lsc} yields that
$P(E,X)<\infty$ and then also
\begin{align*}
  P(E, \overline{\Om}) & = P(E, X) - P(F, X \setminus \overline{\Om})
   \le \liminf_{k\to\infty} P(\widetilde{E}_k, X) - P(F, X \setminus \overline{\Om}) \\
   & = \liminf_{k\to\infty} P(\widetilde{E}_k, \overline{\Om})  = \inf\{\|Du\|(\overline{\Om}) \colon u  = \chi_F\text{ in } X \setminus \Om\}.
\end{align*}
Thus, $\chi_E$ is a weak solution to the Dirichlet problem.  If $E'$ is another weak solution, then,
by Lemma~\ref{lem:comparison-principle-wk-sol},
so is $E\cap E'$, and hence $\alpha\le \mu(E\cap E'\cap\Om)\le \mu(E\cap\Om)=\alpha$. Therefore, $\mu(E\setminus E')=0$, that is, $E$ is a minimal weak solution.
The uniqueness now follows
from the above argument, which yields that $\mu(E \symdiff E')=0$ whenever $E'$
is another minimal weak solution.
\end{proof}
\begin{lem}
\label{lem:nesting-minsol}
Let $F_1 \ssub F_2\subset X$ be  
sets of finite perimeter in $X$. Then, the minimal weak solutions $\chi_{E_1}$ and $\chi_{E_2}$ to 
the Dirichlet problem in $\Om$ with boundary data $\chi_{F_1}$ and $\chi_{F_2}$, respectively, satisfy 
$E_1\ssub E_2$.
\end{lem}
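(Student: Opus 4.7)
The plan is to combine Lemma~\ref{lem:comparison-principle-wk-sol} with the minimality of $\chi_{E_1}$ in a direct way. Since $F_1 \ssub F_2$ means $\mu(F_1\setminus F_2)=0$, and modifying $F_1$ on a $\mu$-null set does not alter the Dirichlet problem (the boundary data enter only up to $\mu$-a.e.\@ equivalence via the condition $u = \chi_{F_i}$ on $X\setminus\Om$), I would first replace $F_1$ by $F_1\cap F_2$, which is genuinely contained in $F_2$ as a set. This puts us in the hypotheses of Lemma~\ref{lem:comparison-principle-wk-sol}.

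Next, I would apply Lemma~\ref{lem:comparison-principle-wk-sol} to the pair of weak solutions $\chi_{E_1}$ and $\chi_{E_2}$, whose existence and minimality come from Proposition~\ref{pro:minSolutionSet-exists}. The lemma then yields that $\chi_{E_1\cap E_2}$ is itself a weak solution to the Dirichlet problem in $\Om$ with boundary data $\chi_{F_1}$ (since $(F_1\cap F_2)\setminus\Om = F_1\setminus\Om$ after the modification). This is the single substantive step.

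Finally, since $\chi_{E_1}$ is \emph{the} minimal weak solution corresponding to boundary data $\chi_{F_1}$, applying the definition of minimality to the competitor $E_1\cap E_2$ gives
\[
 \mu\bigl(E_1\setminus(E_1\cap E_2)\bigr)=0,
\]
which is exactly $\mu(E_1\setminus E_2)=0$, that is, $E_1\ssub E_2$, as required.

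The argument is short and essentially routine once Lemma~\ref{lem:comparison-principle-wk-sol} and Proposition~\ref{pro:minSolutionSet-exists} are in hand; the only mild care required is the bookkeeping that allows one to pass from the hypothesis $F_1\ssub F_2$ (equality of characteristic functions up to null sets) to the strict set-inclusion needed to invoke Lemma~\ref{lem:comparison-principle-wk-sol}. I do not expect any genuine obstacle here. One could alternatively try to use the $\cup$-half of Lemma~\ref{lem:comparison-principle-wk-sol} together with minimality of $E_2$, but that route is vacuous since $E_2\ssub E_1\cup E_2$ holds automatically; the intersection-plus-minimality-of-$E_1$ route is the one that actually delivers the nesting.
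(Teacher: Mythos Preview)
Your proof is correct and essentially identical to the paper's: both reduce to genuine inclusion by a null-set modification (you replace $F_1$ by $F_1\cap F_2$, the paper replaces $F_2$ by $F_1\cup F_2$), then invoke Lemma~\ref{lem:comparison-principle-wk-sol} to see that $E_1\cap E_2$ is a weak solution for $\chi_{F_1}$, and conclude via minimality of $E_1$.
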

\begin{proof}
By replacing $F_2$ with $F_2\cup F_1$ if necessary (and in doing so, we only modify $F_2$ on a set of measure zero),
we may assume that $F_1\subset F_2$.
Let $E_1$ and $E_2$ be as in the statement of the lemma. By 
Lemma~\ref{lem:comparison-principle-wk-sol}, $E_1 \cap E_2$ gives a weak solution 
to the Dirichlet problem with boundary data $\chi_{F_1}$.
Uniqueness of the minimal weak solutions implies that $\mu(E_1 \setminus E_2) = \mu(E_1 \setminus (E_1 \cap E_2)) = 0$.
\end{proof}
We will see in Proposition~\ref{pro:weak-to-strong-positiveCurve} that for domains 
with boundary of positive mean curvature, there is no need to distinguish between solutions and weak solutions for 
boundary data $\chi_F$. Hence, in such domains, there exists a unique minimal solution, and furthermore,
the minimal solutions exhibit the same nesting property for
nested boundary data as in Lemma~\ref{lem:nesting-minsol}.

It is, in fact, also possible to define a \emph{maximal (weak) solution} $\chi_E$ to the Dirichlet problem in $\Om$ 
with boundary data $\chi_F$ by requiring that $\mu(\widetilde{E}\setminus E) = 0$ for every other (weak) solution 
$\chi_{\widetilde E}$ of the said problem. For instance, the set $E$ on the left in Figure~\ref{fig:min-wk-solution} 
gives the maximal (weak) solution.
\section{Domains in metric spaces with boundary of positive mean curvature}
In this section we propose a notion of positive mean curvature of the boundary of a domain
$\Om$ in the metric measure space $X$, and study the Dirichlet problem for such domains.
As explained in the introduction, solutions to Dirichlet problem in the sense of
Definition~\ref{def:strong-solutions} might not
always exist.
Given an open set $F\subset X$ that intersects $\partial\Omega$, let $u_{\chi_F}$ denote a generic
weak solution to the 
Dirichlet problem with boundary data $\chi_F$. It is not necessarily true that
$\Tr u_{\chi_F}=\chi_F$ $\Hcal$-a.e.~on $\dOm$. 
The classic example is that of a square.
If $\Omega=[0,1]\times[0,1]\subset\R^2$, and if $F$ is the disk centered at $(1/2,0)$ with radius
$1/10$, then there is no function $u$ of least gradient in $\Omega$ with trace $\chi_F$ on
$\partial\Omega$. Notice that the boundary of the square does not have positive mean curvature.

In the definition of positive mean curvature below (Definition~\ref{posmeancur}), we tacitly require that
for each $z\in\partial\Omega$ and \emph{almost all} $0<r<r_0$, the
function $u_{\chi_{B(z,r)}}$ exists. This is not an onerous assumption, as seen from Lemma~\ref{lem:existence-weak-sol}
and the fact that given $x\in X$, the ball $B(x,r)$ has finite perimeter in $X$ for almost every $r>0$.
This latter fact follows from the coarea formula~\eqref{eq:coarea-formula}.

The main question addressed in this paper is the following.
\begin{quest}
\emph{If $\Omega$ has a boundary $\partial\Omega$ with positive mean curvature
(in the sense of Definition~\ref{posmeancur} below), is it
true that for every Lipschitz function $f:\partial\Omega\to\R$, there exists an extension of least gradient
$u:\Omega\to\R$ such that $f$ is the trace of $u$, that is,
\[
\lim_{r\to 0}\fint_{B(z,r)\cap\Omega}|u-f(z)|d\mu=0
\]
for $\Hcal$-almost every $z\in \partial\Omega$?
In other words, does there exist a solution to the Dirichlet
problem in the sense of Definition~\ref{def:strong-solutions} with boundary data $f$?
If such solutions exist, can we guarantee that they will be continuous and unique?}
\end{quest}
We will show that indeed solutions do exist, and by counterexamples we give a negative
answer to the continuity and uniqueness questions.

The hypothesis of positive mean curvature of the boundary seems appropriate in view of the 
results of~\cite{SWZ} in the Euclidean setting, where existence, continuity and uniqueness 
of solutions was proven for bounded Lipschitz domains $\Omega\subset \R^n$ provided that:
\begin{enumerate}
  \renewcommand{\labelenumi}{(\arabic{enumi})}
  \setlength{\parskip}{0.5pt plus 0.5pt minus 0.5pt}
  \setlength{\itemsep}{0.5pt plus 1pt minus 0.5pt}
	\item $\partial\Omega$ has non-negative mean curvature (in a weak sense),
  \item $\partial\Omega$ is not locally area-minimizing.
\end{enumerate}
Moreover, if $\partial\Omega$ is smooth, then these two conditions together are equivalent
to the fact that the mean curvature of $\partial\Omega$ is positive on a dense subset of $\partial\Omega$.

To talk about traces of solutions as referred to above, we need to know that such traces exist.
It is not difficult to construct Euclidean domains and BV functions on the Euclidean domains that fail to
have a trace on the boundary of the domain.
In the metric setting (which also includes the Euclidean setting), 
it was shown in~\cite{LS} that there exist traces of BV functions,
as defined in~\eqref{eq:trace-def}, on the boundary of
bounded domains satisfying the conditions listed on page 
\pageref{enum:conditions for trace} of the present paper.
In this paper, we \emph{do not} need
to know that \emph{every} BV function on the domain of interest has a trace on the boundary. 
We are only interested in knowing whether the weak solutions we construct have the
correct trace.
\begin{defn}\label{posmeancur}
Given a domain $\Omega\subset X$, we say that the boundary $\partial\Omega$
has {\em positive mean curvature} if there exists a
non-decreasing function $\varphi:(0,\infty)\to(0,\infty)$ and a constant $r_0>0$ such that for
all $z\in\partial\Omega$ and all $0<r<r_0$ with $P(B(z,r),X)<\infty$
we have that
$u_{\chi_{B(z,r)}}^\vee \geq 1$ everywhere on $B(z,\varphi(r))$.
Since the weak solution $u_{\chi_{B(z,r)}}$ need not
be unique, the above condition is required to hold
for \emph{all} such solutions.
\end{defn}
Note that the requirement on all weak solutions $u_{\chi_{B(z,r))}}$ in the definition above can be
equivalently expressed as the condition that $B(z, \varphi(r)) \ssub E_{B(z,r)}$, where $E_{B(z,r)}\subset X$
gives the~\emph{minimal weak solution} to the Dirichlet problem with boundary data
$\chi_{B(z, r)}$ as given by Proposition~\ref{pro:minSolutionSet-exists}.
\begin{remark}\label{rem:curv-diff-defn}
Our definition of $\partial\Om$ being of positive mean curvature is different from that of~\cite{SWZ}. In~\cite{SWZ},
it is required that
\begin{itemize}
\item for each $x\in\dOm$ there is some $\eps_0>0$ such that whenever $A\Subset B(x,\eps_0)$ with $P(A,\R^n)<\infty$,
we must have $P(\Om,\R^n)\le P(\Om\cup A,\R^n)$, and
\item for each $x\in\dOm$ there is some $\eps_1>0$ such that whenever $0<\eps<\eps_1$, there is some 
$A_\eps\Subset B(x,\eps)$ such that $P(A_\eps,\R^n)$ is finite and $P(\Om\setminus A_\eps,\R^n)< P(\Om,\R^n)$.
\end{itemize}
In the case of $\dOm$ being a smooth manifold, the two definitions coincide.
\end{remark}
\noindent%
\begin{minipage}{\linewidth}
\vspace{\topsep}
\center
\includegraphics[width=4.75in,page=2]{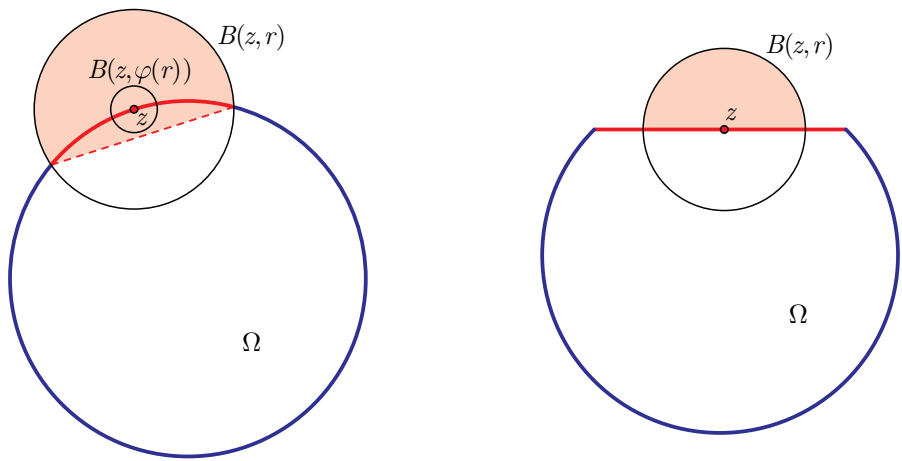}

\captionof{figure}{The (Euclidean) domain on the left has boundary of positive mean curvature, unlike the
domain on the right. The regions shaded light red in color are weak solution sets of the respective
Dirichlet problems.}\label{fig:circles}
\vspace{\topsep}
\end{minipage}

Euclidean balls of radii $R>0$ satisfy the above condition, with
$\varphi(r)=\tfrac{r^2}{2R}$, as can be seen via a
simple computation.
On the other hand, the square region
$\Om=(0,1)\times(0,1)\subset\R^2$ does not satisfy the criterion of positive mean curvature of the boundary. Indeed, for
$z=(1/2,0)$ and $0<r<1/2$, the weak solution
is $u_{\chi_{B(z,r)}} = \chi_{B(z,r)\setminus\Om}$. For the same reason the domain
obtained by removing a slice from the disk also does not satisfy the criterion for
positive mean curvature of the boundary, see Figure~\ref{fig:circles}.
\begin{exa}
Consider $\Om = B(0,1) \subset \R^2$ with the weighted measure $d\mu=w\, d\Leb2$. Define the following distance
\[
\hat{d}(x,y)=\inf_{\gamma}\int_{\gamma} w^{1/2}\,ds,
\]
where the infimum is taken over all the curves $\gamma$ connecting $x$ and $y$.

If $\Omega$ is the disk with the Euclidean metric and weighted measure, then the boundary will have
positive mean curvature in the sense of Riemannian geometry but might not be of positive mean curvature in our sense.

If we consider $(\Omega,\hat{d})$, then $\partial\Omega$ might not have positive mean curvature in the Riemannian geometric sense either. Indeed, it will fail to be of positive mean curvature if the weight function decreases rapidly towards the boundary of the disk.

If $\Om$ is the ``flattened disk'' as in Figure~\ref{fig:circles} and the weight function \emph{increases} rapidly towards the flattened part of the boundary of that domain, then even though this boundary is not
of positive mean curvature in the Riemannian sense, it \emph{would be} of positive mean curvature in our sense. Thus the notion of curvature is intimately connected with the interaction between the metric and the measure.
\end{exa}
\begin{exa}
Assume that $X$ is the unit sphere $\mathbb{S}^2$, equipped with the spherical metric and the $2$-dimensional Hausdorff measure. 
Let $x_0\in X$, and consider $\Om_R=B(x_0,R)$ for $0<R<\pi$. We show that $\Om_R$ has boundary of positive mean curvature (in our sense) precisely when $0<R<\pi/2$.

Let $z\in \dOm_R$ and $0<r<\diam \Om_R$. Then weak solutions $u_{\chi_{B(z,r)}}$ of the Dirichlet problem in $\Om_R$ with boundary data $\chi_{B(z,r)}$ have superlevel sets $E_{t}=\{x:u_{\chi_{B(z,r)}}(x)>t\}$ of minimal boundary surface. For any $0<t<1$, $\partial E_{t}$ consists of the shortest path in $\overline{\Om_R}$ which connects the two points in $\partial B(z,r) \cap \dOm_R$.

Suppose that $R<\pi/2$. Then the shortest path $\gamma$ in $\overline{\Om_R}$ connecting
the two components (points) of
$\partial B(z,r) \cap \dOm_R$ is part of a great circle. It is clear from the geometry that there exists a 
positive function $\varphi(r)$, independent of $z$, such that $B(z,\varphi(r))\cap \Om_{R}\cap \gamma=\emptyset$. Hence $E_{t}\supset B(z,\varphi(r))$ for any $0<t<1$, which implies $u_{\chi_{B(z,r)}}\geq 1$ on $B(z,\varphi(r))$. This shows that 
the boundary of $\Om_R$ has positive mean curvature.

If instead $R \ge \pi/2$, then the shortest path in $\overline{\Om_R}$ connecting
the two components of
$\partial B(z,r) \cap \dOm_R$ lies entirely in $\dOm_R$. Hence $E_{t}\cap \Om_R=\emptyset$ for any $0<t<1$. This implies that the weak solution $u_{\chi_{B(z,r)}}$ is exactly $\chi_{B(z,r) \setminus \Om_R}$. Hence there is no 
positive function $\varphi(r)$ as in
Definition~\ref{posmeancur}, so $\Om_{R}$ is not of positive mean curvature.

Observe that, for $R \ge \pi/2$, the weak solution $u = \chi_{B(z,r)\setminus \Om_R}$ is not a solution. Indeed, $\Tr u \equiv 0 \neq \chi_{B(z,r)}$ on $\dOm_{R}$. In fact, there is no solution for such a boundary data since $\inf\{\|Du\|(\Om_{R}) \colon \Tr u = \chi_{B(z,r)}\text{ on }\dOm_{R}\} = \Hcal( B(z,r) \cap \dOm_{R})$ is not attained by any function $u\in \BV(\Om_{R})$.
\end{exa}
To prove the main result of this paper, Theorem~\ref{thm:main}, we need the following tools.
\begin{lem}\label{lem:Not-Close-to-bdry}
Assume that $\dOm$ has positive mean curvature. Let
$F\subset X$ be a set of finite perimeter in $X$.
Suppose that $x\in\dOm$ and $0<r<r_0$ such that $B(x,r)\setminus \Om \subset F$
with $P(B(x,r),X)<\infty$. Assume that $u_{\chi_F}$ is a weak solution to the Dirichlet problem in $\Om$ 
with boundary data $\chi_F$. Then, $B(x,\varphi(r))\subset \{u_{\chi_F}^\vee\ge 1 \}$, where 
$\varphi: (0, \infty)\to(0,\infty)$ is the function of the condition of positive mean curvature from Definition~\ref{posmeancur}.
\end{lem}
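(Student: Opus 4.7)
The strategy is to recast the conclusion as an inclusion of the ball $B(x,\varphi(r))$ in the minimal weak solution set for $\chi_F$, and then to reach that minimal set by a nesting argument starting from the problem with data $\chi_{B(x,r)}$. The enabling trick is to enlarge $F$ \emph{inside} $\Om$ so that $B(x,r)$ is literally contained in the new datum, without affecting the Dirichlet problem.

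First I set $F_2 \coloneq F \cup B(x,r)$. Because $F$ and $B(x,r)$ both have finite perimeter in $X$, so does $F_2$. The hypothesis $B(x,r)\setminus\Om \subset F$ yields $F_2\setminus\Om = F\setminus\Om$, and hence $\chi_{F_2} = \chi_F$ on $X\setminus\Om$. Consequently, the Dirichlet problems with boundary data $\chi_F$ and $\chi_{F_2}$ impose exactly the same constraint on admissible competitors, so they have the same weak solutions; in particular, by Proposition~\ref{pro:minSolutionSet-exists} they share a common minimal weak solution set, which I denote $E$.

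Next, I apply Lemma~\ref{lem:nesting-minsol} to the pair $F_1 \coloneq B(x,r) \ssub F_2$. Writing $E_1$ for the minimal weak solution set for $\chi_{F_1}$, the lemma gives $E_1 \ssub E$. The positive mean curvature hypothesis, in the equivalent formulation recorded immediately after Definition~\ref{posmeancur}, yields $B(x,\varphi(r)) \ssub E_1$. Chaining the two inclusions, $B(x,\varphi(r)) \ssub E$. By the Remark following Proposition~\ref{pro:minSolutionSet-exists} (itself a consequence of Lemma~\ref{lem:soln-vs-minimalSet}), every weak solution to the problem with data $\chi_F$ satisfies $u_{\chi_F}\ge 1$ $\mu$-a.e.\ on $E$, and in particular $\mu$-a.e.\ on the open ball $B(x,\varphi(r))$.

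It remains to upgrade this $\mu$-a.e.\ inequality to the pointwise bound $u_{\chi_F}^\vee \ge 1$ on $B(x,\varphi(r))$. Given any $y\in B(x,\varphi(r))$, pick $\rho>0$ with $B(y,\rho)\subset B(x,\varphi(r))$; then $u_{\chi_F}\ge 1$ $\mu$-a.e.\ on $B(y,\rho)$, so for every $t<1$ one has $\mu(B(y,s)\cap\{u_{\chi_F}>t\}) = \mu(B(y,s))$ for all $s\in(0,\rho)$, and the density ratio fails to tend to zero. By the very definition of the approximate $\limsup$, this forces $u_{\chi_F}^\vee(y)\ge 1$. The only non-routine step is the opening observation that replacing $F$ by $F_2 = F\cup B(x,r)$ leaves the Dirichlet problem unchanged while creating the honest set-theoretic containment $B(x,r)\subset F_2$ required by the nesting lemma; after that, everything is a composition of results already established in the paper.
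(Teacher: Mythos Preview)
Your proof is correct. Both your argument and the paper's rest on the same core idea---pass from the ball datum $\chi_{B(x,r)}$ to the datum $\chi_F$ by a comparison/nesting step---but they execute it with different tools. The paper works directly with the given weak solution: by Lemma~\ref{lem:soln-vs-minimalSet} the superlevel set $E_1=\{u_{\chi_F}\ge 1\}$ is itself a weak solution set for $\chi_F$; intersecting it with any weak solution set $G$ for $\chi_{B(x,r)}$ and applying the comparison principle (Lemma~\ref{lem:comparison-principle-wk-sol}) shows $E_1\cap G$ is a weak solution for $\chi_{B(x,r)}$, whence positive mean curvature gives $B(x,\varphi(r))\ssub E_1\cap G\subset E_1$ directly. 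You instead route everything through \emph{minimal} weak solution sets and the nesting Lemma~\ref{lem:nesting-minsol}, then return to the given $u_{\chi_F}$ via the Remark after Proposition~\ref{pro:minSolutionSet-exists}. Your enlargement $F_2=F\cup B(x,r)$ is a clean device to obtain the literal containment $B(x,r)\subset F_2$ that Lemma~\ref{lem:nesting-minsol} asks for (the paper's appeal to Lemma~\ref{lem:comparison-principle-wk-sol} implicitly relies on the fact that only behaviour outside $\Om$ matters there). The paper's route is shorter because it avoids the detour through minimal sets and the closing a.e.-to-pointwise upgrade---it is already working with the set $E_1$ tied to $u_{\chi_F}$---but your version makes the chain of inclusions slightly more transparent.
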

\begin{proof}
By Lemma~\ref{lem:soln-vs-minimalSet}, there is a set $G \subset X$ of finite perimeter such that $\chi_G$ 
is a weak solution to the Dirichlet problem with boundary data $\chi_{B(x,r)}$. Furthermore, 
Lemma~\ref{lem:soln-vs-minimalSet} yields that 
$E_1 \coloneq \{x\in X  \colon u_{\chi_F} \ge 1\}$
is a weak solution set corresponding to boundary data $\chi_F$. 

By Lemma~\ref{lem:comparison-principle-wk-sol}, $\chi_{E_1 \cap G}$ is a weak solution corresponding 
to boundary data $\chi_{B(x,r)}$. 
Then, $B(x, \varphi(r)) \ssub E_1 \cap G$ by the definition of positive mean curvature. In particular, 
$B(x,\varphi(r)) \ssub E_1$. 
Therefore, $u_{\chi_F}^\vee \ge 1$ everywhere on $B(x,\varphi(r))$.
\end{proof}
Combining Lemma~\ref{lem:soln-vs-minimalSet} with the lemma above tells us that there is 
\emph{at least one weak solution set} to the Dirichlet problem with boundary data $\chi_F$ and that 
\emph{every weak solution set} to this boundary data contains the ball $B(x,\varphi(r))$ whenever 
$x\in \dOm$ and $0<r<r_0$ such that
$B(x,r)\setminus \Om\subset F$.
\begin{cor}\label{cor:correct-trace}
Suppose that $\dOm$ is of positive mean curvature, 
and let $F\subset X$ be open with $P(F,X)<\infty$ and $\Hcal(\dOm\cap \partial F)=0$. Suppose that 
$u_{\chi_F}$ is a weak solution to the Dirichlet problem with boundary data $\chi_F$. Then, 
$\Tr u_{\chi_{F}} = \chi_F$ $\Hcal$-a.e.~on $\dOm$.
\end{cor}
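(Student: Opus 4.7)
The plan is to split $\dOm$ into the three pieces $\dOm \cap F$, $\dOm \setminus \overline{F}$, and $\dOm \cap \partial F$, and use Lemma~\ref{lem:Not-Close-to-bdry} once directly on $u_{\chi_F}$ to handle the first and once on a ``dual'' weak solution to handle the second. The third piece contributes nothing since $\Hcal(\dOm\cap\partial F)=0$ by hypothesis. Throughout, I will use the maximum principle from~\cite{HKLS} (already invoked in Lemma~\ref{lem:soln-vs-minimalSet}) which guarantees $0\le u_{\chi_F}\le 1$ $\mu$-a.e.\@ on $X$.

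For $z\in\dOm\cap F$, openness of $F$ yields an $r>0$ with $\overline{B(z,r)}\subset F$, and the coarea formula~\eqref{eq:coarea-formula} lets us shrink $r$ (still below $r_0$) so that $P(B(z,r),X)<\infty$. Then $B(z,r)\setminus\Om\subset F$, so Lemma~\ref{lem:Not-Close-to-bdry} gives $u_{\chi_F}^\vee\ge 1$ on $B(z,\varphi(r))$, and combined with $u_{\chi_F}\le 1$ $\mu$-a.e., this forces $u_{\chi_F}=1$ $\mu$-a.e.\@ on $B(z,\varphi(r))$. Hence $\Tr u_{\chi_F}(z)=1=\chi_F(z)$ in the sense of~\eqref{eq:trace-def}.

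For $z\in\dOm\setminus\overline{F}$, the key step is to observe that $v\coloneq 1-u_{\chi_F}$ is itself a weak solution to the Dirichlet problem, now with boundary data $\chi_{X\setminus F}$: indeed $\|Dv\|=\|Du_{\chi_F}\|$ as Radon measures, $v=\chi_{X\setminus F}$ on $X\setminus\Om$, and any competitor $w$ for the data $\chi_{X\setminus F}$ turns $1-w$ into a competitor for $\chi_F$, so minimality transfers. The complement $X\setminus F$ is closed with $P(X\setminus F,X)=P(F,X)<\infty$. Pick $r>0$ (again with $P(B(z,r),X)<\infty$ via coarea) such that $\overline{B(z,r)}\cap F=\emptyset$; then $B(z,r)\setminus\Om\subset X\setminus F$, and Lemma~\ref{lem:Not-Close-to-bdry} applied to $v$ gives $v^\vee\ge 1$ on $B(z,\varphi(r))$, i.e., $u_{\chi_F}^\wedge\le 0$ there. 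Together with $u_{\chi_F}\ge 0$ $\mu$-a.e., this forces $u_{\chi_F}=0$ $\mu$-a.e.\@ on $B(z,\varphi(r))$, and hence $\Tr u_{\chi_F}(z)=0=\chi_F(z)$.

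The main obstacle is the duality step in the second case: one has to notice that Lemma~\ref{lem:Not-Close-to-bdry} is a one-sided statement (it only produces a lower bound $u^\vee\ge 1$) and therefore cannot detect that $u_{\chi_F}$ vanishes near $\dOm\setminus\overline{F}$ unless one reruns the argument on the complementary boundary datum. Everything else is routine: elementary openness of $F$ and $X\setminus\overline{F}$, selection of a radius $r$ with finite-perimeter ball via the coarea formula, and the observation that $\Hcal(\dOm\cap\partial F)=0$ exhausts all of $\dOm$ outside the two handled pieces.
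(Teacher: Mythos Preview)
Your proof is correct and follows essentially the same approach as the paper: split $\dOm$ into $\dOm\cap F$, $\dOm\setminus\overline{F}$, and $\dOm\cap\partial F$; apply Lemma~\ref{lem:Not-Close-to-bdry} directly for the first piece and to the dual weak solution $1-u_{\chi_F}$ (with boundary data $\chi_{X\setminus F}$) for the second; discard the third by the hypothesis $\Hcal(\dOm\cap\partial F)=0$. Your write-up is in fact slightly more detailed than the paper's, since you explicitly justify the duality step and invoke the coarea formula to secure $P(B(z,r),X)<\infty$.
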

\begin{proof}
By the maximum principle~\cite[Theorem 5.1]{HKLS}, we know that $0 \le u_{\chi_F} \le 1$. For every 
$x\in \dOm\cap F$, there is $r_x > 0$ such that $B(x,r_x) \setminus \Om \subset F\setminus \Om$. 
Thus, we can apply Lemma~\ref{lem:Not-Close-to-bdry} to find a ball $B(x, \varphi(r_x))$ such that 
$u_{\chi_F}^\vee \ge 1$ everywhere on $B(x, \varphi(r_x))$. 
Hence, $\Tr u_{\chi_F}(x) = 1$.

Note that $1-u_{\chi_F}$ is a weak solution to the Dirichlet problem with boundary data $\chi_{X\setminus F}$. 
Hence, for every $x \in \dOm \setminus \overline{F}$, Lemma~\ref{lem:Not-Close-to-bdry} provides us with a ball 
$B(x, \varphi(r_x))$ such that $1-u_{\chi_F}^\wedge \ge 1$, i.e., 
$u_{\chi_F}^\wedge \le 0$ everywhere on $B(x, \varphi(r_x))$. 
Hence, $\Tr u_{\chi_F}(x) = 0$.

Finally, even though we lack any control of $\Tr u_{\chi_F}$ on $\dOm\cap \partial F$, the proof is complete 
since we assumed that $\Hcal(\dOm\cap \partial F)=0$.
\end{proof}
\begin{lem}\label{lem:ext-of-trace-global}
Suppose that $\Hcal(\dOm)<\infty$.
Let $F\subset X$
be an open set such that $\Hcal(\dOm\cap\partial F)=0$ and $P(F,X)<\infty$.
If $v\in \BV(\Om)$ with
$0\le v\le 1$ and $\Tr v=\chi_F$ $\Hcal$-a.e.~in $\dOm$, 
then the extension of $v$ to $X\setminus \Om$ obtained by defining $v=\chi_F$ in
$X\setminus\Om$
lies in $\BV_{\loc}(X)$ and $\| Dv\|(\dOm)=0$.
\end{lem}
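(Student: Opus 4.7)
The strategy is to exploit the matching of traces from the two sides of $\dOm$. The hypothesis gives $\Tr v=\chi_F$ $\Hcal$-a.e.\ on $\dOm$, while $\Hcal(\dOm\cap\partial F)=0$ ensures that at $\Hcal$-a.e.\ $x\in\dOm$ the characteristic $\chi_F$ is locally constant near $x$ (identically $1$ when $x\in\dOm\cap F$ and identically $0$ when $x\in\dOm\setminus\overline{F}$). Thus the inside trace of $v$ agrees with the outside value of $\chi_F$ at $\Hcal$-a.e.\ $x\in\dOm$, so that gluing $v$ with $\chi_F$ should pick up no variation across $\dOm$.

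To establish that the extension (still denoted $v$) lies in $\BV_\loc(X)$, I would choose Newtonian approximations $(u_n)\subset N^{1,1}_\loc(\Om)$ with $0\le u_n\le 1$, $u_n\to v$ in $L^1_\loc(\Om)$, and $\int_\Om g_{u_n}\,d\mu\to\|Dv\|(\Om)$, together with $(w_n)\subset N^{1,1}(X)$ with $w_n\to\chi_F$ in $L^1(X)$ and $\int_X g_{w_n}\,d\mu\to P(F,X)$. The naive paste $u_n\chi_\Om+w_n\chi_{X\setminus\Om}$ need not be Newtonian across $\dOm$, so I would interpolate in a thin collar $N_\delta=\{x\in X\colon\dist(x,\dOm)<\delta\}$ using a Lipschitz partition of unity subordinate to $\{N_\delta,\,X\setminus\overline{N_{\delta/2}}\}$, then send $\delta\to 0$ along a diagonal subsequence. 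The matching-trace condition should keep the collar contribution to $\int g\,d\mu$ bounded in the limit, yielding a locally finite variation measure for the glued function.

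Granting $v\in\BV_\loc(X)$, the equality $\|Dv\|(\dOm)=0$ then follows from the decomposition~\eqref{eq:variation measure decomposition}. The absolutely continuous piece vanishes on $\dOm$ because $\Hcal(\dOm)<\infty$ forces $\mu(\dOm)=0$: for any $r>0$ one can cover $\dOm$ by balls $B(x_i,r_i)$ with $r_i\le r$ and $\sum_i\mu(B(x_i,r_i))/r_i\le 2\Hcal(\dOm)$, whence $\mu(\dOm)\le 2r\,\Hcal(\dOm)\to 0$. The Cantor piece does not charge $\dOm$ since $\Hcal(\dOm)<\infty$. For the jump piece, it suffices to show $\Hcal(\dOm\cap S_v)=0$: at $x\in\dOm\cap F$ there exists $r>0$ with $\chi_F\equiv 1$ on $B(x,r)\setminus\Om$, while the trace condition gives $\fint_{B(x,\rho)\cap\Om}|v-1|\,d\mu\to 0$, and combining these via a Chebyshev-type estimate yields $v^\wedge(x)=v^\vee(x)=1$; the symmetric argument at $x\in\dOm\setminus\overline{F}$ gives $v^\wedge(x)=v^\vee(x)=0$. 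Hence $\dOm\cap S_v\subset\dOm\cap\partial F$, which is $\Hcal$-null.

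The main obstacle I anticipate is the gluing step: the collar interpolation has to be quantitative enough that neither the $L^1$-approximation fails nor the upper-gradient integrals blow up, and this in turn requires uniform control on the boundary decay $\fint_{B(z,r)\cap\Om}|v-\Tr v(z)|\,d\mu\to 0$ across $z\in\dOm$.
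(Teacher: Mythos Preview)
Your argument for $\|Dv\|(\dOm)=0$ is correct and is exactly the paper's argument: decompose $\|Dv\|$ as in~\eqref{eq:variation measure decomposition}, kill the absolutely continuous and Cantor parts using $\Hcal(\dOm)<\infty$, and show $\dOm\cap S_v\subset\dOm\cap\partial F$ from the matching of interior trace and exterior value.

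The gap is in the gluing step, and you have correctly identified it yourself. The collar interpolation produces an upper gradient with a term of size $|u_n-w_n|\,|\nabla\eta|\sim\delta^{-1}|u_n-w_n|$ on $N_\delta$, so you would need
\[
\frac{1}{\delta}\int_{N_\delta\cap\Om}|u_n-w_n|\,d\mu
\]
to stay bounded as $\delta\to 0$ along some diagonal. But the Newtonian approximants $u_n$ are only known to converge to $v$ in $L^1_{\loc}(\Om)$; nothing pins down their behaviour near $\dOm$, and even for $v$ itself the pointwise trace condition $\fint_{B(z,r)\cap\Om}|v-\Tr v(z)|\,d\mu\to 0$ gives no \emph{uniform} rate in $z$ and no integrated estimate over the collar without extra structural hypotheses on $\Om$ (measure density, boundary Ahlfors regularity) that the lemma does not assume. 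So as stated the plan does not close.

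The paper avoids this difficulty entirely by never gluing approximants. Instead it works at the level of superlevel sets via the coarea formula: write $E_t=\{v>t\}$ and $E_t=(E_t\cap\Om)\cup(F\setminus\Om)$, so it suffices to bound $P(E_t\cap\Om,X)$. For each $\eps>0$ cover $\dOm$ by finitely many balls $B_i$ of radii $r_i<\eps$ with $\sum_i\mu(B_i)/r_i\le C\,\Hcal(\dOm)$ and $P(B_i,X)\le C\mu(B_i)/r_i$ (such radii exist by a coarea argument applied to $d(\cdot,z)$). Then $U_{\eps,t}:=(E_t\cap\Om)\setminus\bigcup_i B_i$ is compactly contained in $\Om$, so
\[
P(U_{\eps,t},X)=P(U_{\eps,t},\Om)\le P(E_t,\Om)+\sum_i P(B_i,X)\le P(E_t,\Om)+C\,\Hcal(\dOm),
\]
and $\chi_{U_{\eps,t}}\to\chi_{E_t\cap\Om}$ in $L^1(X)$ as $\eps\to 0$ because $\sum_i\mu(B_i)\le\eps\sum_i\mu(B_i)/r_i\to 0$. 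Lower semicontinuity then gives $P(E_t\cap\Om,X)\le P(E_t,\Om)+C\,\Hcal(\dOm)$, and integrating in $t$ via coarea yields $\|Dv\|(X)<\infty$. The point is that this argument trades the unavailable \emph{uniform} trace decay for the purely measure-theoretic fact $\Hcal(\dOm)<\infty$, which controls the total perimeter cost of excising a neighbourhood of $\dOm$ regardless of how $v$ behaves there.
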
 
\begin{proof}
Let $v$ be extended to $X\setminus\Om$ by setting it to be equal to $\chi_{F}$ there. A priori, we know only that $v\in \BV(\Om)$, and
so we need to show that the extended function, also denoted $v$,
belongs to $\BV(X)$. To this end, we employ
the coarea formula. Recall that  $0\le v\le 1$. For $0<t<1$, let
$E_t = \{x\in X\colon v(x)>t\}$. Then
\[
 E_t=(E_t\cap\Om)\cup (F\setminus\Om).
\]
Observe that $\chi_{F\setminus\Om}=\chi_{F} \chi_{X\setminus\Om}$, and hence
$P(F\setminus \Om, X)<\infty$ by the assumptions that $\Hcal(\dOm)<\infty$
(which implies that $P(\Om,X)<\infty$) and
$P(F,X)<\infty$. Thus, in order to gain
control over $P(E_t,X)$, we only need to control $P(E_t\cap\Om,X)$.
For every $\eps>0$,
we can cover the compact set $\dOm$ by finitely many balls $B_i$, $i=1,\cdots, k$, with radii $r_i<\eps$ such that
\begin{enumerate}
  \renewcommand{\theenumi}{(\arabic{enumi})}
  \renewcommand{\labelenumi}{\theenumi}
  \setlength{\parskip}{0.5pt plus 0.5pt minus 0.5pt}
  \setlength{\itemsep}{0.5pt plus 1pt minus 0.5pt}
  \item $P(B_i,X)\le 2C_D \tfrac{\mu(B_i)}{r_i}$ for each $i$,
  \label{it:PBi-vs-muBi/ri}
  \item $\sum_i \tfrac{\mu(B_i)}{r_i}\le C_D(1+\eps)\Hcal(\partial\Om)$.
\end{enumerate}
We now show how to find such a cover.

An application of the coarea formula applied to the function $u(x) = d(z, x)$ for some fixed $z \in X$ gives that if $r>0$, then
\begin{align}
\notag
  \mu(B(z,2r)) \ge \|Du\|(B(z,2r)) & = \int_0^{2r} P(\{u>t\}, B(z,2r))\,dt \\
  & \ge \int_{r}^{2r} P(B(z,t), X)\,dt \ge r \, P(B(z,r_0), X)
\label{eq:coarea-muB/r-vs-PB}
\end{align}
for some $r_0 \in [r, 2r]$. In order to find balls $B_i$ of the desired properties,
we cover $\dOm$ by finitely many balls $B(z_i, R_i)$ with radius $R_i < \eps/2$
so that
\[
\sum_i \frac{\mu(B(z_i, R_i))}{R_i} < (1+\eps) \Hcal(\dOm).
\]
By~\eqref{eq:coarea-muB/r-vs-PB}, there is $r_i \in [R_i, 2R_i]$ such that
\[
P(B(z_i,r_i),X) \le \frac{\mu(B(z_i, 2R_i))}{R_i} \le \frac{2C_D \mu(B(z_i, r_i))}{r_i}.
\]
Setting $B_i = B(z_i, r_i)$ yields that $\sum_i \frac{\mu(B_i)}{r_i} \le \sum_i \frac{\mu(B(z_i, 2R_i))}{R_i} \le  C_D(1+\eps) \Hcal(\dOm)$.

We now set $U_{\eps,t}=(E_t\cap\Om)\setminus\bigcup_{i=1}^k B_i$.
Note that as $r_i<\eps$,
\[
\mu\biggl(\bigcup_iB_i\biggr)\le \sum_i\mu(B_i)\le \eps \sum_i \frac{\mu(B_i)}{r_i}
  \le \eps C_D (1+\eps)\Hcal(\partial\Om)\to 0\quad\text{as }\eps\to 0^+.
\]
Therefore $\chi_{U_{\eps,t}}\to\chi_{E_t}$ in $L^1(X)$ as $\eps\to0^+$. Since $U_{\eps, t}$ is compactly contained in $\Om$, we can estimate
\begin{align*}
P(U_{\eps,t},X)=P(U_{\eps,t},\Om)&\le P(E_t,\Om)+\sum_{i=1}^kP(B_i,X)\\
  &\le P(E_t,\Om)+C\, \sum_{i=1}^k\frac{\mu(B_i)}{r_i}
	\le P(E_t,\Om)+C(1+\eps)\Hcal(\dOm).
\end{align*}
The lower semicontinuity of the BV energy with respect to
$L^1$-convergence gives that
\[
P(E_t\cap\Om,X)\le \liminf_{\eps \to 0^+} P(U_{\eps,t},X) \le P(E_t,\Om)+C\, \Hcal(\dOm).
\]
Thus by the coarea formula,
\begin{align*}
\| Dv\|(X)=\int_0^1P(E_t,X)\, dt
   &\le \int_0^1[P(E_t\cap\Om,X)+P(F\setminus\Om,X)]\, dt\\
	 &\le \int_0^1[P(E_t,\Om)+C\Hcal(\dOm)+P(F\setminus\Om,X)]\, dt\\
	 &\le \| Dv\|(\Om)+C\Hcal(\dOm)+P(F\setminus\Om,X)<\infty.
\end{align*}
Hence $v\in \BV(X)$.

Finally, since $\Tr v=\chi_F$ $\Hcal$-a.e.\@ on $\dOm$,
the jump set $S_v$ of $v$ satisfies $\Hcal(\dOm\cap S_v\setminus\partial F)=0$
and hence
$\Hcal(S_v\cap\dOm)\le \Hcal(\partial F\cap\dOm)=0$.
Therefore, $\| Dv\|(\dOm)=0$,
recall the decomposition \eqref{eq:variation measure decomposition}
and the discussion
after it.
\end{proof}
Now we compare weak solutions and solutions for bounded domains whose boundary has
positive mean curvature.
\begin{prop}\label{pro:weak-to-strong-positiveCurve}
Suppose that $\dOm$ is of positive mean curvature and that $\Hcal(\dOm)$ is finite. Let
$F\subset X$ be open with $P(F,X)<\infty$ and $\Hcal(\partial F\cap\dOm)=0$. Then,
all weak solutions $u_{\chi_{F}}$
are also solutions, so that if $v\in \BV(\Om)$ with $\Tr v=\chi_{F}$ $\Hcal$-a.e.~on $\partial\Om$, then
\[
  \| Du_{\chi_F}\|(\Om)\le \| Dv\|(\Om).
\]
\end{prop}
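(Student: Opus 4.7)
The plan is to chain together Corollary~\ref{cor:correct-trace} (which supplies the correct trace for any weak solution) and Lemma~\ref{lem:ext-of-trace-global} (which shows that a $v$ having the correct trace can be extended across $\partial\Omega$ without adding any BV mass on $\partial\Omega$). Once both tools are in hand, the weak-solution inequality (formulated on $\overline{\Omega}$) and the solution inequality (formulated on $\Omega$) differ only by $\|Dv\|(\partial\Omega)$ and $\|Du_{\chi_F}\|(\partial\Omega)$, which we can control.

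First, I would fix a competitor $v\in\BV(\Omega)$ with $\Tr v=\chi_F$ $\Hcal$-a.e.\ on $\dOm$. By replacing $v$ with its truncation $\max\{0,\min\{v,1\}\}$, which does not increase total variation and preserves the $\{0,1\}$-valued trace, I may assume $0\le v\le 1$. Then I extend $v$ to all of $X$ by setting $v=\chi_F$ on $X\setminus\Om$. By Lemma~\ref{lem:ext-of-trace-global}, this extension lies in $\BV_\loc(X)$ and satisfies $\|Dv\|(\dOm)=0$, so $\|Dv\|(\overline{\Om})=\|Dv\|(\Om)$.

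Next, since $u_{\chi_F}$ is a weak solution with boundary data $\chi_F$ and the extended $v$ equals $\chi_F$ on $X\setminus\Om$, Definition~\ref{def:weak-strong-solutions} gives
\[
\|Du_{\chi_F}\|(\overline{\Om})\le \|Dv\|(\overline{\Om})=\|Dv\|(\Om).
\]
Combining with the trivial bound $\|Du_{\chi_F}\|(\Om)\le \|Du_{\chi_F}\|(\overline{\Om})$ yields the desired inequality $\|Du_{\chi_F}\|(\Om)\le \|Dv\|(\Om)$. Finally, $u_{\chi_F}$ qualifies as a \emph{solution} in the sense of Definition~\ref{def:strong-solutions} because Corollary~\ref{cor:correct-trace} gives $\Tr u_{\chi_F}=\chi_F$ $\Hcal$-a.e.\ on $\dOm$ (using the hypotheses $P(F,X)<\infty$ and $\Hcal(\dOm\cap\partial F)=0$), so the above inequality is exactly the minimization property demanded of a solution.

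The only nontrivial step is the identity $\|Dv\|(\overline{\Om})=\|Dv\|(\Om)$, and this is precisely what Lemma~\ref{lem:ext-of-trace-global} provides: since the inside trace of $v$ matches the outside value $\chi_F$ $\Hcal$-a.e.\ on $\dOm$, the extended $v$ has no jump on $\dOm$ apart from a subset of $\partial F\cap\dOm$, which is $\Hcal$-null by hypothesis. Everything else is essentially bookkeeping, and I do not anticipate a genuine obstacle beyond verifying the applicability of the two cited results.
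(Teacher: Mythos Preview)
Your proposal is correct and follows essentially the same route as the paper's proof: invoke Corollary~\ref{cor:correct-trace} for the trace of $u_{\chi_F}$, truncate the competitor $v$, apply Lemma~\ref{lem:ext-of-trace-global} to the extended $v$ to kill $\|Dv\|(\dOm)$, and then use the weak-solution inequality on $\overline{\Om}$. The only difference is that the paper also applies Lemma~\ref{lem:ext-of-trace-global} to $u_{\chi_F}$ itself to obtain $\|Du_{\chi_F}\|(\dOm)=0$, whereas you bypass this with the trivial monotonicity $\|Du_{\chi_F}\|(\Om)\le\|Du_{\chi_F}\|(\overline{\Om})$; your shortcut is perfectly valid and slightly cleaner.
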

Note that if $f$ is a continuous BV function on $X$, then for almost every $t\in\R$, the set $F=\{x\in X\, :\, f(x)>t\}$ satisfies the hypotheses of the above proposition. This follows from the coarea formula and the fact that $\Hcal(\dOm)<\infty$.
\begin{proof}
For the sake of ease of notation, set $u= u_{\chi_{F}}$.
Note that as $\dOm$ is of positive mean curvature,
$\Tr u_{\chi_{F}} =\chi_{F}$ $\Hcal$-a.e.~in
$\dOm$ by Corollary~\ref{cor:correct-trace}.
Moreover, by the maximum principle~\cite[Theorem 5.1]{HKLS},
we know that $0\le u\le 1$.
Hence by Lemma~\ref{lem:ext-of-trace-global}
we have $u\in \BV_{\loc}(X)$ (which comes for free as $u$ is a weak solution) 
with $\| Du\|(\dOm)=0$.

If $v\in \BV(\Om)$ with $\Tr v=\chi_{F}$ $\Hcal$-a.e.~in
$\dOm$, then we can assume that $0 \le v \le 1$, since
truncations do not increase BV energy and the truncation
$\min\{1, \max\{0, v\}\}$ also has the correct trace $\chi_{F}$ on $\dOm$.
By Lemma~\ref{lem:ext-of-trace-global} again we know that the extension of $v$
to $X\setminus\Om$ by $\chi_F$ gives a function in $\BV_{\loc}(X)$ with
$\| Dv\|(\dOm)=0$. Now,
\[
\| Du\|(\overline{\Om})\le \| Dv\|(\overline{\Om})
\]
by the fact that $u$ is a weak solution to the Dirichlet problem on $\Om$ with boundary data $\chi_{F}$. Then,
\begin{align*}
\| Du\|(\Om) = \| Du\|(\overline\Om) -\| Du\|(\dOm) & \le \| Dv\|(\overline\Om) -\| Du\|(\dOm)\\
& = \| Dv\|(\overline\Om) -\| Dv\|(\dOm) = \| Dv\|(\Om).
\qedhere
\end{align*}
\end{proof}
The previous proposition tells us that in using weak solutions we do obtain (strong) solutions.
The next proposition completes the picture regarding the relationship between
the notions of solutions and weak solutions to the Dirichlet problem, by showing that
the \emph{only way} to obtain (strong) solutions is through weak solutions.
\begin{prop}
\label{pro:strong-is-weak}
Suppose 
that $\Hcal(\dOm)<\infty$. 
Let $F\subset X$
be an open set such that $\Hcal(\dOm\cap\partial F)=0$ and $P(F,X)<\infty$.
If $v\in\BV(\Om)$ is a solution to the Dirichlet problem with boundary data $\chi_F$, then
the extension of $v$ by $\chi_F$ outside $\Om$ is a weak solution to the said Dirichlet problem.
\end{prop}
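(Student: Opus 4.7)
The plan is to reduce the weak Dirichlet problem to the strong problem that $v$ already solves. The passage from $\Om$ to $\overline{\Om}$ is handled cleanly by Lemma~\ref{lem:ext-of-trace-global}; the passage from $\overline{\Om}$ back to $\Om$ is the new content required here.

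I first truncate $v$ to $[0,1]$, which preserves both the trace $\chi_F$ and the strong minimality since truncation does not increase BV energy. Applying Lemma~\ref{lem:ext-of-trace-global} to $v$ yields that $\tilde v\in\BV_{\loc}(X)$ with $\|D\tilde v\|(\dOm)=0$, so that
\[
\|D\tilde v\|(\overline{\Om})=\|Dv\|(\Om).
\]
In particular, $\tilde v$ is a valid competitor for the weak Dirichlet problem with data $\chi_F$.

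To verify weak minimality, I take an arbitrary competitor $w\in\BV(X)$ with $w=\chi_F$ on $X\setminus\Om$ and truncate so that $0\le w\le 1$. Split $\|Dw\|(\overline{\Om})=\|Dw\|(\Om)+\|Dw\|(\dOm)$. The plan is then to manufacture from $w$ a strong competitor $w^{\ast}\in\BV(\Om)$ with $\Tr w^{\ast}=\chi_F$ $\Hcal$-a.e.\ on $\dOm$ and $\|Dw^{\ast}\|(\Om)\le \|Dw\|(\overline{\Om})$. Once such a $w^{\ast}$ is in hand, the strong minimality of $v$ delivers
\[
\|D\tilde v\|(\overline{\Om})=\|Dv\|(\Om)\le \|Dw^{\ast}\|(\Om)\le \|Dw\|(\overline{\Om}),
\]
which is the desired weak minimality of $\tilde v$.

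The construction of $w^{\ast}$ exploits the fact that, away from the $\Hcal$-null set $\dOm\cap\partial F$, the outer trace of $w$ from $X\setminus\Om$ equals $\chi_F$, since $w\equiv\chi_F$ there. Hence every jump of $w$ across $\dOm$ records a mismatch between the inner trace $\Tr^{-}(w|_{\Om})$ and $\chi_F$, whose mass is captured by $\|Dw\|(\dOm)$. In a vanishing inner collar of $\dOm$ one replaces the interior trace by $\chi_F$; the extra perimeter cost is absorbed by $\|Dw\|(\dOm)$, and one passes to the limit via the lower semicontinuity~\eqref{eq:BV-lsc}. The main obstacle is making this gluing rigorous in the generality of a doubling metric measure space: in the Euclidean Lipschitz category it is routine trace extension, but in the abstract metric setting one must invoke the BV trace theory developed in~\cite{LS,MSS} and exploit the quantitative interplay between the mass of $\|Dw\|$ on $\dOm$ and the codimension-one Hausdorff measure used in the trace characterization.
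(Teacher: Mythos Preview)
Your step~4 is a genuine gap, not a routine detail. You acknowledge that ``making this gluing rigorous'' in the metric setting is the main obstacle, and indeed you do not carry it out: the collar construction you describe requires, at minimum, a quantitative trace estimate of the form
\[
\int_{\dOm}\bigl|\Tr^{-}(w|_{\Om})-\chi_F\bigr|\,d\Hcal \;\lesssim\; \|Dw\|(\dOm)
\]
together with an extension operator that absorbs this boundary mismatch into interior BV energy with no loss of constants. Neither ingredient is supplied, and the references you cite do not give this directly under the bare hypotheses of the proposition (they impose additional regularity on~$\Om$, cf.\ the conditions on page~\pageref{enum:conditions for trace} and in Section~\ref{sec:cont-data}). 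As written, the argument is a sketch of a plan whose hardest step is left open.

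The paper bypasses this entirely by a much shorter route. Rather than compare $\tilde v$ to an \emph{arbitrary} weak competitor $w$, it compares to a \emph{single} weak solution $\chi_E$, whose existence is guaranteed by Lemma~\ref{lem:soln-vs-minimalSet}. One then invokes Corollary~\ref{cor:correct-trace} to obtain $\Tr\chi_E=\chi_F$ $\Hcal$-a.e.\ on $\dOm$, so that $\chi_E$ is a legitimate competitor in the \emph{strong} problem. Since $v$ is a strong solution,
\[
\|D\tilde v\|(\overline{\Om})=\|Dv\|(\Om)\le \|D\chi_E\|(\Om)\le \|D\chi_E\|(\overline{\Om}),
\]
and since $\chi_E$ is already a weak minimizer, the right-hand side is $\le\|Dw\|(\overline{\Om})$ for every weak competitor $w$. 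No gluing is needed. Note, however, that this route uses Corollary~\ref{cor:correct-trace}, which carries the positive mean curvature hypothesis; the paper's proof therefore tacitly relies on that assumption even though the statement of the proposition does not list it. Your direct approach, \emph{if} the gluing could be made rigorous under the stated hypotheses alone, would be genuinely stronger---but as it stands, the argument is incomplete precisely at that point.
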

\begin{proof}
Let $v\in \BV(\Om)$ be a solution to the Dirichlet problem with boundary
data $\chi_F$. Then $\Tr v=\chi_F$ $\Hcal$-a.e.~in $\dOm$, and so by
Lemma~\ref{lem:ext-of-trace-global}, the extension of $v$ by $\chi_F$ to
$X\setminus\Om$, also denoted $v$, lies in $\BV_{\loc}(X)$ with $\| Dv\|(\dOm)=0$. In particular,
$\| Dv\|(\overline{\Om})=\| Dv\|(\Om)$.

Let $E\subset X$ be a weak solution set for the boundary data $\chi_F$.
The existence of such a set is guaranteed by Lemma~\ref{lem:soln-vs-minimalSet}.
Then, $\Tr \chi_E = \chi_F$ $\Hcal$-a.e.\@ in $\dOm$ by Corollary~\ref{cor:correct-trace}. Since $v$ is a solution to the Dirichlet problem on $\Om$ with boundary data
$\chi_F$, it follows that
\[
\| Dv\|(\overline{\Om})=\| Dv\|(\Om)
\le \| D\chi_E\|(\Om)\le \| D\chi_E\|(\overline{\Om}).
\]
The fact that $\chi_E$ is a weak solution to the Dirichlet problem on
$\Om$ with boundary data $\chi_F$ tells us that $v$ is also a weak solution, since
$\| Dv\|(\overline{\Om})\le \| D\chi_E\|(\overline{\Om})
\le \| Dw\|(\overline{\Om})$
whenever $w\in \BV_{\loc}(X)$ with $w=\chi_F$ on $X\setminus\Om$.
\end{proof}
If $\Om\subset X$ is a bounded domain such that $\Hcal(\dOm)<\infty$ and with $\dOm$ of positive mean curvature, then Propositions~\ref{pro:weak-to-strong-positiveCurve} and~\ref{pro:strong-is-weak} together tell us that weak solutions to the Dirichlet problem with boundary data $\chi_F$ are solutions to the said Dirichlet problem and vice versa, provided that $F\subset X$ is an open set of finite perimeter in $X$ such that $\Hcal(\dOm\cap\partial F)=0$.
Hence, there is no need to distinguish between weak solutions and solutions for such type of Dirichlet boundary data.

Now we are ready to prove the main theorem of this paper, the existence of solutions for continuous boundary data.
While~\cite{SWZ} focuses on Lipschitz boundary data, we consider the larger class,
$\BV_{\loc}(X)\cap \Ccal(X)$, of boundary data. 
The reason why~\cite{SWZ} focused on Lipschitz data was because for such data,
in the Euclidean setting, it was also possible to show that there is a globally Lipschitz solution as well. We will show in the
final section of this paper that even in the most innocuous setting of  weighted Euclidean spaces, such Lipschitz continuity
fails; therefore, there is no reason for us to restrict ourselves to the study of Lipschitz boundary data.
\begin{thm}\label{thm:main}
Suppose that $\Hcal(\dOm)<\infty$
and that $\dOm$ has positive mean curvature. Let
$f \in \BV_{\loc}(X)\cap \Ccal(X)$. Then, there is a solution 
$u\in \BV_{\loc}(X)$ to the Dirichlet problem in $\Om$.
Furthermore,
\[
 \lim_{\Om\ni y\to x}u(y)=f(x)
\]
whenever $x\in \dOm$. 
Moreover, $u$ is a weak solution to the given Dirichlet problem.
\end{thm}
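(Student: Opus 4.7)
The strategy is the ``stacking pancakes'' approach outlined in the introduction. First, I would fix a countable dense set $T\subset\R$ of levels $t$ that are \emph{admissible} in the sense that $P(\{f>t\},X)<\infty$ and $\Hcal(\dOm\cap\partial\{f>t\})=0$; after a harmless truncation of $f$ outside a large ball enclosing $\overline{\Om}$ (which does not alter $f$ near $\overline{\Om}$), both requirements hold for almost every $t$---the first by the coarea formula, the second because $\partial\{f>t\}\subset\{f=t\}$ by continuity of $f$, while $\Hcal(\dOm\cap\{f=t\})>0$ can hold for only countably many $t$ in view of $\Hcal(\dOm)<\infty$.

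For each $t\in T$, let $E_t$ be the minimal weak solution set for the boundary data $\chi_{F_t}$ with $F_t:=\{f>t\}$, furnished by Proposition~\ref{pro:minSolutionSet-exists}, and choose the pointwise representative $E_t=\{u_{\chi_{F_t}}^\vee\ge 1\}$ so that Lemma~\ref{lem:Not-Close-to-bdry} gives pointwise ball inclusions. Lemma~\ref{lem:nesting-minsol} yields the nesting $E_t\subset E_s$ whenever $s\le t$ in $T$. I would then define
\[
u(x):=\sup\{t\in T:x\in E_t\}\qquad(\text{with }\sup\emptyset=-\infty).
\]
Since each $E_t$ agrees with $F_t$ outside $\Om$, density of $T$ immediately gives $u=f$ on $X\setminus\Om$. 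To verify the pointwise limit at a boundary point $x\in\dOm$, fix $\eps>0$ and pick $s,t\in T$ with $f(x)-\eps<s<f(x)<t<f(x)+\eps$. By continuity of $f$ there is $r>0$ with $B(x,r)\subset\{s<f<t\}$. Applying Lemma~\ref{lem:Not-Close-to-bdry} to $F_s$ gives $B(x,\varphi(r))\subset E_s$, hence $u\ge s$ there; applying it to $X\setminus F_t$ (using that $1-u_{\chi_{F_t}}$ is a weak solution with boundary data $\chi_{X\setminus F_t}$) gives $B(x,\varphi(r'))\cap E_t=\emptyset$, and nesting then forces $u\le t$ on that ball.

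Next I would extend the minimal weak solution construction to \emph{every} admissible level, calling the resulting sets $\widetilde E_t$; by Lemma~\ref{lem:nesting-minsol} the nesting persists for all admissible $t$. The key claim is that $\{u>t\}=\widetilde E_t$ up to a $\mu$-null set for almost every admissible $t$. The inclusion $\{u>t\}\subset\widetilde E_t$ is immediate: if $x\in\{u>t\}$ then $x\in E_s$ for some $s\in T$ with $s>t$, and $E_s\subset\widetilde E_t$ by nesting. Conversely, $\widetilde E_t\subset E_s$ for every $s\in T$ with $s<t$, so by density $\widetilde E_t\subset\{u\ge t\}$. Hence $\widetilde E_t\bigtriangleup\{u>t\}\subset\{u=t\}$, a set of $\mu$-measure zero for a.e.\ $t$.

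The remainder is routine. By the coarea formula and the superlevel-set identification,
\[
\|Du\|(\overline{\Om})=\int_\R P(\{u>t\},\overline{\Om})\,dt=\int_\R P(\widetilde E_t,\overline{\Om})\,dt,
\]
and finiteness of the right side (using $P(\widetilde E_t,\overline{\Om})\le P(F_t,\overline{\Om})$ together with $\|Df\|(\overline{\Om})<\infty$) gives $u\in\BV_\loc(X)$. For any competitor $v\in\BV(X)$ with $v=f$ on $X\setminus\Om$, the set $\{v>t\}$ agrees with $F_t$ outside $\Om$, so minimality of $\widetilde E_t$ yields $P(\widetilde E_t,\overline{\Om})\le P(\{v>t\},\overline{\Om})$ for a.e.\ $t$; integrating and using coarea for $v$ gives $\|Du\|(\overline{\Om})\le\|Dv\|(\overline{\Om})$, so $u$ is a weak solution, and Proposition~\ref{pro:weak-to-strong-positiveCurve} then promotes it to a solution. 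The main obstacle is precisely the superlevel-set identification $\{u>t\}=\widetilde E_t$: because in the metric setting we lack the Euclidean touching/uniqueness property for nested minimal boundary surfaces, $u$ may genuinely jump at individual levels, so the identification has to be read in a $\mu$-a.e.\ sense, leveraging $\mu(\{u=t\})=0$ for a.e.\ $t$ and therefore operating only on integrated quantities rather than level-by-level.
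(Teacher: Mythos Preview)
Your proposal follows essentially the same ``stacking'' approach as the paper, and the construction of $u$, the verification of the boundary limit, the superlevel-set identification $\{u>t\}=\widetilde E_t$ up to a null set for a.e.\ $t$, and the proof that $u$ is a weak solution are all correct and mirror the paper's argument closely.

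There is, however, a gap in your final sentence. You invoke Proposition~\ref{pro:weak-to-strong-positiveCurve} to promote the weak solution $u$ to a (strong) solution, but that proposition is stated only for boundary data of the form $\chi_F$ with $F$ open, $P(F,X)<\infty$, and $\Hcal(\dOm\cap\partial F)=0$; it does not apply to general continuous $f$. (The paper does eventually prove such a promotion for continuous $f$ in Proposition~\ref{pro:strong-sol=weak-sol}, but that result is proved \emph{after} Theorem~\ref{thm:main} and relies on it, so you cannot cite it here.) The fix, which is exactly what the paper does, is to apply Proposition~\ref{pro:weak-to-strong-positiveCurve} \emph{level by level} rather than to $u$ directly: since each $\chi_{\widetilde E_t}$ is a weak solution for $\chi_{F_t}$, Proposition~\ref{pro:weak-to-strong-positiveCurve} makes it a strong solution for $\chi_{F_t}$. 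Then, given any competitor $v\in\BV(\Om)$ with $\Tr v=f$ $\Hcal$-a.e.\ on $\dOm$, one has $\Tr\chi_{\{v>t\}}=\chi_{F_t}$ $\Hcal$-a.e.\ on $\dOm$ for a.e.\ $t$, whence $P(\widetilde E_t,\Om)\le P(\{v>t\},\Om)$; integrating via the coarea formula gives $\|Du\|(\Om)\le\|Dv\|(\Om)$. This is precisely the coarea comparison you already wrote for $\overline\Om$, just rerun on $\Om$ using strong rather than weak minimality of the individual levels, so the repair is minor and entirely within the framework you have set up.
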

\begin{proof}
Recall from our standing assumptions, listed at the beginning of Section~\ref{sec:prelresults},
that $\Omega$ is bounded. Hence, we can find a ball $B\subset X$ such that $\overline{\Om}\subset B$,
and we can find a Lipschitz function $\varphi:X\to[0,1]$ such that $\varphi=1$ on $B$ and $\varphi=0$ on $X\setminus 2B$.
Replacing $f$ with $f\varphi$ in the above theorem will not change the class of solutions inside $\Om$.
Therefore, we will assume without loss of generality that $f$ is compactly supported and hence bounded, and $f\in \BV(X)\cap \Ccal(X)$.

For $t\in\R$, define $F_t=\{x\in X\colon f(x)>t\}$.
Then, $F_t$ is open by continuity of $f$. Moreover, $F_t = \emptyset$ for sufficiently large $t$,
while $F_t = X$ for sufficiently small $t$.

As $f\in \BV(X)$, the coarea formula \eqref{eq:coarea-formula} yields that $P(F_t,X)<\infty$ for
a.e.\@ $t\in\R$. Since $\partial F_t \cap \partial F_s = \emptyset$ whenever $s\neq t$, the finiteness
of $\Hcal(\dOm)$ implies that $\Hcal(\dOm \cap \partial F_t) = 0$ for all but (at most) countably many $t\in\R$.
Let
\[
  J = \{t \in \R \colon P(F_t,X)<\infty\ \text{and}\ \Hcal(\dOm \cap \partial F_t) = 0\}.
\]
For every $t\in J$, we can apply Proposition~\ref{pro:minSolutionSet-exists} to find a set 
$\widetilde{E}_t \subset X$ that is 
the minimal weak solution set to the Dirichlet problem on $\Om$ with boundary data $\chi_{F_t}$. 
We set $E_t=\{x\in X\, :\, \chi_{\widetilde{E}_t}^\vee(x)>0\}$.
Then, $\chi_{E_t}$ is also a minimal solution. 

By Lemma~\ref{lem:nesting-minsol}, the family of sets $\{E_t\colon t\in J\}$ is nested in the
sense that $E_s \subset E_t$ whenever $s,t \in J$ with
$s>t$, since $F_s \subset F_t$.
As $\Leb1(\R \setminus J) = 0$, we can pick a
countable set $I \subset J$ so that $I$ is dense in $\R$.
Now, we can define $u: X \to\R$ by
\[
u(x)=\sup\{s\in I\colon x\in E_s\}, \quad x\in X,
\]
and show that it satisfies the conclusion of the theorem. 
Observe that $u$ is measurable because
\[
\{x\in X\colon u(x)\ge t\}=\bigcap_{I \ni \sigma < t} E_\sigma, \quad t\in \R,
\]
i.e., all superlevel sets can be expressed as countable intersections of measurable sets.

For $t\in J$, i.e., for a.e.\@ $t\in\R$, we have 
\begin{equation}
  \label{eq:Kt-def}
  K_t \coloneq \{x \in X \colon u(x) > t\}
   =\bigcup_{I\ni s > t} E_s \subset E_t \subset \{x \in X \colon u(x) \ge t\}
    = \bigcap_{I\ni \sigma < t} E_\sigma.
\end{equation}
Since $\mu$ is $\sigma$-finite on $X$, we have 
$\mu\bigl(\{x\in X\colon u(x) = t\}\bigr) = 0$ for all but (at most) countably many
$t\in \R$. In particular, $\mu(K_t \symdiff E_t)=0$ for a.e.\@ $t\in J$, whence for such $t$, the set
$K_t$ is a weak solution set for the
Dirichlet problem with boundary data $\chi_{F_t}$. Considering the fact that $\chi_{F_t}$ is one of the
competitors in the definition of a solution to a Dirichlet problem with boundary data $\chi_{F_t}$, the coarea formula yields that
\begin{align*}
  \|Du\|(X) = \int_\R P(\{u>t\}, X)\,dt & = \int_\R P(K_t, X)\,dt  \\
  & \le \int_\R P(F_t, X)\,dt = \|Df\|(X) < \infty.
\end{align*}
Since $u=f$ in $X \setminus \Om$, it follows that $u\in\BV(X)$. Note that up to this point, we did not need the positive mean curvature property of $\dOm$.

Next, we show that $\lim_{\Om\ni y\to z}u(y)=f(z)$ for $z\in\dOm$ and that
$\Tr u=f$ on $\dOm$. To this end, note that if $z\in \dOm$ and $t\in I$ with $t>f(z)$, then there is some $r_{z,t}>0$ such that $B(z,r_{z,t}) \subset X \setminus \overline{F_t}$.
Then, $B(z, \varphi(r_{z,t})) \cap \Om \subset \Om \setminus E_t$ similarly as in the proof of
Corollary~\ref{cor:correct-trace}. Thus, $u \le t$ on $B(z,\varphi(r_{z,t}))\cap\Om$ for each $I \ni t>f(z)$. Hence,
\[
 \limsup_{\Om\ni y\to z}u(y)\le f(z).
\]
Also, for every $t\in I$, $t<f(z)$, there is $\rho_{z,t}>0$ such that $B(z,\rho_{z,t})\subset F_t$. Hence, 
$B(z,\varphi(\rho_{z,t}))\cap\Om\subset E_t$ for such $t$. Consequently,
$u\ge t$ on $B(z,\varphi(\rho_{z,t}))\cap\Om$. Thus,
\[
 \liminf_{\Om\ni y\to z}u(y)\ge f(z).
\]
Considering that $\lim_{\Om\ni y\to z}u(y) = f(z)$, we can conclude that $\Tr u(z) = f(z)$ directly
from the definition of the trace, see~\eqref{eq:trace-def}.

Next, we show that $u$ is a solution to the Dirichlet problem in $\Om$ with boundary data $f$.
We have already proven that $\Tr u=f$ on $\dOm$. Let $v\in \BV(\Om)$ such that $\Tr v=f$
$\Hcal$-a.e.~on $\dOm$. Then, $\Tr \chi_{\{v>t\}} =\chi_{F_t}$ $\Hcal$-a.e.\@ on $\partial\Om$
for almost every $t\in\R$.
Since $K_t$ is a solution to the Dirichlet problem
with boundary data $\chi_{F_t}$ for a.e.\@ $t\in J$, for such $t$ we can estimate $P(K_t,\Om)\le P(\{v>t\},\Om)$.
By the coarea formula, we obtain that
\[
\| Du\|(\Om)=\int_{\R}P(K_t,\Om)\, dt\le \int_{\R}P(\{v>t\},\Om)\, dt=\| Dv\|(\Om).
\]
Thus, $u$ is a solution to the Dirichlet problem in $\Om$ with boundary data $f$.

Finally, we show that $u$ is a weak solution. This part also does not need the positive mean curvature 
assumption of $\dOm$.
Note that by construction of $u$ and by the continuity of $f$, we have $u=f$ on $X\setminus\Om$.
Assume that $w \in \BV_\loc(X)$ satisfies $w=f$ in $X \setminus \Om$. In order to prove that $u$ is a weak solution to the 
Dirichlet problem, we need to verify that $\|Du\|(\overline\Om) \le \|Dw\|(\overline\Om)$. Recall that for almost every 
$t\in\R$, the set $K_t$ gives a weak solution set for the Dirichlet problem with boundary data $\chi_{F_t}$,
see the discussion following~\eqref{eq:Kt-def}.
In particular, $P(K_t, \overline\Om) \le P(\{w>t\}, \overline\Om)$. The coarea formula then yields that
\[
  \|Du\|(\overline\Om) = \int_{\R} P(K_t, \overline\Om)\,dt \le \int_{\R} P(\{w>t\}, \overline\Om)\,dt = \|Dw\|(\overline\Om)\,,
\]
which concludes the proof that $u$ is a weak solution to the Dirichlet problem with boundary data $f$.
\end{proof}
It might seem at a first casual glance at the  proof above that it suffices to assume
that the boundary data $f$ is semicontinuous.
 The reader should note that this is not
the case; our proof does not work for non-continuous but semicontinuous $f$,
for we need openness of \emph{both} $\{f>t\}$ and $\{f<t\}$ for all $t\in\R$.
This will fail for
non-continuous semicontinuous functions. It might be that the theorem holds also
for semicontinuous functions, but our method of proof will not work for them.
The paper of~\cite{ST} also shows that even in the simple setting of the
Euclidean plane, there are functions $f\in \BV(\R^2)$ for which the
(strong) solution to the Dirichlet problem for least gradient in the Euclidean disk
with boundary data $f$ does not exist. Thus, it is reasonable to restrict our attention
to continuous boundary data.
\begin{remark}
A study of the proof of Theorem~\ref{thm:main} gives the following generalization of this theorem to a wider class of domains.
Given a bounded domain $\Om\subset X$ with $\mu(X\setminus\Om)>0$ and a point $z\in\dOm$, we say that 
$\dOm$ \emph{has positive mean curvature at} $z$ if there is a non-decreasing function $\varphi_z:(0,\infty)\to(0,\infty)$ and $r_z>0$ such that
$u_{\chi_{B(z,r)}}^\vee\ge 1$ on $B(z,\varphi_z(r))$ for every $r \in (0, r_z)$ with $P(B(z,r),X)<\infty$.

Now, suppose that $\Hcal(\dOm)<\infty$, $I\subset\dOm$, and that $\dOm$ has positive mean curvature at 
each $z\in I$, and suppose that 
$f\in \BV_\loc(X)\cap C(X)$. Then, there is a weak solution $u\in\BV_\loc(X)$ to the Dirichlet problem in $\Om$ with boundary data $f$
such that for all $z\in I$,
\[
\lim_{\Om\ni y\to z}u(y)=f(z).
\]
Note that the planar domain 
\[
\Om=\{(x,y)\in\R^2\, :\, x^2+y^2<1,\text{ and }|y|>x^2\text{ when }x>0\}
\]
has the property that $\dOm$ has positive mean curvature at every $z\in \dOm\setminus\{(0,0)\}$. Hence, even though $\dOm$ does not 
have positive mean curvature, the conclusion of Theorem~\ref{thm:main} applies to each point in $\dOm\setminus \{(0,0)\}$.
On the other hand, if $\dOm$ is not of positive mean curvature at some $z\in \dOm$, then it is possible to
find a Lipschitz function $f$ on $X$ and a weak solution $u$ to the Dirichlet problem on $\Om$ with boundary data $f$ such that
$\lim_{\Om\ni y\to z}u^\wedge(y)$ either does not exist or 
is different from $f(z)$. Thus, positive mean curvature of $\dOm$ at a point $z\in \dOm$ determines 
whether that point is a regular point or not.
\end{remark}
\begin{remark}
Given a Lipschitz function $f$ defined on $\dOm$, we can apply the McShane extension theorem and then use 
Theorem~\ref{thm:main} to obtain a solution to the Dirichlet problem in $\Om$ with boundary data $f$.

It is in fact possible to further relax the assumptions on the boundary data \emph{provided} that $\Om$ satisfies some 
further geometric conditions. For such domains, we will show in the next section that
given $f\in \Ccal(\dOm)$,  one can apply the results of~\cite{MSS} to construct a bounded continuous BV extension 
of $f$ to $X$ so that Theorem~\ref{thm:main} may be used.
\end{remark}
\begin{prop}
\label{pro:strong-sol=weak-sol}
Under the assumptions of Theorem~\ref{thm:main}, every weak solution to the Dirichlet problem in 
$\Om$ with boundary data $f$ is a solution to the said problem,  and
conversely, every 
solution, when extended by $f$ outside $\Om$, is a weak solution.
\end{prop}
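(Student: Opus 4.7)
The strategy is to leverage the set-level analogues in Propositions~\ref{pro:weak-to-strong-positiveCurve} and~\ref{pro:strong-is-weak}, using the function $u_0$ constructed in Theorem~\ref{thm:main} as a bridge: $u_0$ is simultaneously a solution and a weak solution with $\Tr u_0=f$ on $\dOm$. The key auxiliary tool is a continuous-data analogue of Lemma~\ref{lem:ext-of-trace-global}: if $v\in\BV(\Om)$ satisfies $\Tr v=f$ $\Hcal$-a.e.\@ on $\dOm$, then extending $v$ by $f$ outside $\Om$ produces $\bar v\in\BV_\loc(X)$ with $\|D\bar v\|(\dOm)=0$, and hence $\|D\bar v\|(\overline\Om)=\|Dv\|(\Om)$. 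I would prove this by slicing: for a.e.\@ $t\in\R$, the superlevel set $\{v>t\}\cap\Om$ has trace $\chi_{F_t}$ on $\dOm$, where $F_t=\{f>t\}$; Lemma~\ref{lem:ext-of-trace-global} applies layer by layer, and integrating via the coarea formula yields the claim.

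For the direction asserting that every solution, extended by $f$, is a weak solution, let $u\in\BV(\Om)$ be a solution and $\bar u$ its extension. By the auxiliary lemma, $\|D\bar u\|(\overline\Om)=\|Du\|(\Om)$, and applying it also to $u_0$ gives $\|Du_0\|(\overline\Om)=\|Du_0\|(\Om)$. Minimality of both strong solutions forces $\|Du\|(\Om)=\|Du_0\|(\Om)$, and minimality of $u_0$ as a weak solution gives $\|Du_0\|(\overline\Om)\le\|Dw\|(\overline\Om)$ for every admissible $w\in\BV_\loc(X)$ with $w=f$ on $X\setminus\Om$. Chaining these (in)equalities yields $\|D\bar u\|(\overline\Om)\le\|Dw\|(\overline\Om)$, so $\bar u$ is a weak solution.

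For the converse direction, let $w$ be a weak solution; the crux is to establish $\Tr w=f$ on $\dOm$. I would first argue via the coarea formula that for a.e.\@ $t\in\R$, the superlevel set $\{w>t\}$ is itself a weak solution set for boundary data $\chi_{F_t}$: otherwise, replacing each level set by the minimal weak solution set from Proposition~\ref{pro:minSolutionSet-exists} and reassembling via a layer-cake construction (as in the proof of Theorem~\ref{thm:main}) would yield a competitor with strictly smaller energy on $\overline\Om$, contradicting the minimality of $w$. Then, mimicking the trace argument in the proof of Theorem~\ref{thm:main}, I apply Lemma~\ref{lem:Not-Close-to-bdry} to $\chi_{\{w>t\}}$ for $t<f(z)$ and to its complement (which is a weak solution set for $\chi_{X\setminus F_t}$) for $t>f(z)$; positive mean curvature together with the continuity of $f$ then forces $\lim_{\Om\ni y\to z}w(y)=f(z)$ for every $z\in\dOm$. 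Hence $\Tr w=f$ on $\dOm$, and for any $v\in\BV(\Om)$ with $\Tr v=f$ the extended $\bar v$ competes with $w$ in the weak sense, so
\[
  \|Dw\|(\Om) = \|Dw\|(\overline\Om) \le \|D\bar v\|(\overline\Om) = \|Dv\|(\Om);
\]
thus $w\big|_{\Om}$ solves the Dirichlet problem.

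The main obstacle is this trace identification for a general weak solution, since \emph{a priori} a weak solution only sees the boundary data off $\Om$ and need not possess a well-defined trace on $\dOm$. Extracting the optimality of each individual superlevel set from the global optimality of $w$, and then converting this layer-by-layer trace information into pointwise boundary convergence via Lemma~\ref{lem:Not-Close-to-bdry} and positive mean curvature, is where all the geometric content enters; the remaining manipulations are essentially bookkeeping with $u_0$ and the extension lemma.
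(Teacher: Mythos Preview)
Your proposal is correct and follows essentially the same route as the paper: both extract from global minimality that the superlevel sets $\{w>t\}$ are weak solution sets for a.e.\ $t$ via the coarea formula, then invoke Lemma~\ref{lem:Not-Close-to-bdry} and positive mean curvature to identify $\Tr w=f$, and handle the passage between $\|D\cdot\|(\Om)$ and $\|D\cdot\|(\overline\Om)$ by applying Lemma~\ref{lem:ext-of-trace-global} level by level through the coarea formula. The only cosmetic difference is that you package the slicing argument into a standalone ``continuous-data extension lemma'' and then compare directly against an arbitrary competitor $v$, whereas the paper compares to the specific $u$ from Theorem~\ref{thm:main} and invokes Proposition~\ref{pro:weak-to-strong-positiveCurve} at each level; the content is the same.
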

\begin{proof}
Let $F_t$, $K_t$, and $u$ be as in the proof of 
Theorem~\ref{thm:main}.

Let $w$ be a weak solution to the Dirichlet problem in $\Om$ with boundary data $f$. 
Then, $\|Dw\|(\overline{\Om}) = \|Du\|(\overline\Om)$. Define $G_t=\{x\in X : w(x)>t\}$ for $t\in\R$. 
Then, $\chi_{G_t} = \chi_{F_t}$ in $X \setminus \Om$ for every $t\in \R$. In particular, 
$P(K_t, \overline{\Om}) \le P(G_t, \overline{\Om})$ for a.e.\@ $t\in \R$ since 
$K_t$ is a minimal weak solution set for the boundary data $\chi_{F_t}$  for a.e.\@ $t\in\R$,
as seen from the discussion following~\eqref{eq:Kt-def}.
By the coarea formula, we have
\[
  \int_\R P(G_t, \overline\Om)\,dt = \|Dw\|(\overline{\Om}) = \|Du\|(\overline\Om) = \int_\R P(K_t, \overline\Om)\,dt. 
\]
Consequently, $P(G_t, \overline\Om) = P(K_t, \overline\Om)$ for a.e.\@ $t\in \R$. Hence, $G_t$ is 
a weak solution set for the boundary data $\chi_{F_t}$ for all such $t\in\R$. Observe also that 
\[
P(G_t, \Om) = P(G_t, \overline\Om) = P(K_t, \overline\Om) = P(K_t, \Om)
\]
for a.e.\@ $t\in \R$ 
by Proposition~\ref{pro:weak-to-strong-positiveCurve} together with
Lemma~\ref{lem:ext-of-trace-global}. In particular, invoking the coarea formula yields that 
$\|Dw\|(\Om) = \|Du\|(\Om)$.

Next, 
for $x\in \dOm$, if $t\in\R$ such that $f(x)>t$, then there is some $r>0$ such that $B(x,r)\subset F_t$.
Then, $B(x,\varphi(r))\cap\Om\ssub G_t$ by Lemma~\ref{lem:Not-Close-to-bdry}, which allows us to conclude that 
$Tw(x)\ge t$. It follows that $Tw\ge f$ on $\dOm$ and in fact, 
\[
\liminf_{\Om\ni y\to x}w^\wedge(y)\ge f(x)
\] 
for every $x\in \dOm$.
Reverse inequality follows in a similar manner. 
Therefore, $w$ is a solution to the Dirichlet problem for boundary data $f$ since $Tw = Tu = f$ in 
$\dOm$ and $\|Dw\|(\Om) = \|Du\|(\Om)$ as shown above, while $u$ is a solution to the said problem.

Finally, let $v\in\BV(\Om)$ be a solution to the Dirichlet problem in $\Om$ with boundary data $f$. 
Let $\tilde{v}$ be defined as the extension of $v$ to $X$ by setting it equal to $f$ outside $\Om$. By 
Lemma~\ref{lem:ext-of-trace-global}, we see that 
$P(\{\tilde{v} > t\}, \overline\Om) = P(\{v > t\}, \Om)$ for a.e.\@ $t\in\R$. The coarea formula then yields that $\tilde{v}\in \BV_\loc(X)$ and
\[
  \|D\tilde{v}\|(\overline\Om) = \int_\R P(\{\tilde{v} > t\}, \overline\Om) =  \int_\R P(\{v > t\}, \Om)\,dt = \|Dv\|(\Om).
\]
Since $u$ is a solution to the Dirichlet problem with boundary data $f$, we obtain that
\[
  \|D\tilde{v}\|(\overline\Om) = \|Dv\|(\Om) = \|Du\|(\Om) \le  \|Du\|(\overline\Om).
\]
As $u$ is also a weak solution to the Dirichlet problem with boundary data $f$, it follows that so is $\tilde{v}$.
\end{proof}
\begin{remark}
In~\cite{KLLS}, the following modified minimization problem was studied. Given a Lipschitz function $f$ with a compact support in $X$, 
the goal there was to find a function $\overline{u}\in \BV(\Om)$ such that
\[
J_+(\overline{u})\coloneq \| D\overline{u}\|(\Om)+\int_{\dOm}|T\overline{u}-f|\, dP_+(\Om,\cdot)
  \le J_+(v)
\]
for all $v\in \BV(\Om)$. 
If the domain $\Om$ has finite perimeter and satisfies an exterior measure density condition (that is,
$\limsup_{r\to 0^+}\mu(B(x,r)\setminus\Om)/\mu(B(x,r))>0$ for $\Hcal$-a.e.~$x\in\dOm$),
as well as all three conditions required for existence of a 
bounded trace operator as listed on page \pageref{enum:conditions for trace}, then the desired function 
$\overline{u} \in \BV(\Om)$ can be constructed by solving the Dirichlet problem for $p$-energy 
minimizers on $\Om$ and then letting $p\to 1^+$, see~\cite[Theorem~7.7]{KLLS}.
In fact, the solution obtained this way belongs to the global class $\BV(X)$ with 
$\overline{u}=f$ on $X\setminus\Om$.
The functional $J_+$ is related to the functional $J$ defined in~\eqref{eq:p-to-1-Jfunct}, but unlike there, 
the Radon measure $P_+$ associated with $J_+$ is the \emph{internal} perimeter measure of $\Om$. 
It was shown in \cite[Theorem~6.9]{KLLS} that $P(\Om, \cdot) \le P_+(\Om, \cdot) \le C P(\Om, \cdot)$ 
for some $C\ge 1$. 
If, in addition to the above conditions on $\Om$, the boundary $\dOm$ has positive mean curvature, 
then we can use Theorem~\ref{thm:main} to find
a weak solution $u$ to the Dirichlet problem in $\Om$ with boundary data $f$. 
Then, by~\cite[Proposition~7.5]{KLLS},
\begin{align*}
\| D\overline{u}\|(\overline{\Om})=\| D\overline{u}\|(\Om)+\int_{\dOm}|T\overline{u}-f|\, dP(\Om,\cdot)
   \le J_+(\overline{u})\le J_+(u)&=\| Du\|(\Om)\\
    &=\| Du\|(\overline{\Om}).
\end{align*}
It follows that $\overline{u}$ is a weak solution to the Dirichlet problem in $\Om$ with boundary
data $f$. 
Subsequently, by Proposition~\ref{pro:strong-sol=weak-sol}, $\overline{u}$ is a (strong) solution as well, that is, 
$T\overline{u}=f$. Therefore,
\[
J_+(u)=\| Du\|(\Om)=\| D\overline{u}\|(\Om)=J_+(\overline{u}),
\]
and so it follows that a (strong) solution to the Dirichlet problem in $\Om$ with boundary data
$f$ is also a minimizer of the functional $J_+$.

In conclusion, the class of weak solutions, the class of strong solutions, and the
class of minimizers of the functional $J_+$ coincide for domains $\Om$ that satisfy all the hypotheses given above.
\end{remark}
\section{General continuous boundary data}
\label{sec:cont-data}
The main theorem of the paper, Theorem~\ref{thm:main},
assumes that the boundary data is given as a restriction of a globally continuous $\BV(X)$
function to $\dOm$. In this section, we will prove that under certain circumstances,
every $f \in \Ccal(\dOm)$ can be extended to a
globally continuous $\BV$ function in the whole space $X$, and hence 
Theorem~\ref{thm:main} applies in such a case as well.

To this end, we will slightly modify a construction from~\cite{MSS} to find a $\BV$ extension.

\noindent{\bf Further assumptions on $\Om$:} In order
to obtain a bound on the total variation of the extended function,
one needs to assume that $\Hcal(\dOm)<\infty$, $\Hcal\lfloor_{\dOm}$ is doubling on $\dOm$,
and that the codimension~$1$
Hausdorff measure on $\dOm$ is \emph{lower codimension~$1$ Ahlfors regular}, i.e., there is $C > 0$ such that
\begin{equation}
\label{eq:lower-codim1-reg}
\Hcal(B(x,r)\cap \dOm) \ge C \,\frac{\mu(B(x,r))}{r}
\end{equation}
for every $x\in\dOm$ and $0<r<2 \diam(\dOm)$. On the other hand, apart from the last theorem of this section, we 
do not need the assumption $\mu(X\setminus\Om)>0$ from the list of standing assumptions given at the 
beginning of Section~\ref{sec:prelresults}.

The paper~\cite{MSS} further assumes that a localized converse of \eqref{eq:lower-codim1-reg} holds 
true and that $\mu$ satisfies a local measure density
property, i.e., $\mu(B \cap \Om) \ge C \mu(B)$ whenever $B$ has center in $\overline{\Om}$. 
These two properties are however
used only to prove that the trace of the extended function coincides with the given boundary data.
Since we only deal with continuous functions $f$, we
will prove directly that the extended function is continuous in $X$,
and so we can get by without these additional assumptions.

Given a set $Z\subset X$ and a (locally) Lipschitz function $f: Z \to \R$,  we define
\[
\LIP (f,Z)=\sup_{x,y\in Z\colon x\ne y}\frac{|f(y)-f(x)|}{d(y,x)}.
\]
When $x$ is a point in the interior of $Z\subset X$, we set
\[
\Lip f(x)=\limsup_{y\to x}\frac{|f(y)-f(x)|}{d(y,x)}.
\]
Note that if $f$ is a (locally) Lipschitz function on $X$, then $\Lip f$ is an upper gradient of $f$; 
see for example~\cite{Hei01}.
In particular, $\| Df\|(X)\le \| \Lip f \|_{L^1(X)}$.

By \cite[Proposition~4.1.15]{HKST}, there is a countable collection $\mathcal{W} = \{B(p_{j,i}, r_{j,i})\}$ of balls in
$X \setminus \dOm$ so that
\begin{itemize}
	\setlength{\parskip}{0.5pt plus 0.5pt minus 0.5pt}
	\setlength{\itemsep}{0.5pt plus 1pt minus 0.5pt}
	\item $\bigcup_{j,i} B_{j,i} = X \setminus \dOm$,
	\item $\sum_{j,i} \chi_{2 B_{j,i}} \le C$,
	\item $2^{j-1} < r_{j,i} \le 2^j$ for all $i$, and
	\item $r_{j,i} = \frac18 \dist(p_{j,i}, \dOm)$,
\end{itemize}
where the constant $C$ depends solely on the doubling constant of $\mu$.

By \cite[Theorem~4.1.21]{HKST}, there is a Lipschitz partition of unity subordinate to the Whitney decomposition
$\mathcal{W}$, that is, $\sum_{j,i} \phi_{j,i} = \chi_{X \setminus \dOm}$,  $0 \le \phi_{j,i} \le \chi_{2 B_{j,i}}$, and $\phi_{j,i}$ is $C/{r_{j,i}}$-Lipschitz continuous.

Let $f:\dOm\to\R$ be a Lipschitz continuous function.
Given the center of a Whitney ball $p_{j,i} \in X \setminus \dOm$, we choose a closest point $q_{j,i} \in \dOm$
and define $U_{j,i} = B(q_{j,i}, r_{j,i}) \cap \dOm$. Then, we define a linear extension $Ef$ by setting
\[
Ef(x) = \sum_{j,i}
\biggl( \fint_{U_{j,i}} f(y)\,d\Hcal(y)\biggr) \phi_{j,i} (x), \quad x \in X \setminus \dOm.
\]

We can now proceed as in \cite[Section~4]{MSS} and build
up a (non-linear) extension for general continuous boundary data.

Since $f \in \Ccal(\dOm)$, there is a sequence of Lipschitz continuous functions $\{f_k\}_{k=1}^\infty$ such that
$\| f_k-f\|_{L^\infty(\dOm)}<2^{-k}$ for $k>1$  (by the Stone--Weierstrass theorem as $\dOm$ is compact) and
$\|f_{k+1}-f_k\|_{L^1(\dOm)} \le 2^{2-k} \|f\|_{L^1(\dOm)}$. For technical reasons, we choose $f_1 \equiv 0$.
Then, we pick a decreasing sequence of real numbers $\{\rho_k\}_{k=1}^\infty$ such that:
\begin{itemize}
	\setlength{\parskip}{0.5pt plus 0.5pt minus 0.5pt}
	\setlength{\itemsep}{0.5pt plus 1pt minus 0.5pt}
	\item $\rho_1 < \diam(\Om)/2$,
	\item $0< \rho_{k+1} \le \rho_k /2$, and
	\item $\sum_k \rho_k \LIP(f_{k+1}, \dOm) \le C \|f\|_{L^1(\dOm)}$.
\end{itemize}
This sequence of numbers can be used to define layers in $X \setminus \dOm$. Let
\[
\psi_k(x) = \max\biggl\{0, \min\biggl\{1, \frac{\rho_k - \dist(x, \dOm)}{\rho_k - \rho_{k+1}} \biggr\} \biggr\}, \quad x \in X \setminus \dOm.
\]
Then, we define 
\[ 
\Ext f(x) = \begin{cases} \sum_{k=2}^\infty \bigl( \psi_{k-1}(x) - \psi_k(x)\bigr) E f_k(x) &\text{ when }x\in X \setminus \dOm,\\
  f(x)&\text{ when }x\in\dOm.\end{cases}
\] 
Note that $\supt(\psi_{k-1}-\psi_k)= \{x\in X\, :\, \rho_{k+1}\le \text{dist}(x,\dOm)\le \rho_{k-1}\}$,
and
\[
\sum_{k=2}^n \bigl(\psi_{k-1}(x)-\psi_k(x)\bigr)=\psi_1(x)-\psi_n(x) \to \psi_1(x)
\]
for every $x\in X \setminus \dOm$ as $n\to\infty$.
\begin{lem}
	\label{lem:nonlin-ext-continuous}
	Let $f \in \Ccal(\dOm)$ and $z \in \dOm$. Then,
	\[
	\lim_{X\setminus\dOm\ni x\to z}\Ext f(x)=f(z).
	\]
\end{lem}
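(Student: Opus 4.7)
The plan is to combine three ingredients: the uniform convergence $\|f_k - f\|_{L^\infty(\dOm)} \to 0$, the continuity of $f$ at $z$, and the geometric fact that the Whitney sum $Ef_k(x)$ only uses averages of $f_k$ over pieces of $\dOm$ that lie close to $z$ whenever $x$ is close to $z$.

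Fix $\eps>0$. First, pick $K\in\N$ so large that $\|f_k - f\|_{L^\infty(\dOm)} < \eps$ for every $k\ge K$, and pick $\delta_0 > 0$ so that $|f(y) - f(z)| < \eps$ whenever $y\in \dOm$ with $d(y,z)<\delta_0$. Next, observe from the definition of $\psi_k$ that $\supt(\psi_{k-1}-\psi_k)\subset \{\rho_{k+1}\le \dist(\cdot,\dOm)\le \rho_{k-1}\}$; since $\{\rho_k\}$ is strictly decreasing, this forces $k\ge K$ for every non-vanishing term $\psi_{k-1}(x)-\psi_k(x)$ as soon as $\dist(x,\dOm)<\rho_K$. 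Moreover, for $\dist(x,\dOm)<\rho_2$ the telescoping identity
\[
\sum_{k=2}^\infty \bigl(\psi_{k-1}(x)-\psi_k(x)\bigr)=\psi_1(x)-\lim_{n\to\infty}\psi_n(x)=1
\]
holds, because eventually $\rho_n<\dist(x,\dOm)$ and so $\psi_n(x)=0$. Hence, for such $x$, we have the convex-combination identity
\[
\Ext f(x)-f(z) = \sum_{k=2}^\infty \bigl(\psi_{k-1}(x)-\psi_k(x)\bigr)\bigl[Ef_k(x)-f(z)\bigr].
\]

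The second step is to bound $|Ef_k(x)-f(z)|$ uniformly in $k\ge K$ when $x$ is close to $z$. Using $\sum_{j,i}\phi_{j,i}(x)=1$ on $X\setminus\dOm$, I write
\[
Ef_k(x)-f(z) = \sum_{j,i}\phi_{j,i}(x)\biggl[\fint_{U_{j,i}}(f_k-f)\,d\Hcal + \fint_{U_{j,i}}f\,d\Hcal - f(z)\biggr].
\]
For each index $(j,i)$ with $\phi_{j,i}(x)\ne 0$, one has $d(x,p_{j,i})\le 2 r_{j,i}$ and $r_{j,i}=\tfrac{1}{8}\dist(p_{j,i},\dOm)$, so $r_{j,i}\in[\dist(x,\dOm)/10,\dist(x,\dOm)/6]$. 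Using the closest-point $q_{j,i}\in\dOm$ and the fact $\dist(x,\dOm)\le d(x,z)$, an elementary chain of triangle inequalities gives $U_{j,i}\subset B(z,C\,d(x,z))$ for a universal constant $C$. The first summand in the bracket is bounded by $\|f_k-f\|_\infty<\eps$ since $k\ge K$, and the second is bounded by $\sup_{y\in U_{j,i}}|f(y)-f(z)|<\eps$ provided $C\,d(x,z)<\delta_0$.

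Combining these estimates yields $|Ef_k(x)-f(z)|\le 2\eps$ for every $k\ge K$, whenever $d(x,z)<\min\{\rho_K,\delta_0/C\}$. Plugging into the telescoping identity and using that the weights $\psi_{k-1}(x)-\psi_k(x)$ are nonnegative and sum to $1$ gives $|\Ext f(x)-f(z)|\le 2\eps$, which proves the claim. The main obstacle I anticipate is keeping the Whitney geometry honest: one must be careful to verify that only terms with $k\ge K$ survive for $x$ near $z$ (so that the uniform approximation $\|f_k-f\|_\infty<\eps$ can be applied) and simultaneously that the sets $U_{j,i}$ actually cluster in a ball around $z$ of radius comparable to $d(x,z)$.
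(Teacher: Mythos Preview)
Your proposal is correct and follows essentially the same approach as the paper's proof: both restrict to layers with large index $k$ (so that $\|f_k-f\|_{L^\infty(\dOm)}$ is small), use the Whitney geometry to show $U_{j,i}\subset B(z,\tfrac{17}{6}\,d(x,z))$ whenever $\phi_{j,i}(x)\ne 0$, and split $Ef_k(x)-f(z)$ into the $f_k-f$ piece and the continuity piece $f(y)-f(z)$. The only difference is cosmetic ordering---you fix $K$ at the outset from $\eps$, whereas the paper carries a generic $m$ and chooses it at the end via $2^{-m}<\eps$---and your constant $C$ is exactly the paper's $\tfrac{17}{6}$.
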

\begin{proof}
Fix $z\in\dOm$ and $\eps>0$.
For $m\in\N$ with $m\ge 2$ and $x\in X\setminus\Om$ with $\text{dist}(x,\dOm)<\rho_m$, we see that
\begin{align*}
  |\Ext f(x)-f(z)|&\le \bigg| \sum_{k=m}^\infty \bigl(\psi_{k-1}(x)-\psi_k(x)\bigr)\bigl(E f_k(x)-f(z)\bigr)\bigg|\\
  &\le \sum_{k=m}^\infty \bigl(\psi_{k-1}(x)-\psi_k(x)\bigr)\bigl|E f_k(x)-f(z)\bigr|\,.
\end{align*}

Suppose that $x \in 2B_{j,i}$ for some ball $B_{j,i}=B(p_{j,i}, r_{j,i}) \in \mathcal{W}$ with $q_{j,i}$ being a 
closest point to $p_{j,i}$ in $\dOm$. Then,
	\[
	8 r_{j,i} = d(p_{j,i}, q_{j,i}) = \dist(p_{j,i}, \dOm) \le d(p_{j,i}, z) \le d(p_{j,i}, x) + d(x, z) < 2r_{j,i} + d(x, z).
	\]
	Hence $r_{j,i} < \frac16 d(x,z)$. Thus,
	\[
	d(z, q_{j,i}) \le d(z, x) + d(x, p_{j,i}) + d(p_{j,i}, q_{j,i}) < d(z,x) + 2 r_{j,i} + 8 r_{j,i} < \frac{8}{3} d(z,x).
	\]
Consequently, every $y \in U_{j,i} = B(q_{j,i}, r_{j,i})\cap \dOm$ satisfies $d(z, y) \le \frac{17}{6} d(z,x)$. 

As $f$ is continuous, there is $\delta>0$ such that $|f(y)-f(z)|<\eps$ whenever $y\in\dOm$ with $d(z,y)<\delta$.
In particular, if $x\in 2B_{j,i}$ and $d(z,x) < \frac{6}{17} \delta$, then $|f(y) - f(z)| < \eps$ whenever $y\in U_{j,i}$.
Hence, we obtain for every $x \in B(z, \frac{6}{17}\delta) \setminus \dOm$ that
	\begin{align*}
	|Ef_k(x)-f(z)|&\le \sum_{{j,i}}\fint_{U_{j,i}}|f_k(y)-f(z)|\, d\Hcal(y)\, \phi_{j,i}(x)\\
	&\le \sum_{{j,i}}\fint_{U_{j,i}}\bigl(|f_k(y)-f(y)|+|f(y)-f(z)|\bigr)\, d\Hcal(y)\, \phi_{j,i}(x)\\
	&\le \sum_{{j,i}}\bigl(\| f_k-f\|_{L^\infty(\dOm)}+\eps\bigr)\, \phi_{j,i}(x) \\
	&= \| f_k-f\|_{L^\infty(\dOm)}+\eps.
	\end{align*}
	Therefore, if $d(x,z)<\min\{\rho_m, \tfrac{6}{17}\delta\}$, we have that
	\begin{align*}
	|\Ext f(x)-f(z)|&\le \sum_{k=m}^\infty \bigl(\psi_{k-1}(x)-\psi_k(x)\bigr)\bigl(\| f_k-f\|_{L^\infty(\dOm)}+\eps\bigr)\\
	&\le \sup_{j\ge m}\bigl(\| f_j-f\|_{L^\infty(\dOm)}+\eps\bigr)\sum_{k=m}^\infty \bigl(\psi_{k-1}(x)-\psi_k(x)\bigr)\\
	&\le \sup_{j\ge m} \| f_j-f\|_{L^\infty(\dOm)}+\eps\,.
	\end{align*}
	Choosing $m > 1$ such that $2^{-m}<\eps$ then yields for
	$x\in B(z,\min\{\rho_m, \tfrac{6}{17}\delta\})\setminus\dOm$ that
	\[
	|\Ext f(x)-f(z)|\le \sup_{j\ge m} \| f_j-f\|_{L^\infty(\dOm)}+\eps < 2^{-m} + \eps <2\eps,
	\]
	which completes the proof.
\end{proof}
\begin{prop}
	\label{pro:cont-BV-extension}
	For $f \in \Ccal(\dOm)$, we have
	$\Ext f \in \Ccal(X) \cap \BV(X)$. Moreover, $\Ext f$ is compactly supported and
	\begin{align*}
	\|\Ext f\|_{L^\infty(X)} & \le \|f\|_{L^\infty(\dOm)} + 1
	\quad\text{and}\\
	\| D\Ext f\|(X) & \le C \bigl(1+\Hcal(\dOm)\bigr) \bigl( \|f\|_{L^1(\dOm)} + \|f\|_{L^\infty(\dOm)}+1\bigr).
	\end{align*}
\end{prop}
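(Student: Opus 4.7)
The sum defining $\Ext f$ is locally finite: if we set $A_k = \{x\in X\setminus\dOm:\rho_{k+1}\le\dist(x,\dOm)\le\rho_k\}$ for $k\ge 1$ and $A_0 = \{x:\dist(x,\dOm)\ge\rho_1\}$, then the support relation $\supt(\psi_{j-1}-\psi_j)\subset\{\rho_{j+1}\le\dist(\cdot,\dOm)\le\rho_{j-1}\}$ shows that on each $A_k$, at most two consecutive terms contribute, and a short computation using the convention $f_1\equiv 0$ (so $Ef_1\equiv 0$) gives
\[
\Ext f = (1-\psi_k) Ef_k + \psi_k Ef_{k+1} \quad\text{on }A_k,\quad k\ge 1,
\]
while $\Ext f\equiv 0$ on $A_0$. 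Since $\{x:\dist(x,\dOm)\le\rho_1\}$ is bounded (as $\dOm$ is compact) and $X$ is proper, $\supt\Ext f$ is compact. The $L^\infty$ bound follows because $\Ext f$ is a convex combination of $Ef_k$ and $Ef_{k+1}$ on each $A_k$, and standard properties of the linear extension give $\|Ef_k\|_{L^\infty(X)}\le\|f_k\|_{L^\infty(\dOm)}\le\|f\|_{L^\infty(\dOm)}+2^{1-k}$.

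\textbf{Continuity and approximation by Lipschitz functions.} On $X\setminus\dOm$, $\Ext f$ is locally a finite sum of Lipschitz functions (hence continuous), and at points of $\dOm$ continuity follows directly from Lemma~\ref{lem:nonlin-ext-continuous} and the definition $\Ext f|_{\dOm}=f$. The truncated sums $\Ext^n f \coloneq \sum_{k=2}^n(\psi_{k-1}-\psi_k) Ef_k$ vanish on $\{\dist(x,\dOm)\le\rho_{n+1}\}$ (since $\psi_{k-1}=\psi_k=1$ there for $k\le n$), so each $\Ext^n f$ is globally Lipschitz and compactly supported in $X\setminus\dOm$. As $\Ext^n f\to\Ext f$ pointwise and, by the $L^\infty$ bound and dominated convergence, in $L^1(X)$, lower semicontinuity of BV energy~\eqref{eq:BV-lsc} reduces the BV estimate to bounding $\liminf_n\int_X\Lip\Ext^n f\,d\mu$, or equivalently, showing that $\int_X\Lip\Ext f\,d\mu$ is controlled by the right-hand side.

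\textbf{BV estimate, layer by layer.} On each $A_k$ ($k\ge 1$), the representation above yields pointwise
\[
\Lip\Ext f \le \Lip Ef_k + \Lip Ef_{k+1} + \Lip\psi_k\cdot\|Ef_{k+1}-Ef_k\|_{L^\infty(A_k)}.
\]
Using $\sum_{j,i}\phi_{j,i}\equiv 1$ and Lipschitz continuity of $f_k$ on $\dOm$, one derives the standard Whitney-type estimate $\Lip Ef_k(x)\le C\,\LIP(f_k,\dOm)$ for every $x\in X\setminus\dOm$, by rewriting $Ef_k(x)-a_0=\sum(a_{j,i}-a_0)\phi_{j,i}(x)$ with $a_0=\fint_{U_{j_0,i_0}}f_k$ for a Whitney ball containing $x$ and exploiting that $|a_{j,i}-a_0|\le C\,\LIP(f_k,\dOm)\,r_{j,i}$ together with $\Lip\phi_{j,i}\le C/r_{j,i}$ and bounded overlap. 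Next, $\|Ef_{k+1}-Ef_k\|_{L^\infty}\le\|f_{k+1}-f_k\|_{L^\infty(\dOm)}\le 2^{2-k}(\|f\|_{L^\infty(\dOm)}+1)$ for $k\ge 1$ and $\Lip\psi_k\le 2/\rho_k$. The measure of the layer is estimated by covering $A_k$ with balls of radius $2\rho_k$ centered on $\dOm$ and invoking lower codimension~$1$ Ahlfors regularity~\eqref{eq:lower-codim1-reg} together with doubling, yielding $\mu(A_k)\le C\rho_k\Hcal(\dOm)$. Putting these together,
\[
\int_{A_k}\Lip\Ext f\,d\mu \le C\Hcal(\dOm)\bigl[\rho_k\LIP(f_{k+1},\dOm)+\rho_k\LIP(f_k,\dOm)+2^{-k}(\|f\|_{L^\infty(\dOm)}+1)\bigr].
\]
Summing in $k$ and applying $\sum_k\rho_k\LIP(f_{k+1},\dOm)\le C\|f\|_{L^1(\dOm)}$, together with $\sum_k 2^{-k}<\infty$, delivers a bound of the form $C\Hcal(\dOm)(\|f\|_{L^1(\dOm)}+\|f\|_{L^\infty(\dOm)}+1)$; the $1+\Hcal(\dOm)$ factor in the claim absorbs the degenerate case $\Hcal(\dOm)=0$ (where one treats the innermost layer separately using only the $L^\infty$ bound on $\Ext f$ and the Lipschitz constants of $\psi_k$).

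\textbf{Main obstacle.} The principal difficulty is the local Lipschitz estimate $\Lip Ef_k\le C\,\LIP(f_k,\dOm)$: the Lipschitz constants $\LIP(f_k,\dOm)$ are allowed to blow up as $k\to\infty$, and the subtlety lies in choosing the layer thicknesses $\rho_k$ so that the product $\rho_k\LIP(f_{k+1},\dOm)$ is summable with sum controlled by $\|f\|_{L^1(\dOm)}$; this is precisely what the third defining property of $\{\rho_k\}$ ensures. A secondary technicality is confirming that the measure of a layer of width $\rho_k$ adjacent to $\dOm$ is indeed of order $\rho_k\Hcal(\dOm)$, which is where the lower codimension~$1$ regularity~\eqref{eq:lower-codim1-reg} is indispensable.
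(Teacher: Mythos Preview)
Your proposal is correct but organized differently from the paper. The paper's proof is shorter because it outsources the key $L^1$-estimate on $\Lip(\Ext f)$ over $X\setminus\dOm$ to \cite[Proposition~4.2]{MSS}, obtaining $\|\Lip(\Ext f)\|_{L^1(X\setminus\dOm)}\le C(1+\Hcal(\dOm))\|f\|_{L^1(\dOm)}$ as a black box. It then approximates $\Ext f$ by globally Lipschitz functions via a cutoff: covering $\dOm$ by finitely many small balls $D_\ell$, setting $\eta_n(x)=\min\{1,\dist(x,D_\ell)/\rho_\ell\}$, and taking $F_n=\eta_n\Ext f$. The Leibniz rule splits $\Lip F_n$ into the term controlled by the cited estimate and a boundary term $|\Ext f|\Lip\eta_n$, which contributes $C(\|f\|_{L^\infty(\dOm)}+1)\Hcal(\dOm)$; lower semicontinuity then finishes.

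You instead re-derive the Lipschitz estimate from scratch, layer by layer, using the pointwise bound $\Lip Ef_k\le C\,\LIP(f_k,\dOm)$ and the volume bound $\mu(A_k)\le C\rho_k\Hcal(\dOm)$, and you approximate by truncated partial sums $\Ext^n f$ rather than by a cutoff. Your route is self-contained and exploits the explicit layered structure of $\Ext f$; the paper's cutoff argument is more modular and would apply to any bounded locally Lipschitz function with the stated $L^1$-gradient bound. Note that your final constant $C\Hcal(\dOm)$ is actually sharper than the paper's $C(1+\Hcal(\dOm))$; the degenerate case $\Hcal(\dOm)=0$ cannot occur here, since lower codimension~$1$ regularity~\eqref{eq:lower-codim1-reg} and doubling force $\Hcal(\dOm)>0$ whenever $\dOm\neq\emptyset$, so your parenthetical about that case is unnecessary.
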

\begin{proof}
	$\Ext f$ is
	locally Lipschitz in $X\setminus \dOm$ by its construction. Lemma~\ref{lem:nonlin-ext-continuous}
	shows that $\Ext f$ is continuous on $X$. 
	The fact that $\Ext f$ is compactly supported follows from $\supt \Ext f \subset \supt \psi_1$,
	which is bounded and hence compact.
	The estimate $\|\Ext f\|_{L^\infty(X)} \le \|f\|_{L^\infty(\dOm)} + 1$ follows directly from
	the definition of $\Ext f$ together with the requirements on the functions $f_k$ that went into its definition.
	
	It follows from \cite[Proposition~4.2]{MSS} that $\Ext f \in \BV(X \setminus \dOm)$ with the estimate
	\begin{equation}\label{eq:Lip Ext f estimate}
	\|\Lip (\Ext f)\|_{L^1(X\setminus \dOm)} \le C \bigl(1+\Hcal(\dOm)\bigr) \|f\|_{L^1(\dOm)}.
	\end{equation}
	Fix $n \in \N$. We can cover $\dOm$ by finitely many balls $\{D_\ell: \ell=1,2,\ldots \}$ of radii
	$\rho_\ell < \frac1n$ so that
	\[
	\sum_\ell \frac{\mu(D_\ell)}{\rho_\ell} < \Hcal(\dOm) + \frac1n.
	\]
	Let
	\[
	\eta_n(x) = \min\biggl\{ 1, \frac{\dist(x, D_\ell)}{\rho_\ell} \colon \ell = 1,2,\ldots \biggr\}, \quad x\in X.
	\]
	Then,
	\[
	\Lip \eta_n \le \sum_\ell \frac{1}{\rho_\ell}\chi_{2\overline{D}_\ell \setminus D_\ell}.
	\]
	Set $F_n = \eta_n \Ext f$. Since $F_n = 0$ in a neighborhood
	of $\dOm$, it follows that $F_n$ is Lipschitz continuous. The Leibniz rule for (locally) Lipschitz functions yields that
	\begin{align*}
	\Lip F_n  
	&\le \eta_n \Lip (\Ext f) + |\Ext f| \Lip \eta_n\\
	& \le \chi_{X \setminus \bigcup_\ell D_\ell} \Lip (\Ext f)
	+  (\|f\|_{L^\infty(\dOm)} + 1)\sum_\ell  \frac{\chi_{2\overline{D}_\ell \setminus D_\ell}}{\rho_\ell}.
	\end{align*}
	Thus
	\begin{align*}
	\|DF_n\|(X)
	&\le \int_X \Lip F_n\,d\mu\\
	& \le \| \Lip (\Ext f)\|_{L^1(X\setminus\dOm)}
	+ (\|f\|_{L^\infty(\dOm)} + 1)\sum_\ell \frac{\mu(2\overline{D}_\ell \setminus D_\ell)}{\rho_\ell} \\
	& \le C \bigl(1+\Hcal(\dOm)\bigr) \|f\|_{L^1(\dOm)}
	+ C_D^2 (\|f\|_{L^\infty(\dOm)} + 1) \biggl(\Hcal(\dOm) + \frac 1n\biggr)
	\end{align*}
	by \eqref{eq:Lip Ext f estimate}.
	
	Direct computation shows that  $F_n \to \Ext f$ in $L^1(X)$ as $n \to \infty$. The lower semicontinuity of
	$\BV$ energy as in~\eqref{eq:BV-lsc} then implies that
	\[
	\|D(\Ext f)\|(X) \le \liminf_{n\to\infty} \|D F_n\|(X) \le 
	C \bigl(1+\Hcal(\dOm)\bigr) (\|f\|_{L^1(\dOm)}
	+\|f\|_{L^\infty(\dOm)} + 1).
	\qedhere
	\]
\end{proof}
Recall that we assume $\Om$ and $X$ to satisfy all the standing assumptions listed in Section~\ref{sec:prelresults} and the	further assumptions listed at the beginning of the current section.
\begin{thm}\label{thm:main_cont-data}
	Suppose that $\dOm$ has positive mean curvature.
	Let $f \in \Ccal(\dOm)$. Then, there is a function $u\in \BV(\Om)$ that is a solution to the Dirichlet
	problem in $\Om$ with boundary data $f$.
	Furthermore,
	\[
	\lim_{\Om\ni x\to z}u(x)=f(z)
	\]
	whenever $z\in \dOm$.
\end{thm}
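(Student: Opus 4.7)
The plan is to reduce Theorem~\ref{thm:main_cont-data} to the already-established Theorem~\ref{thm:main} by producing a globally continuous $\BV(X)$ extension of the boundary datum $f$. The bulk of the work for this reduction has been carried out in the preceding material of this section, so the proof itself should be short.

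First, I would apply Proposition~\ref{pro:cont-BV-extension} to the given $f \in \Ccal(\dOm)$ to obtain a function $\tilde f \coloneq \Ext f$. This proposition guarantees that $\tilde f$ is compactly supported, lies in $\Ccal(X) \cap \BV(X)$, and by the very definition of the extension operator $\Ext$ we have $\tilde f \equiv f$ on $\dOm$. In particular $\tilde f \in \BV_{\loc}(X) \cap \Ccal(X)$, which places it exactly within the hypothesis class of Theorem~\ref{thm:main}.

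Next, I would invoke Theorem~\ref{thm:main} with boundary data $\tilde f$. Since we are assuming $\Hcal(\dOm)<\infty$ and that $\dOm$ has positive mean curvature (these are the standing hypotheses of the current theorem), Theorem~\ref{thm:main} produces a solution $u \in \BV_\loc(X)$ to the Dirichlet problem in $\Om$ with boundary data $\tilde f$, whose restriction to $\Om$ lies in $\BV(\Om)$ (this follows from $\|Du\|(X)<\infty$ combined with $\mu(\Om)<\infty$) and which satisfies
\[
\lim_{\Om \ni y \to z} u(y) = \tilde f(z) \quad \text{for every } z \in \dOm.
\]
Because $\tilde f(z) = f(z)$ on $\dOm$, this limit identity is precisely the desired boundary behavior $\lim_{\Om \ni x \to z} u(x) = f(z)$.

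Finally, to verify that $u$ is a solution in the sense of Definition~\ref{def:strong-solutions} relative to the original boundary datum $f$, note that any competitor $v \in \BV(\Om)$ with $\Tr v = f$ $\Hcal$-a.e.\ on $\dOm$ is simultaneously a competitor against $u$ in the Dirichlet problem with boundary data $\tilde f$, since the trace only sees values on $\dOm$. Hence $\|Du\|(\Om) \le \|Dv\|(\Om)$ inherits directly from Theorem~\ref{thm:main}. There is no real obstacle here: all of the analytic difficulty (the construction of a continuous $\BV$-extension from a purely continuous datum on $\dOm$) has already been absorbed into Proposition~\ref{pro:cont-BV-extension}, which is precisely why the further geometric assumptions on $\Om$ (doubling of $\Hcal\lfloor_{\dOm}$ and lower codimension-$1$ Ahlfors regularity) were imposed at the beginning of this section.
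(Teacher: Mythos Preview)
Your proposal is correct and follows essentially the same approach as the paper: apply Proposition~\ref{pro:cont-BV-extension} to extend $f$ to a function in $\Ccal(X)\cap\BV(X)$, then invoke Theorem~\ref{thm:main}. Your write-up is in fact more detailed than the paper's (which dispatches the proof in two sentences), but the additional verification that a solution for $\tilde f$ is automatically a solution for $f$ is a reasonable clarification.
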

\begin{proof}
	By Proposition~\ref{pro:cont-BV-extension}, there is a bounded function
	$\Ext f \in \Ccal(X) \cap \BV(X)$ such that $f = (\Ext f)\bigr|_{\dOm}$. Hence, we can apply
	Theorem~\ref{thm:main} to the boundary data $\Ext f$.
\end{proof}
\section{Counterexamples}
Unlike in~\cite{SWZ}, solutions to the Dirichlet problem may fail to be continuous even if the boundary data are
Lipschitz continuous. Moreover, uniqueness of the solutions cannot be guaranteed either. In this section we illustrate
these issues with a series of examples in the plane with a weighted Lebesgue measure $d\mu = w \,d\Leb2$ on a
domain $\Omega\subset \R^{2}$. 
The two principal examples are Example~\ref{exa:lightdiamondtight} and
Example~\ref{exa:litedmdheavycore} with continuous weights $w$, the first demonstrating the failure of continuity 
of the solution all the way up to the boundary, 
and the second demonstrating non-uniqueness. To make these two examples easier to visualize, we also 
provide preliminary examples with piecewise constant weights, giving
 simpler illustrations of discontinuity and non-uniqueness.

In the settings considered in this section, sets whose characteristic functions are
of least gradient have boundaries which are shortest
paths with respect to a weighted distance. Hence we first investigate shortest paths. This is the 
aim of the next subsection. Once this is done, we continue on in the subsequent subsection to describe the examples.
\subsection{Minimal perimeter of sets in weighted Euclidean setting, and length with respect to weights}
Suppose $w>0$ is a continuous weight on $\R^{2}$ for which $d\mu = w \,d\Leb2$ is doubling and satisfies a
$1$-Poincar\'e inequality. According to Corollary 2.2.2 and Theorem 3.2.3 of \cite{Cam}, if $E\subset \Om$ is measurable,
then
\[
P_{w}(E,\Om)=\int_{\Om\cap \partial_{m}E} w\, d\Hcal^{1},
\]
where $P_{w}$ indicates perimeter with respect to $\mu$ and $\partial_{m}E$ is the measure theoretic boundary
with respect to $d\Leb{2}$. While some of the weights considered in this section are not continuous, they are piecewise continuous, and the discontinuity set is contained
in a piecewise smooth $1$-dimensional set, where the weight will be lower semicontinuous.
Hence, a simple argument shows that the above-stated conclusion of~\cite{Cam} holds here as well.

For a weight $w$ on $\mathbb{R}^{2}$ 
and a Lipschitz path $\phi\colon [0,1]\to \mathbb{R}^{2}$,
the weighted length of $\phi$ is  
\[
I(\phi,w)\coloneq \int_{0}^{1} |\phi'(t)|w(\phi(t))\,dt. 
\]
By the discussion above, this weighted length is the perimeter measure
of the set whose  boundary is given by the trajectory of $\phi$; note that such $\phi$ is injective	$\Leb1$-a.e.\@ in $[0,1]$.
In this setting, the shortest path is one that minimizes weighted length.

For every open set $G \subset \Om$ where the weight function $w$ is constant and each set $E$ of 
minimal boundary surface, the connected components of $\partial_mE \cap G$ are straight line segments. 
This follows because the shortest paths (with respect to weighted length) inside $G$ are Euclidean 
geodesics. 

We next consider a simple weight.
\begin{exa}[{\bf Ibn Sahl--Snell's law}]\label{ncaseweight}\end{exa}
\vspace{-\topsep}
\begin{wrapfigure}{r}{60mm}
  \vspace{-30pt}
  \hfill
  \includegraphics[width=52mm,height=52mm,page=3]{\figfilename}\ 
\end{wrapfigure}
\noindent
First suppose $w_{1}, w_{2}>0$ and $0<y_{1}<y_{2}$. Let $w$ be a weight function with $w(x,y)=w_{1}$ if 
$-y_{1}\leq y< 0$ while $w(x,y)=w_{2}$ if $-y_{2}< y<-y_{1}$. We seek the shortest path with respect to $\mu$ 
joining $(0,0)$ to a point $(x_{2},-y_{2})$. Since the weight is piecewise constant, the shortest path is the 
concatenation of line segments $L_{1}$ and $L_{2}$ in the regions $-y_{1}<y<0$ and $-y_{2}<y<y_{1}$ respectively. 
If these lines meet the line $y=-y_{1}$ at a point $(x,-y_{1})$, then the weighted length is
\[
 d(x)=w_{1}\sqrt{x^{2}+y_1^2}+w_{2}\sqrt{\bigl(x-x_{2}\bigr)^{2}+\bigl(y_{2}-y_{1}\bigr)^{2}}.
\]
The derivative of this length is then
\[d'(x)=\frac{w_{1}x}{\sqrt{x^{2}+y_{1}^{2}}}+\frac{w_{2}(x-x_{2})}{\sqrt{(x-x_{2})^{2}+(y_{2}-y_{1})^{2}}}.\]
Let $\theta_{1}$ and $\theta_{2}$ be the acute angles $L_{1}$ and $L_{2}$ make with the vertical at $(x,-y_{1})$. 
Let $A_1$ and $A_2$ be the Euclidean lengths of $L_{1}$ and $L_{2}$, that is,
$A_1=\sqrt{x^2+y_1^2}$ and $A_2=\sqrt{(x-x_2)^2+(y_2-y_1)^2}$. Then the equation $d'(x)=0$ gives
\[
\frac{w_{1}A_1\sin(\theta_{1})}{A_1}-\frac{w_{2}A_2\sin(\theta_{2})}{A_2}=0.
\]
Either $\theta_1=\theta_2=0$ or rearranging gives the Ibn Sahl--Snell Law \cite{Ras}:
\[\frac{w_{1}}{w_{2}}=\frac{\sin(\theta_{2})}{\sin(\theta_{1})}.\]

Now suppose $0<y_{1}<y_{2}<\cdots <y_{n}$ and $w$ is a weight with $w(x,y)=w_{k}$ if $-y_{k}<y\leq -y_{k-1}$. 
We seek the shortest path joining $(0,0)$ to a point $(x_{n},-y_{n})$. This is a concatenation of lines 
$L_{1}, \ldots, L_{n}$, where $L_{k}$ is the segment in the region $-y_{k}<y<-y_{k-1}$. Let $\theta_{k}$ be 
the acute angle the line $L_{k}$ makes with the vertical. Applying the previous case gives
\[\frac{\sin(\theta_{k})}{\sin(\theta_{k-1})}=\frac{w_{k-1}}{w_{k}} \qquad \mbox{for }2\leq k\leq n.\]
Hence,
\[
\frac{\sin(\theta_{n})}{\sin(\theta_{1})}=\frac{\sin(\theta_{n})}{\sin(\theta_{n-1})}\cdot
\frac{\sin(\theta_{n-1})}{\sin(\theta_{n-2})}
\cdots
\frac{\sin(\theta_{2})}{\sin(\theta_{1})}
= \frac{w_{n-1}}{w_{n}}\cdot \frac{w_{n-2}}{w_{n-1}}\cdots \frac{w_{1}}{w_{2}}
= \frac{w_{1}}{w_{n}}\,.
\]

The situation is similar for weights which linearly interpolate between two values.
\begin{exa}[{\bf Ibn Sahl--Snell's law for continuous weights}]\label{interpolateweight}
Let us suppose that $0<z_{1}<z_{2}<z_{3}$ and $w$ is a weight with $w(x,y)=w_{1}$ for $-z_{1}<y<0$, 
$w(x,y)=w_{2}$ for $-z_{3}<y<-z_{2}$, and
\[
w(x,y)=w_{1}\left(\frac{y+z_{2}}{z_{2}-z_{1}}\right) - w_{2}\left(\frac{y+z_{1}}{z_{2}-z_{1}}\right)
\]
for $-z_{2}<y<-z_{1}$.
We seek the path $\gamma$ which minimizes the weighted length $I(\phi,w)$, over Lipschitz paths 
$\phi$ with $\phi(0)=0$ and $\phi(1)=(x_{3},-z_{3})$ for a fixed point $x_{3}$.

One can choose a natural family of weights $w_{n}$ of the type considered in Example \ref{ncaseweight}, agreeing 
with $w$ for $y>-z_{1}$ and $y<-z_{2}$, with $w_{n}\to w$ uniformly. Choose Lipschitz curves 
$f_{n}\colon [0,1]\to \mathbb{R}^{2}$ which minimize $I(\phi, w_{n})$ among Lipschitz curves $\phi$ with 
$\phi(0)=0$ and $\phi(1)=(x_{3},-z_{3})$. Then $I(f_{n},w_{n})\leq I(\phi,w_{n})$ for any competitor $\phi$. 
The curves $f_{n}$ converge to a Lipschitz curve $f$ joining the desired points and $I(f, w)\leq I(\phi, w)$ for 
any competitor $\phi$, so $f$ minimizes the weighted length with respect to $w$.

Since $w$ and $w_{n}$ agree and are constant for $y>-z_{1}$ and $y<-z_{2}$, the path $f$ consists of straight 
lines $L_{1}$ and $L_{2}$ in those regions. Applying Example \ref{ncaseweight} to each $w_{n}$ and letting 
$n\to \infty$, we see that if $\theta_{1}$ and $\theta_{2}$ are the acute angles made between the 
vertical and $L_{1}, L_{2}$ respectively, then
\[
\frac{\sin(\theta_{2})}{\sin(\theta_{1})}= \frac{w_{1}}{w_{2}},
\]
regardless of what happens in the region $-z_{2}<y<-z_{1}$.
\end{exa}
\subsection{Examples of discontinuity and nonuniqueness}
We now illustrate the failure of continuity and uniqueness of solutions. In a series of examples we 
consider the domain $\Om = B(0,1)$ in the Euclidean plane $\R^2$ endowed 
with various weighted Lebesgue measures of the form $d\mu = w \,d\Leb2$.

In Examples~\ref{exa:heavydiamond}--\ref{exa:litedmdheavycore}, Dirichlet boundary data 
are defined as $f(x,y) = y+1$, i.e. the vertical distance from the lowest point of $\dOm$.

We applied the main idea of the proof of
Theorem~\ref{thm:main} to construct a function $u: \Om \to [0,2]$ that 
is a solution to the Dirichlet problem in $\Om$ with boundary data $f$.
For each 
$t \in [0,2]$, we constructed a set $E_t$ of minimal boundary surface in $\Omega$ so that 
$\Tr \chi_{E_t}=\chi_{\{f>t\}}$ on $\partial\Omega$. Moreover, we ensured that the sets 
$E_t$ are nested so that $E_t \subset E_s$ whenever $s<t$.  In the proof of 
Theorem~\ref{thm:main}, the desired nesting of the sets $E_t$ was 
achieved by choosing minimal weak solution sets for $\chi_{\{f>t\}}$. 
However, that need not be the only way to obtain the nesting 
to be able to construct a solution (cf.\@ commentary on 
maximal weak solution sets at the end of 
Section~\ref{sec:prelresults}).

The principal part of the work in these examples is to identify 
$\partial E_t$, as $E_t$ is the connected component of $\Om$ 
lying above $\partial E_t$. Since $E_t$ is of minimal 
boundary surface, $\partial E_t$ is the shortest 
path that connects the points of 
$\partial\{z\in\dOm \colon f(z) > t\}$, 
which turns out to be the points of $\dOm$ with $y$-coordinate equal to $t-1$. 
For piecewise constant weights $w$, the superlevel set (which is of minimal boundary surface) has as its 
boundary a concatenation of straight line segments in $\Omega$. The weighted length of this concatenated path is the 
shortest amongst all paths in $\Om$ with the same endpoints. This follows from the Ibn Sahl--Snell law described above.
\subsubsection*{Discontinuous solutions for the least gradient problem:
the Eye of Horus}
\begin{exa}
\label{exa:heavydiamond}
Having fixed a constant $\alpha >1$, let the weight be given by
\[
  w(x,y) =
  \begin{cases}
    \alpha & \text{if } |x|+|y| <  \frac12,\\ 
    1 &  \text{otherwise.}
  \end{cases}
\]
For the sake of brevity, let $K = \{(x,y)\in \Om\colon |x|+|y| < \frac12\}$. 

(a) First, we consider the case when $\alpha \ge 3/\sqrt{5}$.

We start by finding the superlevel set $E_{1}$ of the solution, which corresponds to the value of the boundary data $f(x,y)=y+1$ at $(-1,0)$ (or equivalently at $(1,0)$). The boundary
of this superlevel set is a path obtained as a concatenation of line segments in $\Om$. If the line segment from $(-1,0)$ intersects the left-hand side of
the boundary of $K$ at the point $(t-1/2,t)$ for some $0<t<1/2$, then the path continues inside $K$. By considerations of
symmetry, it is clear that the part of the path inside $K$ is then parallel to the $x$-axis, and then exits $K$ at the point $(1/2-t,t)$ to continue on to the point $(1,0)$. In this case, the weighted length of this path, and hence the perimeter measure of the superlevel set, is
\[
 g(t)=2\sqrt{t^{2}+\left(t+\frac{1}{2}\right)^2}+\alpha(1-2t).
\]
Note that
\[
g'(t)=\frac{4t+1}{ \sqrt{t^{2}+\left(t+\frac{1}{2}\right)^2} }-2\alpha,
\]
and if $\alpha\ge 3/\sqrt{5}$, then $g'(t)<0$ for all $t\in(0, 1/2)$. Thus, one reduces weighted length by moving the point where the path intersects $K$ up to $(0,1/2)$. Similarly, if the path intersects the boundary of $K$ at $(-t-1/2,t)$ for some $t<0$, then the weighted length is reduced by moving the point where the path intersects $K$ down to $(0,-1/2)$.

Hence the boundary of the superlevel set $E_1$ of the solution is given by two paths. Each path is a concatenation of two line segments, one connecting
$(-1,0)$ to $(0,\pm 1/2)$ and the other connecting $(0,\pm 1/2)$ to $(1,0)$. This identifies $\partial E_{1}$. Clearly $\partial E_{1/2}$ and $\partial E_{3/2}$ respectively are the pieces of the line segments $y=-1/2$ and $y=1/2$ inside $\Omega$.
Analysis similar to the above
enables us to identify all the superlevel sets of the solution to the Dirichlet problem with boundary data $f$.

Having identified $\partial E_t$, we can construct the solution by stacking the sets $E_t$ as in Theorem~\ref{thm:main}. See the figure below. The exact value of $\alpha \ge 3/\sqrt 5$ plays no role here.

\noindent%
\begin{minipage}{\linewidth}
\vspace{\topsep}
\center
\includegraphics[height=2.0in,page=4]{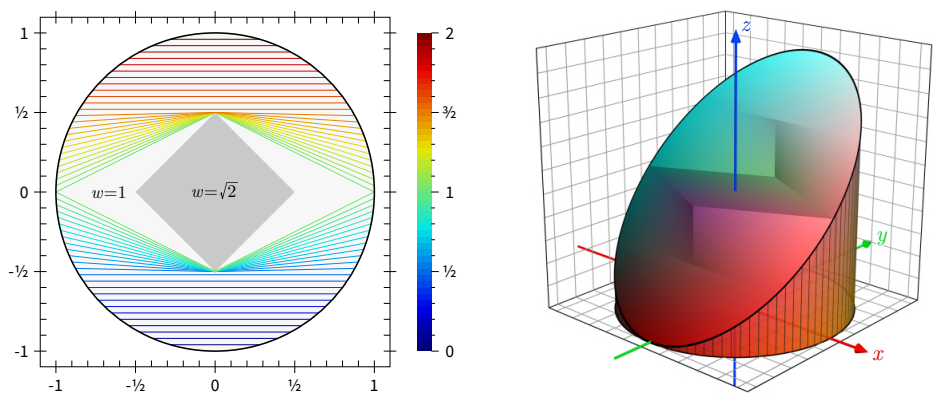}

\captionof{figure}{Level sets $\partial E_t$ for various values of $t \in (0,2)$ in Example~\ref{exa:heavydiamond}\,(a) are shown on the left. A graph of the solution $u$ constructed by stacking the sets $E_t$ as in Theorem~\ref{thm:main} is shown on the right.\label{fig:heavydiamond}}
\vspace{\topsep}
\end{minipage}

(b) Consider now the case where $1 < \alpha < 3/\sqrt{5}$. 

The boundary of the superlevel set with endpoints $(-1,0)$ and $(1,0)$ still consists of two shortest paths, each a concatenation of line segments in $\Om$. Suppose the upper shortest path first meets the diamond at a point $(t-1/2, t)$ for some $0\leq t\leq 1/2$. Since $\alpha < 3/\sqrt{5}$, the function $g'$ above is no longer always decreasing, so the shortest path actually enters the diamond (i.e. $t<1/2$). The angle under which the shortest path enters the diamond is determined by the Ibn Sahl--Snell law, i.e., by the relation $\sin \theta / \sin \frac{\pi}{4} = \alpha$, where $\theta$ is the angle of incidence on the contour of the diamond. Since $\alpha>1$, it follows $\theta>\pi/4$ and hence $t>0$. By symmetry with respect to the $x$-axis, the lower shortest path meets the diamond at a point $(-1/2-t,t)$ with $t<0$. 

Hence, the boundary of the superlevel set $E_1$ consists of two paths, one in the upper half-disk and the other in the lower half-disk, both passing parallel to the $x$-axis while
moving through the central diamond region. These paths forms an oblique hexagonal region in which the solution function is constant. The solution remains continuous in this region, but again exhibits a jump at the top and bottom tips of the central diamond region, see the figure below.

\noindent%
\begin{minipage}{\linewidth}
\vspace{\topsep}
\center
\includegraphics[width=5.8in,height=2.4in,page=5]{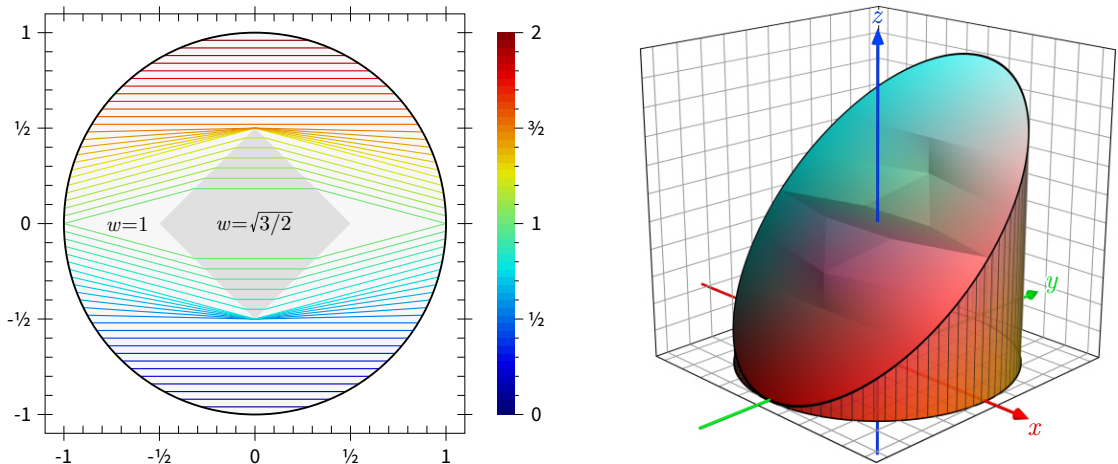}

\captionof{figure}{Level sets $\partial E_t$ for various values of $t \in (0,2)$ in 
Example~\ref{exa:heavydiamond}\,(b), with $\alpha = \sqrt{3/2}$, are shown on the left. 
A graph of the solution $u$ constructed by stacking the sets $E_t$ as in 
Theorem~\ref{thm:main} is shown on the right. Note that the displacement of the level 
sets from the center depends on $\alpha$. \label{fig:heavydiamondB}}
\end{minipage}
\end{exa}
In the example above, the set of points of discontinuity for the solution consisted 
of two points, so had Hausdorff dimension 0. The following example gives a weight for which the set 
of points of discontinuity has Hausdorff dimension 1.
\begin{exa}
\label{exa:heavydisk}
Having fixed a constant $\alpha \ge \pi/2$, let the weight be given by
\[
  w(x,y) =
  \begin{cases}
    \alpha & \text{if } |x|^2+|y|^2 \le \frac14, \\
    1 &  \text{otherwise.}
  \end{cases}
\]
For the sake of brevity, let $K = \{(x,y)\in\Om\colon |x|^2+|y|^2 \le \frac14\}$.

Observe that the choice of $\alpha$ guarantees that the shortest path that connects two points on $\partial K$ is an 
arc lying entirely in $\partial K$. Indeed, let $z_1, z_2 \in \partial K \subset \C$ and $\theta = |\arg \frac{z_1}{z_2}|$. 
Then, the line segment going straight through $K$ that connects these two points has length $2\alpha \sin \frac{\theta}{2}$ whereas the shorter arc in $\partial K$ has length 
$\theta \le 2\alpha \sin \frac{\theta}{2}$ and the inequality is strict unless 
$\theta = 0$ (i.e., $z_1=z_2$) or $\theta = \pi$ (i.e., $z_1 = -z_2$) and $\alpha = \frac{\pi}{2}$.

If $|t-1|\ge\frac12$, then the shortest path connecting the two points of $\{z\in\dOm\colon f(z)=t\}$ is a 
horizontal straight line segment.
If $|t-1| < \frac12$, then the shortest path connecting the two points of $\{z\in\dOm\colon f(z)=t\}$ reaches 
$\partial K$ tangentially, then follows the arc of $\partial K$ to the highest/lowest point of $\partial K$ and then 
continues symmetrically with respect to the axis $x=0$.

\noindent%
\begin{minipage}{\linewidth}
\vspace{2\topsep}
\center
\includegraphics[width=5.8in,height=2.4in,page=6]{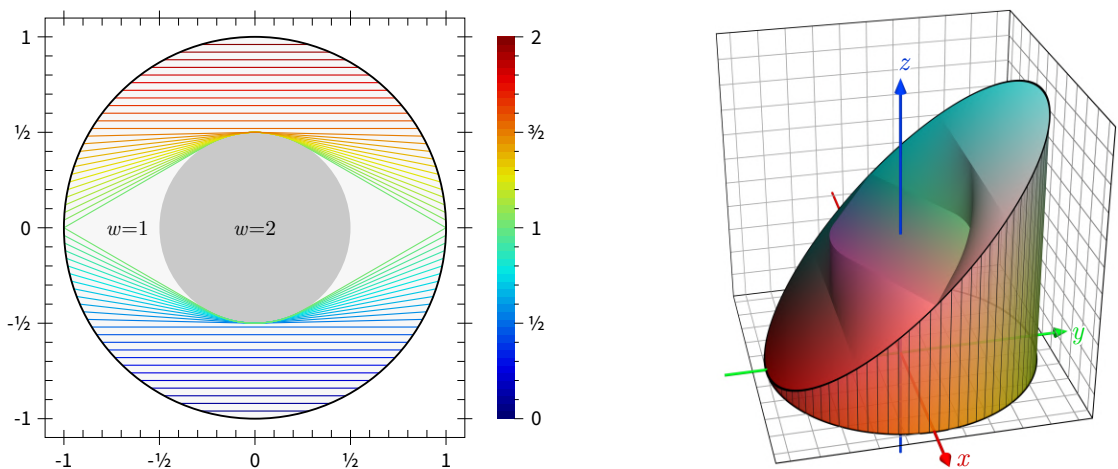}

\captionof{figure}{Level sets $\partial E_t$ for various values of $t \in (0,2)$ in Example~\ref{exa:heavydisk}, with 
$\alpha = 2 > \pi/2$, are shown on the left. A graph of the solution $u$ constructed by stacking the 
sets $E_t$ is shown on the right. \label{fig:heavydisk}}
\end{minipage}
\vspace{0.5\topsep}
\end{exa}
A natural question is whether the discontinuity of solutions could perhaps be caused by the discontinuity of the weight function. 
The next example shows that that is not the case. The following example serves a second purpose as well. Note that
the above examples are of domains where the space is positively curved (for example, in the sense of Alexandrov) inside the domain.
In the next example, the space is negatively curved inside the domain.
\begin{exa}
\label{exa:lightdiamond}
Having fixed a constant $\alpha \in (0, 1)$, let the weight be given by
\[
  w(x,y) =
  \begin{cases}
    \alpha & \text{if } |x|+|y| \le 0.5, \\
    \alpha + \frac{1-\alpha}{0.05} (|x|+|y| - 0.5) & \text{if } 0.5 < |x|+|y| \le 0.55, \\
    1 &  \text{otherwise.}
  \end{cases}
\]
We now check that for $t$ with $t-1>0$ sufficiently small, the $t$-level sets $E_t$ intersect on the central horizontal line of the 
inner diamond. This will result in discontinuity of the corresponding solution on this horizontal segment. Towards this end, 
suppose $\gamma$ is the level set for some $t>1$. Then $\gamma$ is a minimizing curve (with respect to the length 
induced by $w$) joining the two points on $\partial \Om$ with $y$-coordinate $t-1>0$. By symmetry, the portion of 
$\gamma$ in the inner diamond is a horizontal line. We now consider two cases.

If $\gamma$ meets the line $y=0$ then, by symmetry, this first occurs at a point $(x,0)$ with $x\leq -0.5$. It then stays 
horizontal and follows the central horizontal line of the inner diamond until the point $(-x, 0)$, after which $\gamma$ 
increases linearly to meet the unit circle at $y$-coordinate $t-1$.

Suppose $\gamma$ does not meet the line $y=0$. The curve $\gamma$ in the region $x\leq 0, \, y\geq 0$ consists of three 
pieces: a line $L_{1}$ outside the outer diamond $|x|+|y|>0.55$, a line $L_{2}$ inside the inner diamond $|x|+|y|\leq 0.5$, 
and a third piece inside the annulus $0.5<|x|+|y|\leq 0.55$. In the region $x\leq 0, \, y\geq 0$, the weight $w$ is a rotated 
copy of a weight $w_{0}$ from Example \ref{interpolateweight}. The curve $\gamma$ is also minimal with respect to the 
length induced by $w_{0}$. Let $\theta$ be the acute angle $L_{1}$ makes with the direction $(-1,1)$. Since $L_{2}$ is 
horizontal, we know $L_{2}$ makes acute angle $\pi / 4$ with the direction $(-1,1)$. The discussion of 
Example~\ref{interpolateweight} implies that
\[\frac{\sin(\theta)}{\sin(\pi/4)}=\alpha.\]
Hence $\sin(\theta)=\alpha/\sqrt{2}$. If $\alpha<1$ then $\theta$ is bounded away from $\pi/4$. Hence 
$L_{1}$ intersects $\partial \Om$ at a point whose $y$-coordinate is bounded away from $0$, so $t-1$ is 
bounded away from $0$ (with a bound depending only on $\alpha$).

Hence if $t-1>0$ is sufficiently small, the first case applies and $\gamma$ must follow the central horizontal line of the inner diamond.

\noindent%
\begin{minipage}{\linewidth}
\vspace{\topsep}
\center
\includegraphics[width=5.8in,height=2.4in,page=7]{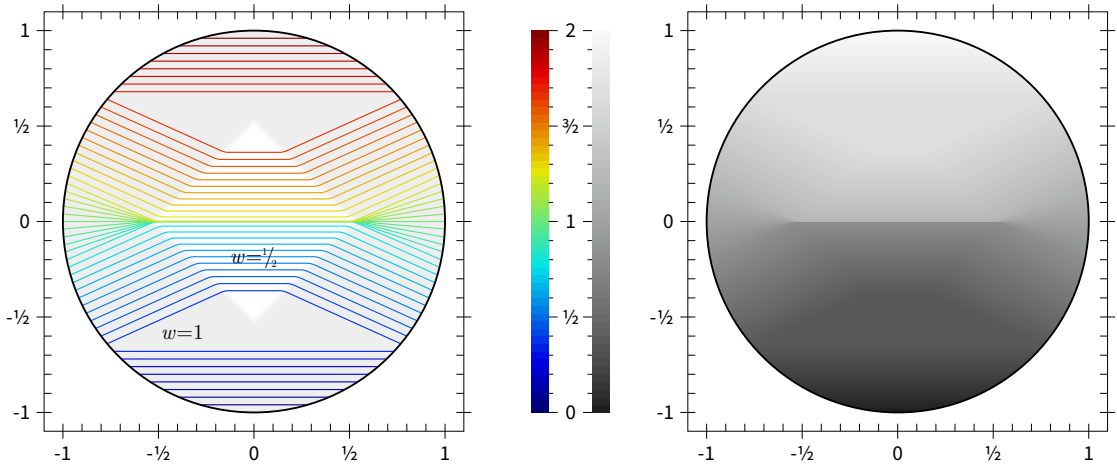}

\captionof{figure}{Level sets $\partial E_t$ for values of $t \in (0,2)$ in Example~\ref{exa:lightdiamond}, with $\alpha = 1/2$, are on the left. A heightmap of the solution $u$ constructed by stacking the sets $E_t$ is on the right, where the grayscale intensity represents the solution's function value. 
\label{fig:lightdiamond}}
\end{minipage}

\vspace{0.5cm}
\noindent%
\begin{minipage}{\linewidth}
\vspace{\topsep}
\center
\includegraphics[width=5.8in,height=2.4in,page=8]{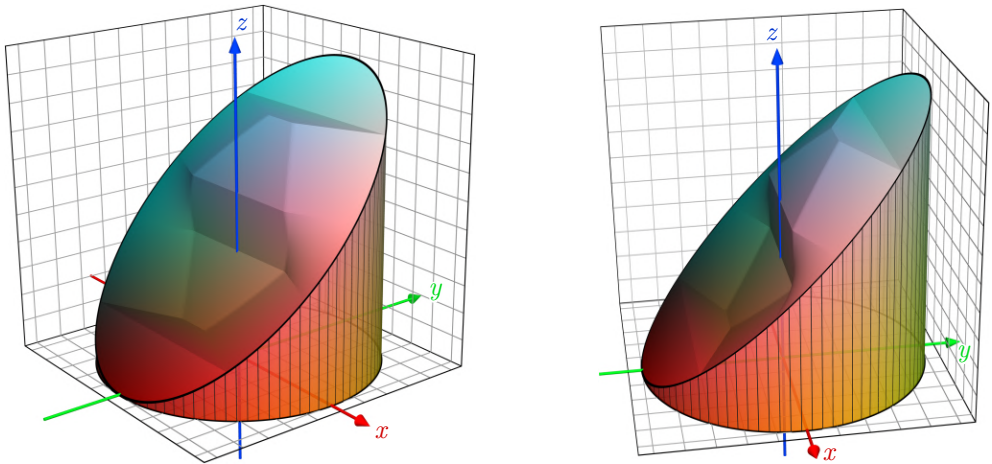}

\captionof{figure}{Graph of the solution $u$ of the Dirichlet problem in 
Example~\ref{exa:lightdiamond}, depicted as a surface $z = u(x, y)$. \label{fig:lightdiamond3D}}
\end{minipage}
\end{exa}
In all the examples above, the solutions are discontinuous inside the domain away from the boundary. 
One might therefore ask whether solutions to the Dirichlet problem are continuous at the boundary. 
The following example shows the set of points of discontinuity can, in fact, reach all the way to the boundary.
\begin{exa}
\label{exa:lightdiamondtight}
Having fixed a constant $\alpha \in (0, 1)$, let the weight be given by
\[
  w(x,y) =
  \begin{cases}
    \alpha + (1-\alpha) (|x|+|y|) & \text{if } |x|+|y| < 1, \\
    1 &  \text{otherwise.}
  \end{cases}
\]
It is not difficult to see that as the weight $w$ decreases as one moves into the disk, this weighted domain
has boundary of positive mean curvature.
Assume $\alpha=1/2$ to simplify the calculations, and fix a discretization scale $n\in \mathbb{N}$. For $1\leq k\leq n$, define
\[A_{k}=\left\{(x,y)\colon (k-1)/n \leq |x|+|y|< k/n\right\}.\] 
We consider the approximating weight
\[
  w_{n}(x,y) =
  \begin{cases}
   \frac{1}{2}(1+\frac{k}{n}) & \text{if }(x,y)\in A_{k} \mbox{ for some } 1\leq k\leq n,\\
    1 &  \text{otherwise.}
  \end{cases}
\]
Suppose $\gamma$ is the boundary of a superlevel set for a solution to the Dirichlet problem with weight $w_{n}$. 
Then $\gamma$ is distance minimizing with respect to the length distance induced by $w_{n}$. Suppose 
$\gamma$ leaves the $x$-axis at $(k_{0}/n,0)$ and intersects the set $\{(x,y):|x|+|y|=1\}$ at a point above the 
$x$-axis. Then the trajectory of $\gamma$ between these points is governed by Snell's law, as discussed in 
Example~\ref{ncaseweight}. Let
\[
w_{0}=\frac{1}{2}\left(1+\frac{k_{0}}{n}\right), \quad w_{k}=\frac{1}{2}\left(1+\frac{k}{n}\right), \quad \theta_{0}=\frac{\pi}{4},
\]
and $\theta_{k}$ be the angle $\gamma$ makes with the line $y=x$ in $A_{k}$. Then, 
$\frac{\sin (\theta_{k})}{\sin (\theta_{0})}=\frac{w_0}{w_k}$ and so 
\begin{equation}\label{sineq}
\sin (\theta_{k})=\frac{1}{\sqrt{2}}\left(\frac{n+k_{0}}{n+k}\right).
\end{equation}
The length of the line $y=x$ inside $A_{k}$ is $1/(n\sqrt{2})$. Hence the length $a_{k}$ of $\gamma$ in 
$A_{k}$ satisfies $a_{k}\cos(\theta_{k})=1/(n\sqrt{2})$. It follows that $h_{k}$, the increase in the $y$-coordinate 
of $\gamma$ in $A_{k}$, is given by
\[h_{k}=a_{k}\sin\left(\frac{\pi}{4}-\theta_{k}\right)=\frac{1-\tan(\theta_{k})}{2n}.\]
Using \eqref{sineq}, this gives
\[h_{k}=\frac{1}{2n}\left(1-\frac{(n+k_{0})}{\sqrt{2(n+k)^{2}-(n+k_{0})^{2}}}\right).\]
Adding these contributions, the total gain in height by $\gamma$ before it leaves the region $|x|+|y|\leq 1$ is
\[
\sum_{k=k_{0}+1}^{n}\frac{1}{2n}\left(1-\frac{(1+\frac{k_{0}}{n})}{\sqrt{2(1+\frac{k}{n})^{2}-(1+\frac{k_{0}}{n})^{2}}}\right).
\]
Given $0<t_{0}<1$, we may choose $k_{0}$ for each $n$ so that $k_{0}/n\to t_{0}$. Then the above sum converges to
\[
H(t_{0})=\frac{1}{2}\int_{t_{0}}^{1} 1 - \frac{1+t_{0}}{\sqrt{2(1+t)^{2}-(1+t_{0})^{2}}} dt.
\]
Since the integrand is strictly positive for $t>t_{0}$, we have $H(t_{0})>0$ whenever $t_{0}<1$. The number 
$H(t_{0})$ gives the $y$-coordinate of the intersection of the set $\{(x,y):|x|+|y|=1\}$ with the curve that is length 
minimizing with respect to $w$ and leaves the $x$-axis at $(t_{0},0)$, moving upwards. Such curves correspond to 
boundaries of superlevel sets of solutions to the Dirichlet problem with respect to $w$, for levels greater than 
$H(t_{0})$. By symmetry, the $y$-coordinate of the intersection point if the curve leaves the $x$-axis at $(t_{0},0)$ 
and goes downwards is $-H(t_{0})$. These correspond to boundaries of superlevel sets of solutions to the same 
Dirichlet problem, for levels less than $-H(t_{0})$.

If we approach $(t_{0},0)$ from above then the values of the solution are greater than $H(t_{0})$, while if we approach 
$(t_{0},0)$ from below the values of the solution are less than $-H(t_{0})$. This implies that the solution is discontinuous 
at $(t_{0},0)$, for any $0<t_{0}<1$. Hence the set of points of discontinuity reach all the way to the boundary.

\noindent%
\begin{minipage}{\linewidth}
\vspace{\topsep}
\center
\includegraphics[width=5.4in,page=9]{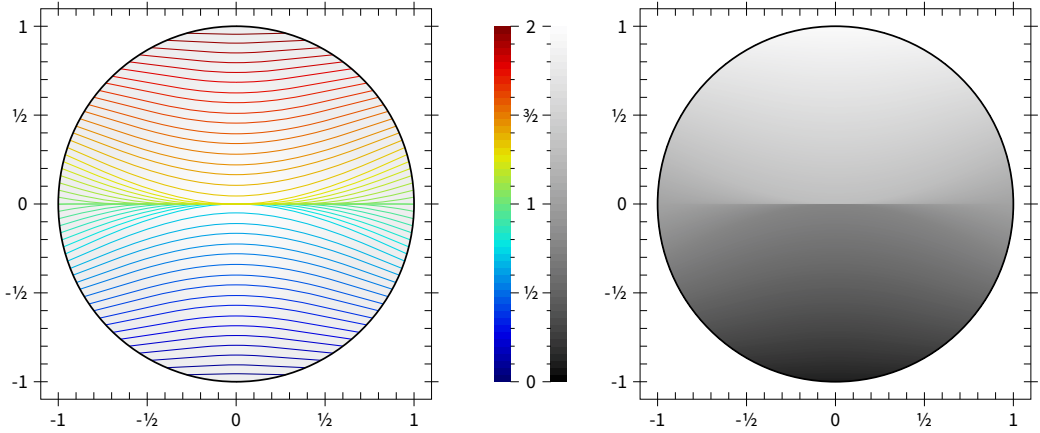}

\captionof{figure}{Level sets $\partial E_t$ for various values of $t \in (0,2)$ in Example~\ref{exa:lightdiamondtight}, with 
$\alpha = 1/2$, are shown on the left. A heightmap of the solution $u$ constructed by stacking the sets $E_t$ is shown 
on the right.
\label{fig:lightdiamondtight}}
\end{minipage}

\noindent%
\begin{minipage}{\linewidth}
\vspace{\topsep}
\center
\includegraphics[width=5.8in,height=69mm,page=10]{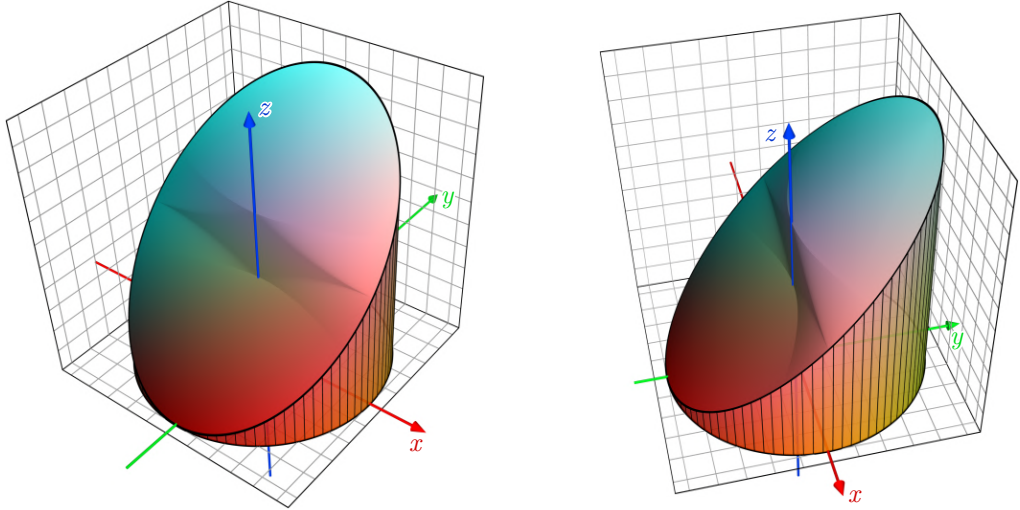}

\captionof{figure}{Graph of the solution $u$ of the Dirichlet problem in Example~\ref{exa:lightdiamondtight}, depicted as a surface $z = u(x, y)$. Observe that the jump-set of the solution lies upon the $x$-axis, i.e., $S_u = \{(x,y)\in \Om: y=0\}$. \label{fig:lightdiamondtight3D}}
\end{minipage}
\vspace{\topsep}
\end{exa}
\subsubsection*{Non-uniqueness of solutions: Third Eye}
The final two examples demonstrate that solutions may fail to be unique. In the first of these two examples,
the space is positively curved in some points inside the domain, and flat at other points in the domain. However, the weight
is not a continuous function. The last example of this paper gives a continuous weight; in this example, the space is 
negatively curved at some points (for example, in $K_{\text{ann}}$), positively curved at some points (for example, in
$K_{\text{in}}$), and flat at some points (for example, in $K_{\text{out}}$).
\begin{exa}
\label{exa:3heavydiamonds}
Having fixed a constant $\alpha \ge \sqrt 2$, let the weight be given by
\[
  w(x,y) =
  \begin{cases}
    \alpha & \text{if } \min\bigl\{\bigl|x-\frac12\bigr|, \bigl|x+\frac12\bigr|\bigr\} 
      +|y| \le \frac14 \quad \text{or}\quad |x|+\bigl|y-\frac14 \bigr| \le \frac18, \\
    1 &  \text{otherwise.}
  \end{cases}
\]
Following an analogous argument as in Example~\ref{exa:heavydiamond}\,(a), we can easily describe the shortest 
paths connecting the points of $\{z\in \dOm\colon f(z)=t\}$ for $t\in(0,2)$:
\begin{itemize}
	\item If $t\le\frac{3}{4}$ or $t\ge\frac{11}{8}$, then $\partial E_t$ is a horizontal line segment.
  \item If $\frac34 < t \le t_0$, where $t_0 \approx 1.017$, then the shortest path is a piecewise affine line that passes 
  through the bottom tips of the large diamonds. The value of $t_0$ is found by equating the length 
  of the piecewise affine line that starts from a point on $\dOm$ whose $y$-value equals $t_0-1$ and passes through 
  the bottom tips of the large diamonds and the length of the piecewise affine 
  line that begins from the same point in $\dOm$ and passes
  through the top tips of all three diamonds.
  \item If $t_0 < t \le t_1$, where $t_1 \approx 1.127$, then the shortest path is a piecewise affine line that 
  begins from a point in $\dOm$ whose $y$-value is $t_1-1$, and passes
  through the top tips of the large diamonds and either the top or the bottom tip of the small diamond in the middle. 
  Such a possibility of choice of the tip for the small diamond is \emph{the cause of non-uniqueness of the solution}. The 
  value of $t_1$ is determined by finding the intersection of $\dOm$ with the ray connecting the top tip of the 
  small diamond in the middle and the top tip of one of the large diamonds.
  \item If $t_1 < t < \frac{11}{8}$, then the shortest path is a piecewise affine line that passes through the top tip of the small 
  diamond in the middle.
\end{itemize}

\noindent%
\begin{minipage}{\linewidth}
\vspace{2\topsep}
\center
\includegraphics[width=5.8in,height=2.4in,page=11]{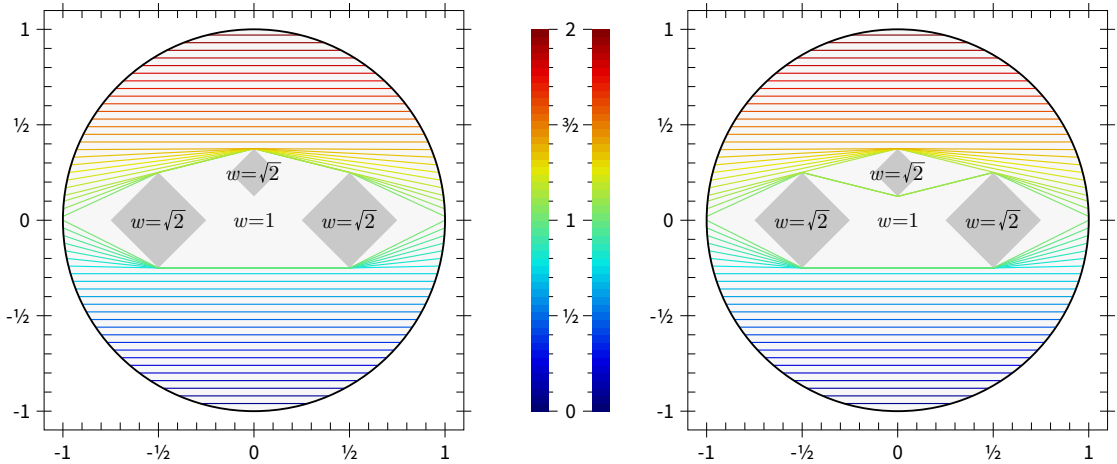}  

\vspace{\topsep}
\captionof{figure}{Level sets $\partial E_t$ for various values of $t \in (0,2)$ in Example~\ref{exa:3heavydiamonds}. 
In the
left figure, the shortest path for $t_0 < t \le t_1$ passes through the upper tip of the small diamond, and
passes through the lower tip in the right figure. Similarly as in Example~\ref{exa:heavydiamond}\,(a), the precise value 
of $\alpha \ge \sqrt2$ plays no role. 
\label{fig:3heavydiamonds}}
\end{minipage}

\noindent%
\begin{minipage}{\linewidth}
\center
\includegraphics[width=5.8in,height=2.4in,page=12]{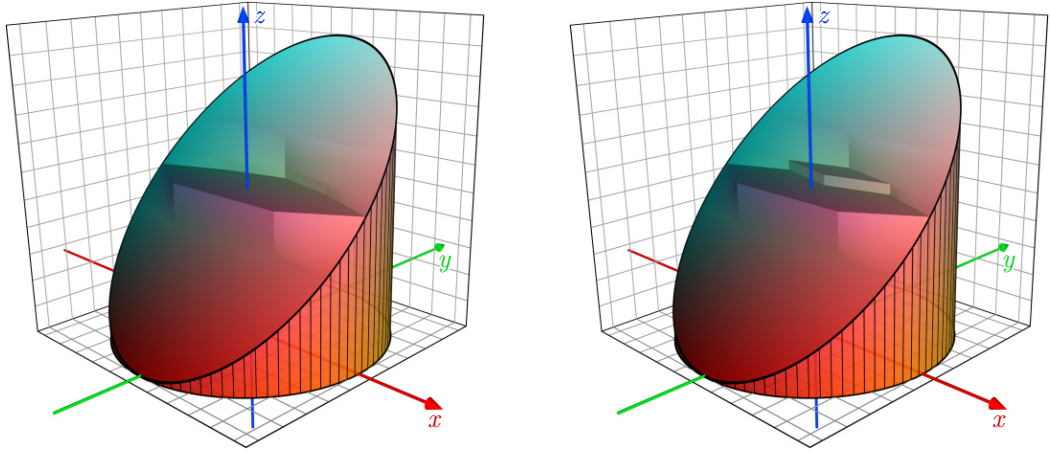}

\captionof{figure}{Graphs of the two distinct solutions in Example~\ref{exa:3heavydiamonds} 
whose level sets have been depicted in Figure~\ref{fig:3heavydiamonds}. 
\label{fig:3heavydiamonds3D}}
\vspace{\topsep}
\end{minipage}
Observe also that any convex combination of the solutions shown in 
Figures~\ref{fig:3heavydiamonds} and~\ref{fig:3heavydiamonds3D} is also a solution to the Dirichlet problem.
\end{exa}
Finally, the following example shows that the non-uniqueness of solutions is in general not caused by discontinuity of the weight function.
\begin{exa}\label{exa:litedmdheavycore}
Let the weight be given by
\[
  w(x,y) =
  \begin{cases}
    0.75 - 0.5(|x|+|y|) & \text{if } 0 \le |x|+|y| < 0.5, \\
    |x|+|y| & \text{if } 0.5 < |x|+|y| \le 1, \\
    1 &  \text{otherwise.}
  \end{cases}
\]
Let the three regions of $\Om$ listed above be denoted by 
$K_{\mathrm{in}}$, $K_{\mathrm{ann}}$, and $K_{\mathrm{out}}$ respectively.
Here again one can see that the boundary has positive mean curvature.

Suppose $t\ge 1$ and $\eta(x)=(x,g(x))$ is a curve that forms the boundary of $E_t$
(travelling from left to right).
In $A:=K_{\text{ann}}\cap \{x<0\}\cap \{y>0\}$, by Ibn Sahl--Snell law,
we know that $w\, \sin(\theta)=\kappa$ for some constant $\kappa$, where $\theta$ is the oriented angle
between the normal to the iso-line for the weight $w$ and the tangent vector to the curve $\eta$. Observe that 
this normal has slope $-1$. The value of
$\kappa$ might change from curve to curve, but for a given curve it is constant.
Since $\eta$ moves from left to right, it follows that
$\theta\in (-\pi/4,3\pi/4)$, considering that 
$\theta(\eta(x)) = \arctan(g'(x)) + \pi/4$. In particular, $\theta(\eta(x))$ is monotone increasing (resp. decreasing) if
and only if $g^\prime(x)$ is monotone increasing (resp. decreasing).
Moreover, the function $x\mapsto \theta(\eta(x))$ is continuous in the quadrant $\{x<0<y\} \cap \Om$ 
and smooth inside each of the regions $K_{\text{in}}$, $K_{\text{ann}}$, and $K_{\text{out}}$ within the quadrant. 

 Let us now discuss convexity/concavity of $g$ using the Ibn Sahl--Snell law, based on the value of $g'(x_0)$ at 
some point $\eta(x_0) \in A$. Since $\sin(\theta)= \kappa/w$, we can determine monotonicity of the function 
$x \mapsto \theta(\eta(x))$ and hence of $g'(x)$ in a neighborhood of $\eta(x_0)$ based on the monotonicity 
of $x \mapsto w(\eta(x))$ at $x_0$ and the sign of $\kappa$. Note however that one needs to pay special attention 
to and distinguish cases when $\theta = 0$ (since this is the borderline value, where the sign of $\kappa$ changes), and 
$\theta < \pi/2$ as opposed to $\theta > \pi/2$ (since the monotonicity of $x \mapsto w(\eta(x))$ is different in these two 
cases and so is the relation between monotonicity of $\sin(\theta)$ and $\theta$).

\begin{center}
\begin{tabular}{|c|c||c||c|c|c|c|}
\hline
\multicolumn{2}{|c||}{Value of} &  Sign of both & \multicolumn{4}{c|}{Monotonicity of}\\[1pt]
$g'(x_0)$ &  $\theta(\eta(x_0))$ & $\kappa$ and $\sin(\theta)$ & $w$ & $\sin(\theta)$ & $\theta$ & $g'$ \\[1pt]
\hline\hline
$(-\infty, -1)$ & $(-\pi/4, 0)$ & $-$ & decr. & decr. & decr. & decr. \\[1pt]
$-1$ & $0$ & $0$ & decr. & const. & const. & const. \\[1pt]
$(-1, 1)$ & $(0, \pi/2)$ & + & decr. & incr. & incr. & incr. \\[1pt]
$1$ & $\pi/2$ & + & const. & const. & const. & const \\[1pt]
$(1, \infty)$ & $(\pi/2, 3\pi/4)$ & + & incr. & decr. &  incr. & incr. \\[1pt]
\hline
\end{tabular}
\end{center}
From the table above, we can deduce that if $g^\prime(x_0) < -1$ at some point $x_0$ in $A$, then $g$ is 
concave within the entire region $A$. If $g^\prime(x_0) = -1$ at some point, then $g^\prime\equiv -1$ in $A$. 
Finally, if $g^\prime(x_0) > -1$ at some point in $A$, then $g$ is convex in all of $A$. 
The argument below showing that $\eta$ should intersect the $y$-axis horizontally also tells us that
the possibility $g^\prime(x)\le -1$ and the possiblity $g^\prime(x)\ge 1$ will not happen.

Analogous arguments can be made in $K_{\text{in}}$, leading us to conclude that
$\eta$ is either convex in $K_{\text{in}}$, or is concave in $K_{\text{in}}$. 
Similar arguments for regions in the other three quadrants yield analogous conclusions.
From this we deduce that $\eta$ has one-sided tangents where it intersects the $y$-axis. This will be
used to show below that $\eta$ will intersect the $y$-axis horizontally.

We now check that $\eta=\partial E_s$ intersects the $y$-axis horizontally 
for all $s\in (0, 2)$. Note that one-sided directional derivatives on either side of the $y$-axis 
exist, which can be seen from  concavity/convexity of the curve from the argument above.
Hence it suffices (up to small error terms) to compare weighted lengths of straight lines close to the $y$-axis. 
We compute the weighted length of the straight line path joining $(-\varepsilon, b)$ to the $y$-axis for some 
$-1<b<1$ and sufficiently small $\varepsilon >0$.
Actually, the following calculation discusses only the case when $0<b<0.5$, i.e., when 
$\partial E_s$ crosses the $y$-axis in $K_{\mathrm{in}}$ above the $x$-axis. Analogous computations 
can be done for other values of $b \in (-1, 1)$ when $\partial E_s$ crosses the $y$-axis somewhere
 in $K_{\mathrm{ann}}$, or in $K_{\mathrm{in}}$ below the $x$-axis. Consider such a 
 path $\varphi$ making an 
angle $\vartheta$, $-\pi/2<\vartheta<\pi/2$, with the horizontal. This takes the form
\[
\varphi(t)=(-\varepsilon+t\cos \vartheta, b+t\sin \vartheta), \qquad 0\leq t \leq \varepsilon/(\cos \vartheta).
\]
Clearly $w(\varphi(t))=0.75-0.5(\varepsilon-t\cos\vartheta + b + t\sin\vartheta)$ 
and $\|\varphi'(t)\|=1$ for all $t$. An easy calculation yields that the weighted length $I$ is given by
\[
4I=\frac{(3-2\varepsilon-2b)\varepsilon}{\cos \vartheta} + \frac{(\cos\vartheta - \sin\vartheta)\varepsilon^2}{\cos^{2}\vartheta}.
\]
From which we obtain
\[
4(\cos^{2}\vartheta)\frac{dI}{d\vartheta}=(3-2\varepsilon-2b)\varepsilon\sin\vartheta 
+ 2\varepsilon^{2}(\cos\vartheta - \sin\vartheta)\tan\vartheta-\varepsilon^{2}(\cos\vartheta+\sin\vartheta).
\]
For small $\varepsilon$ (compared to $\vartheta$) we see
\[
4(\cos^{2}\vartheta)\frac{dI}{d\vartheta} \approx (3-2b)\varepsilon \sin \vartheta
\]
which is negative for $\vartheta<0$ and positive for $\vartheta>0$. Hence one obtains a shorter 
length by making $\vartheta$ close to $0$. Making $\varepsilon$ smaller and smaller, we see that the weighted length
minimizing
curve $\eta$ will be horizontal where it intersects the $y$-axis.

Let us now verify that $\gamma = \partial E_1$ does not entirely coincide with the $x$-axis. 
We do so by comparing the weighted length of the line segment joining $(-0.5,0)$ to $(0.5,0)$
with the length of the curve that is the concatenation of the line segment joining $(-0.5,0)$ to
$(0,0.2)$ and the line segment joining $(0,0.2)$ to $(0.5,0)$. A direct computation shows that the first
path has length $5/8$, while the second path has length $0.575 \times \sqrt{1.16}$.
Thus, the second path is shorter (in the weighted sense) than the first path.
Since the first path forms part of the curve that coincides with the $x$-axis, the claim follows.

Suppose now that the superlevel set $E_1$ of $u$
is the minimal weak solution set for the boundary data $\chi_{F_1}$. 
Then $\gamma=\partial E_1$ intersects the $y$-axis at a point $(0, H)$ for some $0<H<0.5$,  and 
from the discussion above, we know
that it does so horizontally. 
Clearly, $0<H<0.5$ implies that $0.5<w(0,H)<0.75$.

We now claim that in the region $x>0$, $\gamma=\partial E_1$
first intersects the $x$-axis at a point $(a,0)$ with $0<a\le w(0,H)$. 
If not, then $\gamma$ will intersect the region
\[
R= \{(x,y)\in \Omega \colon x,y>0, x+y>w(0,H) \}.
\]
Clearly, $R\subset K_{\mathrm{ann}}\cup K_{\mathrm{out}}$. In $R\cap K_{\mathrm{ann}}$, we have $w(x,y)=x+y>w(0,H)$. 
Hence, there will be a point $(x_0,y_0)$ in the boundary of $R$ where $\gamma$ crosses into $R$, and at this
point $w(x_0,y_0)=x_0+y_0=w(0,H)$. Since $\gamma$ has slope zero at the point $(0,H)$ where the weight was $w(0,H)$,
it follows that $\gamma$ has slope zero at $(x_0,y_0)$ as well.
Due to convexity of $\gamma$ in $K_{\text{ann}}$, the slope of $\gamma$ would necessarily be strictly positive inside 
the region $R\cap K_{\mathrm{ann}}$. 
We also know that $\gamma$ will be a straight line segment in $R\cap K_{\mathrm{out}}$. It follows that $\gamma$ would never 
reach the point $(1,0)$, contradicting the definition of $\gamma$. This gives the claim. Hence $\gamma$ 
passes through a 
point $(a, 0)$ for some $0<a\le w(0,H)$ and then continues horizontally towards $(1,0)$.

Now suppose $t>1$ is such that $\eta=\partial E_{t}$ intersects the $y$-axis at a point $(0,\tilde{H})$ with 
$H < \tilde{H} < 0.5$. It follows 
from minimality of $E_{1}$, symmetry of the setting,
and the fact that $\eta$ cannot intersect $\gamma$ at the $y$-axis that $\eta$
is disjoint from $\gamma$. Hence, $\eta$ will never meet the $x$-axis, so its trajectory, outside the $y$-axis, is 
completely determined by Snell's law. Since $\eta$ is horizontal at $(0,\tilde{H})$, it 
follows from Snell's law that $\eta$ will be horizontal at a point of $K_{\mathrm{ann}}$ where the weight agrees with 
$w(0,\tilde{H})$, hence at a point $(x,y)$ with $0<x<0.75$ and $y>0$. If $\tilde{H}$ were greater 
than $0.5$, then $\eta$ would be horizontal exactly at one point, viz.\@ $(0, \tilde{H})$ on the $y$-axis. Using the arguments of 
Example~\ref{exa:lightdiamondtight}, $\eta$ intersects $\partial \Omega$ at a point whose height 
is bounded away from $0$. Consequently, $t>t_{0}$ for some $t_{0}>1$.

Now suppose $\eta=\partial E_{t}$ for some $1<t<t_{0}$. Then $\eta$ must intersect the $y$-axis at the point $(0,H)$. The 
trajectory of $\eta$ is determined by Snell's law until it meets the $x$-axis. Thus, $\eta=\partial E_{t}$ and 
$\gamma=\partial E_{1}$ coincide until they meet the $x$-axis. The curve $\eta$ then follows the $x$-axis for 
some time before leaving it and intersecting $\partial \Omega$ at a point with height $t-1$. This final trajectory is 
calculated as in Example~\ref{exa:lightdiamondtight}.

Thus, whether $E_1$ is a minimal solution set or not,
$\eta=\partial E_t$ has the following form for $t\in (2-t_0,t_0)$:
$\eta$ begins at the point in 
$\partial \Omega$ with $y$-coordinate $t-1$, then intersects the $x$-axis which it follows for some time. From there, 
$\eta$ can move either up (if $E_t$ is the minimal weak solution set) or down (if $E_t$ is the maximal weak solution set), intersecting 
the $y$-axis in a point of the form $(0,\pm H)$. 
To obtain $\eta$ in the region $x\leq 0$, simply reflect through the $y$-axis.

Note that if $E_{t}$ is minimal ($\partial E_{t}$ goes up) then $E_{s}$ must also be minimal for all $t_{0}\geq s\geq t$. 
Choosing $\partial E_{t_a}$ stay in the upper half-plane for some $t_a\in [2-t_0,t_0]$ 
yields a solution whose superlevel sets $E_t$ are minimal for  $t\in (t_a, 2)$. Choosing $\partial E_{t_b}$ to 
move into the lower half-plane
for some $t_b\in [2-t_0,t_0]$ leads to another solution, 
whose superlevel sets $E_t$ are maximal for all $t\in (0,t_b)$. Contour curves of two distinct such solutions,
the first corresponding to $t_a=2-t_0$, and the second corresponding to the choice of $t_b=t_0$, are
shown below.
A more careful analysis using the equation $\sin(\arctan(g^\prime(x))+\pi/4)=\kappa/w(x,g(x))$ gave us the following picture of the two solutions.
The left figure in the two pictures is of the contour curves of the solutions, while the right figure
is the height-map of the solutions. In the first picture, the solution takes on the value $2-t_0$ in the middle lenticular region,
while in the second picture, the solution takes on the value $t_0$ there.

\noindent%
\begin{minipage}{\linewidth}
\vspace{\topsep}
\center
\includegraphics[width=5.8in,height=2.4in,page=13]{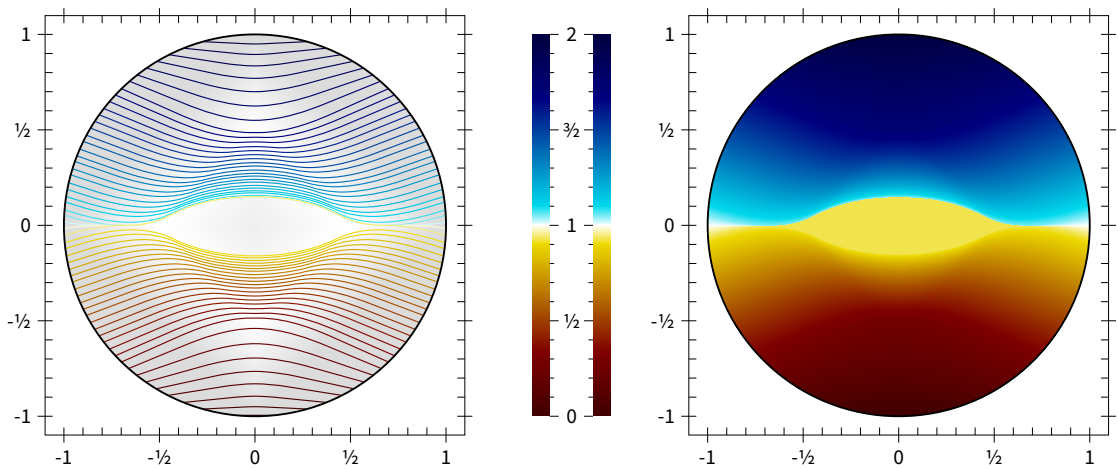}

\captionof{figure}{{\small Level sets $\partial E_t$ for various values of $t \in (0,2)$ in Example~\ref{exa:litedmdheavycore}, when 
the solution is constructed using minimal weak solution sets as the ``superlevel pancakes''. A heightmap of the solution 
$u$ is shown on the right; the color at each point represents the solution's function value at that 
point.} \label{fig:litedmdheavycoreA}}
\end{minipage}

\vskip .5cm

\noindent%
\begin{minipage}{\linewidth}
\vspace{\topsep}
\center
\includegraphics[width=5.8in,height=2.4in,page=14]{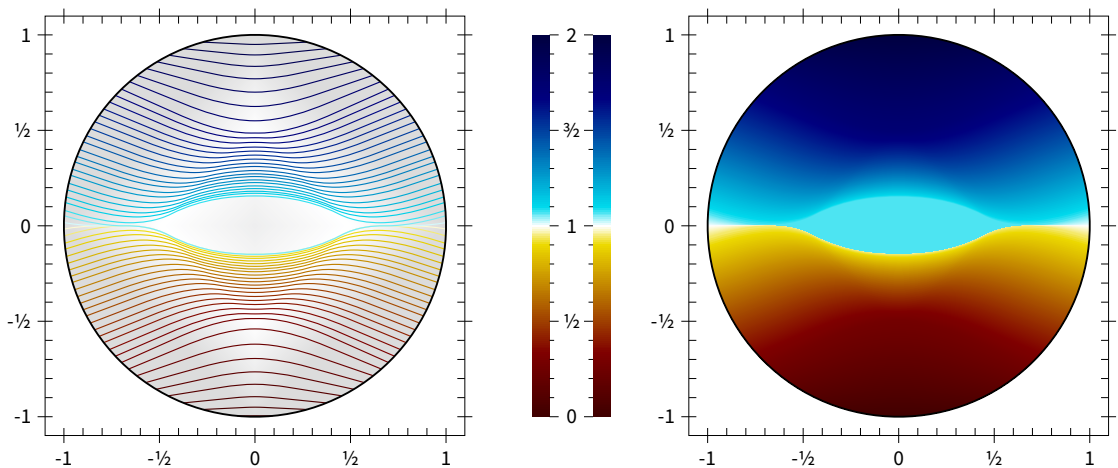}

\captionof{figure}{{\small Level sets and a heightmap of the solution in Example~\ref{exa:litedmdheavycore} that 
is constructed using maximal weak solution sets as the ``superlevel pancakes''. Note that 
$u(x,y) = 2 - \tilde{u}(x,-y)$, where $\tilde{u}$ is the solution shown 
in Figure~\ref{fig:litedmdheavycoreA}.} \label{fig:litedmdheavycoreB}}
\end{minipage}
\end{exa}

\bigskip
{\footnotesize
\noindent P.\,L., L.\,M., N.\,S., G.\,S.: Department of Mathematical Sciences, P.O. Box 210025, University of Cincinnati, Cincinnati,~OH~45221-0025, U.S.A.}

{\footnotesize
\noindent E-mail: {\tt panu.lahti@aalto.fi}, {\tt lukas.maly@uc.edu}, {\tt shanmun@uc.edu}, {\tt gareth.speight@uc.edu}
}

\begin{thebibliography}{AA}
\frenchspacing
%
\bibitem{A} L. Ambrosio: \emph{Fine properties of sets of finite perimeter in doubling metric measure spaces.}  Set-Valued Anal. \textbf{10} (2002), no. 2-3, 111--128.

\bibitem{AdM} L. Ambrosio and S. Di Marino: \emph{Equivalent definitions of BV space and of total variation on metric measure spaces.}  J. Funct. Anal. \textbf{266} (2014),
no. 7, 4150--4188.

\bibitem{AMP} L. Ambrosio, M. Miranda Jr., and D. Pallara:
\emph{Special functions of bounded variation in doubling metric measure spaces.}
Calculus of variations: topics from the mathematical heritage of E. De Giorgi, 1--45,
Quad. Mat. \textbf{14} Dept. Math., Seconda Univ. Napoli, Caserta, 2004.

\bibitem{AG} G. Anzellotti and M. Giaquinta:
\emph{BV functions and traces.}
Rend. Sem. Mat. Univ. Padova \textbf{60} (1978), 1--21.

\bibitem{BM} E. Barozzi and U. Massari:
\emph{Regularity of minimal boundaries with obstacles.}
Rend. Sem. Mat. Univ. Padova \textbf{66} (1982), 129--135.

\bibitem{BB}A. Bj\"orn and J. Bj\"orn,
\emph{Nonlinear potential theory on metric spaces},
EMS Tracts in Mathematics, 17. European Mathematical Society (EMS), Z\"urich, 2011. xii+403 pp. 

\bibitem{BGG} E. Bombieri, E. De Giorgi, and E. Giusti: \emph{Minimal cones and the Bernstein problem.}
Invent. Math. \textbf{7} (1969), 243--268.

\bibitem{BDaM} G. Bouchitt\'e and E. Dal Maso:
\emph{Integral representation and relaxation of convex local functionals on
$\BV(\Omega)$.} Ann. Scuola Norm. Sup. Pisa Cl. Sci. (4)
\textbf{20} (1993) no. 4, 483--533.

\bibitem{Cam} C. Camfield: \emph{Comparison of BV norms in weighted euclidean spaces and metric measure spaces} Ph.D. thesis, University of Cincinnati, 2008. Available at http://rave.ohiolink.edu/etdc/view?acc\_num=ucin1211551579.

\bibitem{DH} G. Dziuk and J.\,E. Hutchinson: \emph{The discrete Plateau problem: algorithm and numerics.} Math. Comp. \textbf{68}, no. 225, (1999), 1--23.

\bibitem{GS} M. Giaquinta and J. Sou\v{c}ek:
\emph{Esistenza per il problema dell'area e controesempio di Bernstein.}
Boll. Un. Mat. Ital. (4) \textbf{9} (1974), 807--817.

\bibitem{Gi} E. Giusti: \emph{Boundary value problems for non-parametric surfaces of prescribed mean curvature.}
Ann. Scuola Norm. Sup. Pisa Cl. Sci. (4) \textbf{3} (1976), no. 3, 501--548.

\bibitem{Gi2} E. Giusti: \emph{Minimal surfaces and functions of bounded variation.}
Monographs in Mathematics, \textbf{80}, Birkh\"auser Verlag, Basel, 1984. xii+240 pp.

\bibitem{HaKo} P. Haj\l asz and P. Koskela:
\emph{Sobolev met Poincar\'e.}
Memoirs AMS \textbf{145}, No. 688 (2000) ix+101.

\bibitem{HKLS} H. Hakkarainen, R. Korte, P. Lahti, and N. Shanmugalingam:
\emph{Stability and continuity of functions of least gradient.}
Anal. Geom. Metr. Spaces \textbf{3}, no. 1, (2015), 123--139.

\bibitem{Hei01}J. Heinonen, \emph{Lectures on analysis on metric spaces},
Universitext. Springer-Verlag, New York, 2001. x+141 pp.

\bibitem{HKST}J. Heinonen, P. Koskela, N. Shanmugalingam, and J. Tyson,
\emph{Sobolev spaces on metric measure spaces: an
approach based on upper gradients}, New Mathematical Monographs {\bf 27},
Cambridge University Press, 2015. xi+448 pp.

\bibitem{KKLS} J. Kinnunen, R. Korte, A. Lorent, and N. Shanmugalingam:
\emph{Regularity of sets with quasiminimal boundary surfaces in metric spaces.}
J. Geom. Anal. \textbf{23} (2013), 1607--1640.

\bibitem{KLLS} R. Korte, P. Lahti, X. Li, and N. Shanmugalingam:
\emph{Notions of Dirichlet problem for functions of least gradient in metric measure spaces.}, preprint {\tt https://arxiv.org/pdf/1612.06078.pdf}

\bibitem{LS} P. Lahti and N. Shanmugalingam:
\emph{Trace theorems for functions of bounded variation in metric spaces.}
preprint, {\tt http://cvgmt.sns.it/paper/2746/}

\bibitem{MSS} L. Mal\'y, N. Shanmugalingam, M. Snipes:
\emph{Trace and extension theorems for functions of bounded variation.}
To appear in Ann. Sc. Norm. Super. Pisa Cl. Sci. arXiv:1511.04503

\bibitem{MRS} J.\,M. Maz\'on Ruiz, J.\,D. Rossi, and S.\,S. de Le\'on:
\emph{Functions of least gradient and 1-harmonic functions.}
Indiana Univ. Math. J. \textbf{63} (2014), 1067--1084.

\bibitem{Mr} M. Miranda, Jr.:
\emph{Functions of bounded variation on ``good'' metric spaces.}
J. Math. Pures Appl. (9) \textbf{82} (2003), no. 8, 975--1004.

\bibitem{PSTV} A. Pinamonti, F. Serra Cassano, G. Treu, and D. Vittone:
\emph{BV minimizers of the area functional in the Heisenberg group under the bounded slope condition.}
Ann. Sc. Norm. Super. Pisa Cl. Sci. (5) \textbf{14} (2015), no. 3, 907--935.

\bibitem{Ras} R. Rashed:
\emph{A Pioneer in Anaclastics: Ibn Sahl on Burning Mirrors and Lenses.}
Isis \textbf{81}, no. 3 (Sep., 1990): 464--491.\\
{\tt http://www.journals.uchicago.edu/doi/pdfplus/10.1086/355456}

\bibitem{ST}G.\,S. Spradlin and A. Tamasan:
\emph{Not all traces on the circle come from functions of least gradient in the disk.}
Indiana Univ. Math. J. \textbf{63} (2014), no. 6, 1819--1837.

\bibitem{SWZ} P. Sternberg, G. Williams, and W.\,P. Ziemer:
\emph{Existence, uniqueness, and regularity for functions of least gradient.}
J. Reine Angew. Math. \textbf{430} (1992), 35--60.

\end{thebibliography}
\end{document}